\numberwithin{equation}{section}
\newtheorem{theorem}[equation]{Theorem}
\newtheorem{proposition}[equation]{Proposition}
\newtheorem{defn}[equation]{defn}
\newtheorem{rem}[equation]{rem}
\newtheorem{lemma}[equation]{Lemma}
\newtheorem{corollary}[equation]{Corollary}
\begin{document}

\title{Complementary asymptotically sharp estimates for eigenvalue means of Laplacians}


\author{Evans M. Harrell II \footnote{School of Mathematics, Georgia Institute of Technology, Atlanta GA 30332-0160, USA} , Luigi Provenzano \footnote{Universit\`a degli Studi di Padova, Dipartimento di Matematica, Via Trieste 63, 35121 Padova, Italy} and Joachim Stubbe \footnote{EPFL, SB Institute of Mathematics, Station 8, CH-1015 Lausanne, Switzerland}}





\maketitle

\noindent
{\bf Abstract:}
We present asymptotically sharp inequalities, containing a second term, for the Dirichlet and Neumann eigenvalues of the Laplacian on a domain, which are complementary to the familiar Berezin-Li-Yau and Kr\"oger inequalities in the limit as the eigenvalues tend to infinity.

We accomplish this in the framework of the Riesz mean $R_1(z)$ of the eigenvalues by applying the averaged variational principle with families of test functions that have been corrected for boundary behaviour.
\vspace{11pt}

\noindent
{\bf Keywords:}  Dirichlet Laplacian, Neumann Laplacian, Semiclassical bounds for eigenvalues, Averaged variational principle, Tubular neighbourhood, Distance to the boundary.

\vspace{6pt}
\noindent
{\bf 2010 Mathematics Subject Classification:} 35P15, 35P20, 47A75, 58J50.




\section{Introduction}

According to a conjecture made by P\'olya in 1961, the Weyl estimate of large eigenvalues should be a strict lower bound for 
each of the Dirichlet eigenvalues of a domain, and an upper bound for the Neumann eigenvalues.  P\'olya's still open conjecture has inspired legions of articles,
notably those in the tradition of Berezin \cite{Berezin} and of Li and Yau \cite{LiYau}, who proved an averaged version of the conjecture in the Dirichlet case:
\begin{equation*}
\frac{1}{k}\sum_{j=1}^k\lambda_j\geq\frac{d}{d+2}C_d\left(\frac{k}{|\Omega|}\right)^{\frac{2}{d}},
\end{equation*}
and of Kr\"oger  \cite{Kro}, who did the same for the Neumann case  (see also \cite{Lap1997}):
\begin{equation}\label{kroger_0}
\frac{1}{k}\sum_{j=1}^k\mu_j\leq\frac{d}{d+2}C_d\left(\frac{k}{|\Omega|}\right)^{\frac{2}{d}}.
\end{equation} 
Here
\begin{equation}\label{Cd}
C_d:=4\pi^2\omega_d^{-\frac{2}{d}}
\end{equation}
and  $\omega_d:=\Gamma(1+d/2)^{-1}\pi^{d/2}$ is the volume of the unit ball in $\mathbb R^d$. The so-called ``classical constant'' $C_d$ is related to the Weyl asymptotic relation satisfied by  Neumann and Dirichlet eigenvalues, namely
$$
\lim_{j\rightarrow+\infty}\frac{\lambda_j}{j^{\frac{2}{d}}}=\lim_{j\rightarrow+\infty}\frac{\mu_j}{j^{\frac{2}{d}}}=\frac{C_d}{|\Omega|^{\frac{2}{d}}},
$$
which shows that the Berezin-Li-Yau and Kr\"oger bounds are asymptotically sharp.

A paper by Melas \cite{Mel} opened the way to improving the Berezin-Li-Yau inequality with a
lower-order correction by incorporating more information about eigenfunctions, and a correction with the expected order 
was later obtained by Weidl \cite{wei0} (see also Geisinger, Laptev, and Weidl \cite{GeLaWe}).  More recently, an
article by two of us \cite{harrell_stubbe_18} improved Weyl-sharp inequalities for eigenvalues of the Laplacian on domains, with both Dirichlet and 
Neumann boundary conditions.
In \cite{harrell_stubbe_18} the Berezin-Li-Yau and Kr\"oger inequalities were replaced by two-term expressions of the right orders, with tight constants.\\
The two-term expressions in \cite{harrell_stubbe_18} were derived in the framework of Riesz means, which have come to be recognized as an efficient way to understand Weyl asymptotics.  By definition,
$R_\sigma(z) :=
\sum_{j}(z-\lambda_{j})_{+}^\sigma$, or respectively $\sum_{j}(z-\mu_{j})_{+}^\sigma$.
(Here $x_{+}$ denotes the positive part of $x$.)
Note that in the way that Riesz means are defined, Berezin-Li-Yau becomes an {\em upper} bound for an expression involving the Dirichlet eigenvalues, and Kr\"oger a {\em lower} bound with the Neumann eigenvalues.  
It is striking that the proofs of the Berezin-Li-Yau and Kr\"oger inequalities use similar ingredients, including the Fourier transform, 
but that they are arranged in different ways.

In this work we show that in some circumstances the situation can be reversed, so that there is a kind of Berezin-Li-Yau {\em upper} bound for the {\em Neumann} Riesz means (i.e., a lower bound for eigenvalue averages) and a kind of Kr\"oger {\em lower} bound for {\em Dirichlet} Riesz means (i.e., an upper bound for eigenvalue averages).  Reversing the inequalities certainly requires lower-order correction terms, which, as will be seen, include information about the boundary of the domain.  As in \cite{harrell_stubbe_18} an essential tool will be the averaged variational principle first introduced in \cite{HaSt14} (see also \cite{EHIS16}), which gives an efficient derivation of Kr\"oger's inequality and has been used to
derive various other upper bounds for averages of eigenvalues. The averaged variational principle applies most directly to $R_1$, which is easily connected to averages of eigenvalues via the Legendre transform:
\[
{\mathcal L}\left[R_1\right](w) = \left(w -  \left[w\right]\right) \lambda_{\left[w\right]+1}
+  \sum_{j=1}^{\left[w\right]}\lambda_j.
\]
We observe that similar results can be found in \cite{Kro2}, where, however, bounds are given directly for sums under quite technical assumptions on the domain, and the constants are implicit and don't show an immediate dependence on the geometry of the domain and of its boundary. We also refer to \cite{stri} where Kr\"oger's results are extended to the case of homogeneous spaces.

The averaged variational principle \cite{HaSt14,EHIS16} applies to any self-adjoint operator $H$
on a Hilbert space $(\mathcal{H},\langle,\rangle_{\mathcal{H}})$
having purely discrete spectrum consisting of eigenvalues $0\leq \lambda_1\leq \lambda_2\leq\ldots\leq\lambda_j\leq\ldots$.  We recall it here in a form adapted to our purposes.
Let the eigenvectors corresponding to $\lambda_j$ be written as $u_j$, let
$Q_H(f,f)$ be the quadratic form associated with $H$, defined on the form domain $\mathcal{Q}_H$, and let
$\mathcal{D}_H$ be the operator domain of $H$.
Note that $Q_H(f,f)=\langle Hf,f\rangle_{\mathcal H}$ whenever $f\in \mathcal{D}_H$.
Let $P_k$ be the spectral projector associated with the first $k$ eigenvalues. Then for any $f\in \mathcal{Q}_H$, according to the standard variational principle,

\begin{equation}\label{var-principle}
 \lambda_{k+1}\big(\langle f,f\rangle_{\mathcal{H}}-\langle P_kf,P_kf\rangle_{\mathcal{H}}\big)\leq Q_H(f,f)-Q_H(P_kf,P_kf).
\end{equation}

Suppose now that a family of functions $f_\xi \in \mathcal{Q}_H$ is indexed by
$\xi \in M$ in a measure space
$(M,\Sigma,\mu)$, such that the {\em tight-frame} condition holds, that is, for any $\phi \in \mathcal{H}$, 
\[
\int_M|\left\langle\phi, f_\xi\right\rangle_{\mathcal H}|^2 d\mu(\xi)= C \|\phi\|_{\mathcal H}^2.
\]
Let $\mu_0$ be another measure on $(M,\Sigma,\mu)$ such that $\displaystyle \int_{M}g(\xi)\; d\mu_0\leq \int_{M}g(\xi)\; d\mu$ for any nonnegative real-valued function $g$ on $M$. 
Then after integrating \eqref{var-principle} over $\xi\in M$ with respect to $\mu_0$ and reorganizing the terms we obtain
\begin{equation*}
\sum_{j=1}^k(\lambda_{k+1}-\lambda_j)\int_M|\langle f_{\xi},u_j\rangle_{\mathcal H}|^2d\mu_0(\xi)\geq\int_M\left(\lambda_{k+1}\|f_\xi\|^2 - Q_H(f_\xi,f_\xi)\right) d\mu_0(\xi),
\end{equation*}
which implies that
\begin{equation}\label{averaged-var-principle-Rieszmean2_0}
\sum_{j=1}^k(\lambda_{k+1}-\lambda_j)\int_M|\langle f_{\xi},u_j\rangle_{\mathcal H}|^2d\mu(\xi)\geq\int_M\left(\lambda_{k+1}\|f_\xi\|^2 - Q_H(f_\xi,f_\xi)\right) d\mu_0(\xi)
\end{equation}
since the terms on left side are nonnegative.

Since we may replace $\lambda_{k+1}$  in \eqref{averaged-var-principle-Rieszmean2_0}
with any $z\in[\lambda_k,\lambda_{k+1}]$ and we can choose $d\mu_0(\xi)=1_{\left\{z\|f_\xi\|^2 - Q_H(f_\xi,f_\xi)\geq 0\right\}}d\mu(\xi)$ ($1_A$ denoting the characteristic function of $A\subset M$), we have
\begin{equation}\label{averaged-var-principle-Rieszmean2}
R_1(z) = \sum_{j}(z - \lambda_j)_+\ge \frac 1 C \int_M\left(z\|f_\xi\|_{\mathcal H}^2 - Q_H(f_\xi,f_\xi)\right)_+ d\mu(\xi).
\end{equation}
Inequalities \eqref{averaged-var-principle-Rieszmean2_0} and \eqref{averaged-var-principle-Rieszmean2} are the forms of the averaged variational principle which we
shall exploit throughout this paper.

When we consider unbounded operators, typically $M$ and $f_{\xi}$ are such that
\begin{equation*}
    \int_{M}Q_H(P_kf_{\xi},P_kf_{\xi})\,d\mu(\xi)=\int_{M}\sum_{j=1}^k\lambda_j|\langle f_{\xi},u_j\rangle_{\mathcal{H}}|^2\,d\mu(\xi)
\end{equation*}
and
\begin{equation*}
 \int_{M}\langle P_kf_{\xi},P_kf_{\xi}\rangle_{\mathcal{H}}\,d\mu(\xi)=\int_{M}\sum_{j=1}^k|\langle f_{\xi},u_j\rangle_{\mathcal{H}}|^2\,d\mu(\xi)
\end{equation*}
are easily computable and finite, but
\begin{equation*}
    \int_{M}Q_H(f_{\xi},f_{\xi})\,d\mu(\xi)=+\infty, \quad \int_{M}\langle f_{\xi},f_{\xi}\rangle_{\mathcal{H}}\,d\mu(\xi) = +\infty,
\end{equation*}
so therefore we need to choose an appropriate $\mu_0$ in \eqref{averaged-var-principle-Rieszmean2_0}.

We also remark here that the variational principle can be seen as a {\it trace inequality}. In fact, let $H$ be as before and $P$ be an orthogonal projector commuting with $H$ such that $(1-P)(H-z)(1-P)\geq 0$ for some $z\in\mathbb{R}$. Then (by the standard variational principle) for any $f\in \mathcal{D}_H$:
\begin{equation*}
  \langle f,P(H-z)P f\rangle \leq \langle f,(H-z) f\rangle.
\end{equation*}
Let $Q$ be another linear operator. By hypothesis
\begin{equation*}
  \text{tr}( Q(1-P)(H-z)(1-P)Q)\geq 0,
\end{equation*}
which is trivially equivalent to
\begin{equation*}
  \text{tr}( QP(H-z)PQ)\leq \text{tr}( Q(H-z)Q).
\end{equation*}
provided that all the traces above are finite, which is true 
in particular when $Q$ has finite-dimensional range). We may choose $Q$ to be an orthogonal projector, in particular the one-dimensional projector onto $f$, that is $\displaystyle Qu=\frac{\langle f,u\rangle f }{\langle f,f\rangle}$ for $f\neq 0$. Obviously by this we recover the usual variational inequality for $f$.

As already mentioned, reversing the Berezin-Li-Yau and Kr\"oger inequalities requires lower-order correction terms, which we want to compare with the semiclassical behaviour of the eigenvalues.
With this in mind we recall that if $\Omega$ is a bounded domain with a sufficiently smooth boundary (for example, if $\Omega$ is Lipschitz in the case of Dirichlet eigenvalues or if it is of class $C^{1}$ in case of Neumann eigenvalues, see \cite{frank,frank2,frank3}),
then the following two-term asymptotic formula for the average of the first $k$ eigenvalues holds:
\begin{equation}\label{weyl_D}
\frac{1}{k}\sum_{j=1}^k\lambda_j=\frac{d}{d+2}C_d\left(\frac{k}{|\Omega|}\right)^{\frac{2}{d}}+\frac{1}{2(d+1)}\frac{C_d^{\frac{d+1}{2}}}{C_ {d-1}^{\frac{d-1}{2}}}\frac{|\partial\Omega|}{|\Omega|}\left(\frac{k}{|\Omega|}\right)^{\frac{1}{d}}+o(k^{\frac{1}{d}})
\end{equation}
as $k\rightarrow +\infty$, for Dirichlet boundary conditions, and
\begin{equation}\label{weyl_N}
\frac{1}{k}\sum_{j=1}^k\mu_j=\frac{d}{d+2}C_d\left(\frac{k}{|\Omega|}\right)^{\frac{2}{d}}-\frac{1}{2(d+1)}\frac{C_d^{\frac{d+1}{2}}}{C_ {d-1}^{\frac{d-1}{2}}}\frac{|\partial\Omega|}{|\Omega|}\left(\frac{k}{|\Omega|}\right)^{\frac{1}{d}}+o(k^{\frac{1}{d}})
\end{equation}
as $k\rightarrow +\infty$, for Neumann boundary conditions, where $|\partial\Omega|$ denotes the $d-1$ dimensional Hausdorff measure of the boundary.

We declare now that henceforth we consider only $d\geq 2$. In fact for $d=1$ all the eigenvalues are explicitly known.

The present paper is organized as follows. In Section \ref{sec:1.5} we state the main results concerning Dirichlet eigenvalues. In particular we state a general theorem on lower bounds for Riesz means of Dirichlet eigenvalues and upper bounds for averages, namely Theorem \ref{dirichlet_laplacian_thm_general}, which holds for all bounded domains in $\mathbb R^d$. In particular the bounds provided by Theorem \ref{dirichlet_laplacian_thm_general} depend on the choice of a test function $\phi\in H^1_0(\Omega)\cap L^{\infty}(\Omega)$. As a corollary, still in Section \ref{sec:1.5}, we get lower bounds for the partition function (Corollary \ref{dirichlet_bounds_spec_fcn}), and two-sided bounds for single eigenvalues (Corollary \ref{quadratic-eigenvalue-inequality-general}), depending on $\phi$. In Subsection \ref{sec:1.5.1} we provide explicit bounds for bounded domains without further regularity assumptions (Theorem \ref{simple_bound}) and for convex domains (Theorem \ref{thm_convex_first_eigenvalues}) by choosing a suitable test function $\phi$ in Theorem \ref{dirichlet_laplacian_thm_general}. In Subsection \ref{sec:1.5.2} we present explicit bounds under the assumption that the Minkowski content relative to $\Omega$ equals the Hausdorff measure of the boundary (Theorem \ref{main_thm_evsums-DirichletLaplacian} and Corollary \ref{thm_bounds_part_func_regular} ). In Subsection \ref{sec:1.5.3} we present explicit bounds under the assumption that the boundary is of class $C^2$ (Theorem \ref{smooth_1}) and additionally that it is mean convex (Corollary \ref{smooth_convex}). Subsection \ref{sec:1.5.4} contains more explicit estimates for planar sets (Theorem \ref{planar_thm}). The bounds in Subsections \ref{sec:1.5.2}, \ref{sec:1.5.3} and \ref{sec:1.5.4} are asymptotically sharp and present a second term which coincides with the second term of the semiclassical expansion \eqref{weyl_D} up to a dimensional constant. Moreover we have explicit geometric control of the remainder term.

In Section \ref{sec:1.6} we state the main results concerning Neumann eigenvalues. In particular, we shall present improvements of classical upper bounds for Neumann eigenvalues (Theorem \ref{refinement_kroger_thm}, see also \cite{harrell_stubbe_18}) as well as lower bounds for averages and upper bounds for Riesz means (Theorem \ref{thm_bounds_neumann_eigenvalues}) of Neumann eigenvalues for domains of class $C^2$ by means of the averaged variational principle. These bounds are asymptotically sharp and present a second term which coincides with the second term of the corresponding semiclassical expansion \eqref{weyl_N} up to a dimensional constant. Moreover we have explicit geometric control of the remainder term.

The proofs of the results stated in Sections \ref{sec:1.5} and \ref{sec:1.6} are contained in Sections \ref{sec:2} and \ref{sec:3}, respectively.

In Appendix \ref{sec:4} we  present some final remarks. In particular, in Appendix \ref{sub:4:1} we  show how to recover Theorem \ref{thm_bounds_neumann_eigenvalues} alternatively through a generalization to any Laplace eigenfunction of the method of Berezin-Li-Yau (\cite{Berezin,LiYau}). In Appendix \ref{sub:4:2} we  show how to obtain asymptotically Weyl-sharp upper and lower bounds for single Dirichlet and Neumann eigenvalues from bounds on averages.




\section{The main results for Dirichlet eigenvalues}\label{sec:1.5}

In this section we present bounds for averages of eigenvalues of $-\Delta^D_{\Omega}$ obtained from an application of the averaged variational principle. Here $-\Delta^D_{\Omega}$ denotes the self-adjoint realization of the (nonnegative) Laplacian on $\Omega$ with Dirichlet boundary conditions. Here and in what follows we denote by $\left\{\lambda_j\right\}_{j=1}^{\infty}$ the set of the (positive) eigenvalues of $-\Delta^D_{\Omega}$ and by $\left\{u_j\right\}_{j=1}^{\infty}$ the  corresponding orthonormal set in $L^2(\Omega)$ of eigenfunctions.

We will develop a general approach which yields upper bounds for Dirichlet eigenvalues for a quite wide class of domains (Theorem \ref{dirichlet_laplacian_thm_general}). Then, under more regularity assumptions on the domain, the bounds given by Theorem \ref{dirichlet_laplacian_thm_general}  can be made more explicit (see Theorems \ref{simple_bound}, \ref{thm_convex_first_eigenvalues}, \ref{main_thm_evsums-DirichletLaplacian}, \ref{smooth_1} and \ref{planar_thm}, and Corollaries \ref{thm_bounds_part_func_regular} and \ref{smooth_convex}).  In particular we provide two-term bounds showing the correct asymptotic behaviour. We also refer to  \cite{Kro2} and \cite{Lap1997} for related results.

In the following, we denote by $\|\cdot\|_{2}$ and $\|\cdot\|_{\infty}$ the standard norms on $L^2(\Omega)$ and $L^{\infty}(\Omega)$ respectively. We also denote by $1_A$ the characteristic function of $A\subseteq\mathbb R^d$. We write $\sum_j$ to indicate the sum over all positive integers $j$.

Applying the averaged variational principle \eqref{averaged-var-principle-Rieszmean2_0} with test functions of the form $f_{\xi}(x)=(2\pi)^{-d/2}e^{i\xi\cdot x}\phi(x)$, with $\phi(x)\in H^1_0(\Omega)\cap L^{\infty}(\Omega)$, we obtain the following theorem.

\begin{theorem}\label{dirichlet_laplacian_thm_general}
Let $\Omega$ be a bounded domain in $\mathbb R^d$. Then for any $\phi\in H^1_0(\Omega)\cap L^{\infty}(\Omega)$ and $z>0$ the following inequality holds
\begin{equation}\label{Riesz-mean-ineq-DirichletLaplacian}
  \sum_{j}(z-\lambda_{j})_{+}\|\phi u_j\|_2^2\geq \frac{2}{d+2}\,(2\pi)^{-d}\omega_d||\phi||_2^2\left(z-\frac{||\nabla\phi||_2^2}{||\phi||_2^2}\right)_{+}^{\frac{d}{2}+1}.
\end{equation}
Moreover, for all positive integers $k$
\begin{equation}\label{evsums-DirichletLaplacian1}
 \frac1{k}\sum_{j=1}^k\lambda_{j}\leq \frac{||\nabla\phi||_2^2}{||\phi||_2^2} +\frac{d}{d+2}\,C_d\left(\frac{k}{|\Omega|}\right)^{\frac{2}{d}}\rho(\phi)^{-2/d},
\end{equation}
where $\displaystyle \rho(\phi):=\frac{||\phi||_2^2}{|\Omega|\cdot||\phi||_{\infty}^2}<1$.
\end{theorem}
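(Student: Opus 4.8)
The plan is to apply the averaged variational principle \eqref{averaged-var-principle-Rieszmean2_0} to the Dirichlet Laplacian $H = -\Delta^D_\Omega$ with the family of test functions $f_\xi(x) = (2\pi)^{-d/2} e^{i\xi\cdot x}\phi(x)$ indexed by $\xi\in M = \mathbb{R}^d$ with Lebesgue measure. First I would verify the tight-frame condition: since $\widehat{\phi u_j}$ is (up to normalization) the Fourier transform and $\langle \phi, f_\xi\rangle$ relates to $\mathcal{F}[\bar\phi\phi](\xi)$ — more precisely $\langle \phi_1, f_\xi\rangle_{L^2(\Omega)} = (2\pi)^{-d/2}\widehat{\overline{\phi_1}\,\phi}(\xi)$ type expression — Plancherel gives $\int_{\mathbb{R}^d}|\langle \psi, f_\xi\rangle|^2\,d\xi = (2\pi)^{-d}\|\phi\psi\|_2^2$ for any $\psi\in L^2(\Omega)$. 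Wait — that is not a constant multiple of $\|\psi\|_2^2$, so strictly the tight-frame condition fails; however, what actually enters \eqref{averaged-var-principle-Rieszmean2_0} is only the quantity $\int_M |\langle f_\xi, u_j\rangle|^2\,d\mu = (2\pi)^{-d}\|\phi u_j\|_2^2$ on the left and the two integrals $\int_M \|f_\xi\|^2\,d\mu_0$ and $\int_M Q_H(f_\xi,f_\xi)\,d\mu_0$ on the right, so I would just compute these three directly without invoking the frame condition in its literal form.

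Next I would compute the right-hand integrand. We have $\|f_\xi\|_{L^2}^2 = (2\pi)^{-d}\|\phi\|_2^2$, independent of $\xi$. For the quadratic form, $\nabla f_\xi = (2\pi)^{-d/2}e^{i\xi\cdot x}(i\xi\,\phi + \nabla\phi)$, so
\[
Q_H(f_\xi,f_\xi) = \|\nabla f_\xi\|_2^2 = (2\pi)^{-d}\left(|\xi|^2\|\phi\|_2^2 + \|\nabla\phi\|_2^2 + 2\,\mathrm{Re}\!\int_\Omega i\xi\cdot\nabla\phi\;\bar\phi\right);
\]
here $\phi\in H^1_0(\Omega)$ is real-valued (or one takes real part), and the cross term integrates to zero since $\int_\Omega \xi\cdot\nabla\phi\,\phi = \tfrac12\int_\Omega \xi\cdot\nabla(\phi^2) = 0$ by the divergence theorem with $\phi\in H^1_0$. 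Thus $z\|f_\xi\|^2 - Q_H(f_\xi,f_\xi) = (2\pi)^{-d}\big((z-|\xi|^2)\|\phi\|_2^2 - \|\nabla\phi\|_2^2\big)$. Choosing $d\mu_0(\xi) = 1_{\{z\|f_\xi\|^2 \geq Q_H(f_\xi,f_\xi)\}}\,d\xi$ as in the derivation of \eqref{averaged-var-principle-Rieszmean2}, the right-hand side of \eqref{averaged-var-principle-Rieszmean2_0} becomes $(2\pi)^{-d}\|\phi\|_2^2\int_{\{|\xi|^2 \leq z - \|\nabla\phi\|_2^2/\|\phi\|_2^2\}}\big(z - \|\nabla\phi\|_2^2/\|\phi\|_2^2 - |\xi|^2\big)\,d\xi$. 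A standard polar-coordinates computation, $\int_{|\xi|\le R}(R^2 - |\xi|^2)\,d\xi = \frac{2}{d+2}\omega_d R^{d+2}$, with $R^2 = (z - \|\nabla\phi\|_2^2/\|\phi\|_2^2)_+$, yields exactly the right-hand side of \eqref{Riesz-mean-ineq-DirichletLaplacian}, while the left-hand side of \eqref{averaged-var-principle-Rieszmean2_0} is $\sum_{j=1}^k (z-\lambda_j)(2\pi)^{-d}\|\phi u_j\|_2^2$ evaluated at $z = \lambda_{k+1}$ and then extended to $z\in[\lambda_k,\lambda_{k+1}]$ and finally, by monotonicity/positivity, to all $z>0$, giving $\sum_j(z-\lambda_j)_+\|\phi u_j\|_2^2$. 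This establishes \eqref{Riesz-mean-ineq-DirichletLaplacian}.

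For the second statement I would bound the left-hand side of \eqref{Riesz-mean-ineq-DirichletLaplacian} from above by $\|\phi\|_\infty^2\sum_j(z-\lambda_j)_+ = \|\phi\|_\infty^2 R_1(z)$, using $\|\phi u_j\|_2^2 \le \|\phi\|_\infty^2\|u_j\|_2^2 = \|\phi\|_\infty^2$. Combined with \eqref{Riesz-mean-ineq-DirichletLaplacian} this gives the clean lower bound $R_1(z) \ge \frac{2}{d+2}(2\pi)^{-d}\omega_d\,\rho(\phi)\,|\Omega|\,(z - \|\nabla\phi\|_2^2/\|\phi\|_2^2)_+^{d/2+1}$ after substituting $\|\phi\|_2^2 = \rho(\phi)|\Omega|\|\phi\|_\infty^2$. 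Then I would pass to eigenvalue averages via the Legendre transform identity $\mathcal{L}[R_1](k) = \sum_{j=1}^k\lambda_j$ (evaluated at the integer $w=k$) recorded in the introduction: taking the Legendre transform of the lower bound for $R_1$ — i.e. evaluating $\sup_z(kz - \text{(bound)})$, equivalently using that $R_1(z)\ge G(z)$ implies $\mathcal{L}[R_1] \le \mathcal{L}[G]$ — and computing the Legendre transform of $z\mapsto c(z-a)_+^{d/2+1}$ explicitly (a one-line power-rule computation producing the factor $\frac{d}{d+2}C_d$ with the $\rho(\phi)^{-2/d}$ and the shift $a = \|\nabla\phi\|_2^2/\|\phi\|_2^2$), yields \eqref{evsums-DirichletLaplacian1}. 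The main obstacle I anticipate is purely bookkeeping: tracking the constant $C_d = 4\pi^2\omega_d^{-2/d}$ through the Legendre transform and the $(2\pi)^{-d}$, $\omega_d$ factors so that it lands in precisely the stated form; the inequality $\rho(\phi) < 1$ is immediate from $\|\phi\|_2^2 \le \|\phi\|_\infty^2|\Omega|$ (with strictness since $\phi\in H^1_0$ cannot be a.e. equal to its sup on a domain), and the frame-condition subtlety is sidestepped by the direct computation above.
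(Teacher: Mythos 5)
Your proof is correct and follows essentially the same route as the paper: plane-wave trial functions $f_\xi=(2\pi)^{-d/2}e^{i\xi\cdot x}\phi$ in the AVP, the same Plancherel computation with the vanishing cross term, and the polar-coordinate integral over $|\xi|\le R$. The only (minor) variation is in deriving \eqref{evsums-DirichletLaplacian1}: the paper substitutes a specific $R^2=C_d(k/|\Omega|)^{2/d}\rho(\phi)^{-2/d}$ directly into the pre-optimized inequality \eqref{general_AVP-DirichletLaplacian} so that the $\lambda_{k+1}$-terms cancel, whereas you first establish the Riesz-mean bound \eqref{Riesz-mean-ineq-DirichletLaplacian} and then Legendre-transform it; these are dual formulations of the same optimization and your bookkeeping ($cp=(2\pi)^{-d}\omega_d\rho(\phi)|\Omega|$, $(cp)^{-2/d}$ producing $C_d$) does land on the stated constants.
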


\begin{rem}
The right side of inequality \eqref{evsums-DirichletLaplacian1} relates the upper bound to the semiclassical behaviour of the average of the first $k$ eigenvalues, which is, by the work of Berezin, Li and Yau (\cite{Berezin,LiYau}), a lower bound for the average.
\end{rem}

A lower bound for the partition function (the trace of the heat kernel) is obtained by Laplace transforming inequality \eqref{Riesz-mean-ineq-DirichletLaplacian}. We have the following corollary.

\begin{corollary}\label{dirichlet_bounds_spec_fcn}
Let $\Omega$ be a bounded domain in $\mathbb R^d$. Then for any $\phi\in H^1_0(\Omega)\cap L^{\infty}(\Omega)$ and $t>0$,
\begin{equation}\label{Partition-function-inequality}
  \sum_{j=1}^{\infty}e^{-\lambda_{j}t}\|\phi u_j\|_2^2\geq (4\pi t)^{-d/2}||\phi||_2^2e^{-\frac{||\nabla\phi||_2^2}{||\phi||_2^2}\,t}.
\end{equation}
Moreover
\begin{equation}\label{part-fct-estimate-small-times}
  \sum_{j=1}^{\infty}e^{-\lambda_{j}t}\geq \frac{|\Omega|}{(4\pi t)^{\frac{d}{2}}}-\frac{1}{(4\pi t)^{\frac{d}{2}}}\cdot\frac{||\nabla\phi||_2^2\,t+|\Omega|\cdot||\phi||_{\infty}^2-||\phi||_2^2}{||\phi||_{\infty}^2}.
\end{equation}
\end{corollary}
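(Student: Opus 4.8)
The plan is to obtain \eqref{Partition-function-inequality} by Laplace transforming the Riesz-mean inequality \eqref{Riesz-mean-ineq-DirichletLaplacian} in the spectral parameter $z$, and then to deduce \eqref{part-fct-estimate-small-times} from \eqref{Partition-function-inequality} by a crude pointwise bound on $\|\phi u_j\|_2$ combined with the elementary estimate $e^{-x}\geq 1-x$.

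\emph{First step.} I record the elementary Laplace-transform identity
\[
\int_0^\infty e^{-zt}(z-a)_+^\sigma\,dz=\Gamma(\sigma+1)\,t^{-\sigma-1}e^{-at},\qquad t>0,\ a\geq 0,\ \sigma>-1,
\]
obtained by the substitution $z=a+u$. Applying it with $\sigma=1$ and $a=\lambda_j$ gives $\int_0^\infty e^{-zt}(z-\lambda_j)_+\,dz=t^{-2}e^{-\lambda_j t}$. Now multiply both sides of \eqref{Riesz-mean-ineq-DirichletLaplacian} by $e^{-zt}$ and integrate over $z\in(0,\infty)$. On the left, all terms are nonnegative, so Tonelli's theorem allows the exchange of sum and integral, producing $t^{-2}\sum_j e^{-\lambda_j t}\|\phi u_j\|_2^2$. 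On the right I use the identity above with $\sigma=\frac d2+1$ and $a=\|\nabla\phi\|_2^2/\|\phi\|_2^2$, which yields $\Gamma(\tfrac d2+2)\,t^{-d/2-2}e^{-at}$ times the constant $\frac{2}{d+2}(2\pi)^{-d}\omega_d\|\phi\|_2^2$. Multiplying through by $t^2$ and collecting constants—using $\frac{2}{d+2}\Gamma(\tfrac d2+2)=\Gamma(\tfrac d2+1)$, then $\omega_d\Gamma(\tfrac d2+1)=\pi^{d/2}$ by \eqref{Cd} and the definition of $\omega_d$, and finally $(2\pi)^{-d}\pi^{d/2}=(4\pi)^{-d/2}$—collapses the right-hand side exactly to $(4\pi t)^{-d/2}\|\phi\|_2^2 e^{-at}$, which is \eqref{Partition-function-inequality}.

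\emph{Second step.} For \eqref{part-fct-estimate-small-times} I bound $\|\phi u_j\|_2^2\leq \|\phi\|_\infty^2\|u_j\|_2^2=\|\phi\|_\infty^2$, so that $\sum_j e^{-\lambda_j t}\geq \|\phi\|_\infty^{-2}\sum_j e^{-\lambda_j t}\|\phi u_j\|_2^2$; combining with \eqref{Partition-function-inequality} gives $\sum_j e^{-\lambda_j t}\geq (4\pi t)^{-d/2}\|\phi\|_\infty^{-2}\|\phi\|_2^2 e^{-at}$. Applying $e^{-at}\geq 1-at$ and clearing denominators yields $\|\phi\|_2^2 e^{-at}\geq \|\phi\|_2^2-\|\nabla\phi\|_2^2 t$, hence
\[
\sum_j e^{-\lambda_j t}\geq \frac{1}{(4\pi t)^{d/2}}\cdot\frac{\|\phi\|_2^2-\|\nabla\phi\|_2^2 t}{\|\phi\|_\infty^2},
\]
and adding and subtracting $|\Omega|\,\|\phi\|_\infty^2$ in the numerator rewrites the right-hand side as $\frac{|\Omega|}{(4\pi t)^{d/2}}-\frac{1}{(4\pi t)^{d/2}}\cdot\frac{\|\nabla\phi\|_2^2 t+|\Omega|\,\|\phi\|_\infty^2-\|\phi\|_2^2}{\|\phi\|_\infty^2}$, which is \eqref{part-fct-estimate-small-times}.

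There is essentially no analytic difficulty in this argument: the only points deserving a word of care are the interchange of summation and $z$-integration, which is immediate from nonnegativity via Tonelli, and the bookkeeping of the dimensional constants, where the identities $\omega_d=\Gamma(1+d/2)^{-1}\pi^{d/2}$ and $(2\pi)^{-d}\pi^{d/2}=(4\pi)^{-d/2}$ do all the work.
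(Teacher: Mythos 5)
Your proof is correct and follows essentially the same route as the paper's: Laplace transform of \eqref{Riesz-mean-ineq-DirichletLaplacian} gives \eqref{Partition-function-inequality}, then the bound $\|\phi u_j\|_2^2\leq\|\phi\|_\infty^2$ and $e^{-x}\geq 1-x$ give \eqref{part-fct-estimate-small-times}. The extra detail you supply on the Laplace-transform identity and the $\Gamma$/$\omega_d$ bookkeeping is exactly the computation the paper leaves implicit in the word ``immediately.''
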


From Theorem \ref{dirichlet_laplacian_thm_general} we also deduce ``intrinsic'' bounds on single eigenvalues, which are stated in the following corollary.

\begin{corollary}\label{quadratic-eigenvalue-inequality-general}
Let $\Omega$ be a bounded domain in $\mathbb R^d$. Let $\phi\in H^1_0(\Omega)\cap L^{\infty}(\Omega)$ and suppose that $\displaystyle \lambda_k\geq \frac{||\nabla\phi||_2^2}{||\phi||_2^2}$. Then the bounds
\begin{equation}\label{AVP-eigenvalue-bound}
  C_d\left(\frac{k}{|\Omega|}\right)^{\frac{2}{d}}\rho(\phi)^{-2/d}x_{-}\leq \lambda_k\leq \lambda_{k+1}\leq  C_d\left(\frac{k}{|\Omega|}\right)^{\frac{2}{d}}\rho(\phi)^{-2/d}x_{+}
\end{equation}
hold true, where
\begin{equation*}
  x_{\pm}=1\pm \sqrt{1-\frac{\frac{d+2}{d}\frac1{k}\sum_{j=1}^k\lambda_{j}-\frac{||\nabla\phi||_2^2}{||\phi||_2^2}}{ C_d\left(\frac{k}{|\Omega|}\right)^{\frac{2}{d}}\rho(\phi)^{-2/d}}}.
\end{equation*}
\end{corollary}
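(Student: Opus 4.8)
The strategy is to read off from the Riesz‑mean bound \eqref{Riesz-mean-ineq-DirichletLaplacian} an inequality for the individual eigenvalues $\lambda_k,\lambda_{k+1}$, and then to turn it into a quadratic inequality whose two roots are precisely $C_d(k/|\Omega|)^{2/d}\rho(\phi)^{-2/d}x_\pm$. First I would apply \eqref{Riesz-mean-ineq-DirichletLaplacian} at an arbitrary $z\in[\lambda_k,\lambda_{k+1}]$. For such $z$ only the indices $j\le k$ contribute on the left‑hand side, and there $(z-\lambda_j)_+=z-\lambda_j\ge 0$, so the bound $\|\phi u_j\|_2^2\le\|\phi\|_\infty^2$ gives $\sum_j(z-\lambda_j)_+\|\phi u_j\|_2^2\le\|\phi\|_\infty^2\bigl(kz-\sum_{j=1}^k\lambda_j\bigr)$. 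Substituting this, together with $(2\pi)^{-d}\omega_d=C_d^{-d/2}$ (equivalent to $C_d=4\pi^2\omega_d^{-2/d}$) and $\|\phi\|_2^2/\|\phi\|_\infty^2=|\Omega|\rho(\phi)$, into \eqref{Riesz-mean-ineq-DirichletLaplacian} and dividing by $k\|\phi\|_\infty^2$, I obtain — writing $A:=\|\nabla\phi\|_2^2/\|\phi\|_2^2$ and $B:=C_d(k/|\Omega|)^{2/d}\rho(\phi)^{-2/d}$ and using $z\ge\lambda_k\ge A$ to drop the positive part —
\[
z-\frac1k\sum_{j=1}^k\lambda_j\ \ge\ \frac{2}{d+2}\,(z-A)\Bigl(\frac{z-A}{B}\Bigr)^{d/2},\qquad z\in[\lambda_k,\lambda_{k+1}].
\]

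Next I would linearise the factor $\bigl(\tfrac{z-A}{B}\bigr)^{d/2}$. Since $d\ge 2$ we have $p:=d/2\ge 1$, and Bernoulli's inequality $(1+t)^p\ge 1+pt$ with $t=\tfrac{z-A}{B}-1\ge -1$ gives $\bigl(\tfrac{z-A}{B}\bigr)^{d/2}\ge\tfrac d2\tfrac{z-A}{B}-\tfrac{d-2}{2}$; multiplying by the nonnegative quantity $\tfrac{2}{d+2}(z-A)$ converts the right‑hand side above into a quadratic polynomial in $z$. Collecting terms, and using \eqref{evsums-DirichletLaplacian1} together with $z\ge\lambda_k\ge A$ to absorb the lower‑order contributions involving $A$, this should be brought to the quadratic inequality
\[
z^2-2Bz+B\Bigl(\frac{d+2}{d}\,\frac1k\sum_{j=1}^k\lambda_j-A\Bigr)\ \le\ 0,
\]
valid in particular at $z=\lambda_k$ and at $z=\lambda_{k+1}$. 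Its roots are exactly $Bx_-$ and $Bx_+$, so both $\lambda_k$ and $\lambda_{k+1}$ lie in the interval $[Bx_-,Bx_+]$, which is \eqref{AVP-eigenvalue-bound}.

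The step I expect to be the main obstacle is the passage from the super‑linear inequality to the displayed quadratic: the plain Bernoulli linearisation alone yields a quadratic with the correct leading behaviour but with spurious lower‑order terms in $A=\|\nabla\phi\|_2^2/\|\phi\|_2^2$, and extracting precisely the combination $B\bigl(\tfrac{d+2}{d}\tfrac1k\sum_{j=1}^k\lambda_j-A\bigr)$ — hence the sharp constants in $x_\pm$ — requires using the estimate at $z=\lambda_k$ in tandem with the one at $z=\lambda_{k+1}$ and with the averaged bound \eqref{evsums-DirichletLaplacian1}. One also has to check there that the radicand $1-\bigl(\tfrac{d+2}{d}\tfrac1k\sum_{j=1}^k\lambda_j-A\bigr)/B$ is nonnegative — this is where the hypothesis $\lambda_k\ge\|\nabla\phi\|_2^2/\|\phi\|_2^2$ enters — so that $x_\pm$ are real. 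Everything else is routine algebra.
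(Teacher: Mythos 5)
Your overall route is the same as the paper's: estimate the left side of \eqref{Riesz-mean-ineq-DirichletLaplacian} by $\|\phi\|_\infty^2(kz-\sum_{j\le k}\lambda_j)$ for $z\in[\lambda_k,\lambda_{k+1}]$, and then turn the resulting super-linear inequality into a quadratic.  (The paper phrases this as ``follow \cite[Theorem 1.1]{harrell_stubbe_18} with $\mu_j$ replaced by $\lambda_j-\|\nabla\phi\|_2^2/\|\phi\|_2^2$ and the obvious renormalisation,'' using a refinement of Young's inequality in Appendix A of that reference; your Bernoulli step $(1+t)^{d/2}\ge 1+\tfrac d2\,t$ applied to the optimised form \eqref{Riesz-mean-ineq-DirichletLaplacian} is algebraically the same move.)  Writing $A:=\|\nabla\phi\|_2^2/\|\phi\|_2^2$, $B:=C_d(k/|\Omega|)^{2/d}\rho(\phi)^{-2/d}$, $S:=\tfrac1k\sum_{j\le k}\lambda_j$ and $w:=z-A$, your Bernoulli step, carried out carefully, gives
\[
w^2-2Bw+\tfrac{(d+2)B}{d}\bigl(S-A\bigr)\le 0,\qquad z\in[\lambda_k,\lambda_{k+1}],
\]
which is exactly what the paper's substitution $\mu_j\mapsto\lambda_j-A$ produces.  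So far so good.

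The gap is in your last step.  You assert that, by combining the bound at $z=\lambda_k$ with the one at $z=\lambda_{k+1}$ and invoking \eqref{evsums-DirichletLaplacian1}, one can bring the Bernoulli quadratic (with its ``spurious'' $A$-terms) to the form $z^2-2Bz+B(\tfrac{d+2}{d}S-A)\le 0$.  That does not work.  Applying the same quadratic inequality at $z=\lambda_k$ and $z=\lambda_{k+1}$ provides no additional information (it is one inequality evaluated at two points, not a system), and \eqref{evsums-DirichletLaplacian1}, i.e.\ $S\le A+\tfrac{d}{d+2}B$, is precisely the statement that the discriminant of the Bernoulli quadratic is nonnegative, so it cannot strengthen it.  The two quadratics are genuinely different: the Bernoulli one is centered at $z=A+B$ with radicand $1-\tfrac{(d+2)(S-A)}{dB}$, while the one you state is centered at $z=B$ with radicand $1-\tfrac{(d+2)S-dA}{dB}$; their radicands differ by $\tfrac{2A}{dB}$ and neither inequality implies the other for general $A>0$.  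What the argument actually delivers is two-sided bounds for $\lambda_k-A$ and $\lambda_{k+1}-A$ with $\tfrac{d+2}{d}(S-A)$ (not $\tfrac{d+2}{d}S-A$) in the radicand — this is what the paper's hint (``replace $\mu_j$ by $\lambda_j-A$'' in \cite[Theorem 1.1]{harrell_stubbe_18}) literally gives, and the printed $x_\pm$ in the corollary should be read accordingly.  In short: your computation is right, your proposed repair is not an argument, and the target quadratic as you wrote it is not attainable by this route.
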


\begin{rem}\label{rem_test}
We note that bounds \eqref{AVP-eigenvalue-bound} become asymptotically Weyl-sharp if we choose the test function $\phi$ in a suitable way, namely $\phi=\phi_k$ such that $\|\phi_k\|_2^2\sim |\Omega|$, $\|\nabla\phi_k\|_2^2\sim k^{1/d}$ and $\|\phi_k\|_{\infty}^2\sim 1$ as $k\rightarrow +\infty$. Suitable choices are described in Subsections \ref{sec:1.5.1}, \ref{sec:1.5.2}, \ref{sec:1.5.3} and \ref{sec:1.5.4}; however we
perform explicit computation mainly for sums. Explicit Weyl-sharp bounds for single eigenvalues can be obtained from \eqref{AVP-eigenvalue-bound} in a similar way. (See also Appendix \ref{sub:4:2}.)
\end{rem}

Theorem \ref{dirichlet_laplacian_thm_general} and its corollaries are then applied to domains with more and more regularity requirements. This leads to more explicit bounds by choosing suitable cut-off functions $\phi$ in \eqref{evsums-DirichletLaplacian1} and \eqref{part-fct-estimate-small-times}.




\subsection{Upper bounds with no restriction on the regularity of the boundary}\label{sec:1.5.1}

A first explicit bound can be obtained by choosing $\phi=u_1$, where $u_1$ is the eigenfunction associated with the first Dirichlet eigenvalue $\lambda_1$ of $\Omega$. Here and in what follows we shall denote by $r_{\Omega}$ the inradius of a domain $\Omega$, that is,
\begin{equation}\label{inradius}
r_{\Omega}:=\underset{x\in\Omega}{\max}\underset{y\in\partial\Omega}{\min}|x-y|.
\end{equation}

We can now state the following theorem.

\begin{theorem}\label{simple_bound}
Let $\Omega$ be a bounded domain in $\mathbb R^d$. Then for all $z\geq\frac{d+2}{2r_{\Omega}^2}$,
\begin{equation}\label{avp-DirichletLplacian3}
  \frac{d\lambda_1(B)^{\frac{d-1}{2}}}{|J_{\frac{d}{2}}(\sqrt{\lambda_1(B)})|}\,r_{\Omega}^{-d}\sum_{j}(z-\lambda_{j})_+\geq \frac{1}{d+2}(z-\lambda_{1})_+^{\frac{d}{2}+1},
\end{equation}
where $B$ denotes the unit ball in $\mathbb R^d$, $\lambda_1(B)$ is the first Dirichlet eigenvalue of $B$, $J_{\nu}$ denotes the Bessel function of the first kind and order $\nu$, $\lambda_1$ is the first Dirichlet eigenvalue of $\Omega$, and $r_{\Omega}$ is the inradius of $\Omega$.

Moreover, for all positive integers $k$,
\begin{equation}\label{evsums-DirichletLaplacian15_v2}
\frac1{k}\sum_{j=1}^k(\lambda_{j}-\lambda_1)\leq \frac{d}{d+2}C_d\left(\frac{k}{|\Omega|}\right)^{\frac{2}{d}}\cdot\left(\frac{2d\omega_d|\Omega|}{J_{\frac{d}{2}}(\sqrt{\lambda_1(B)})}\right)^{\frac{2}{d}}\cdot\frac{\lambda_1(B)^{\frac{d-1}{d}}}{4\pi^2r_{\Omega}^2}.
\end{equation}
\end{theorem}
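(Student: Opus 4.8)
The plan is to specialize Theorem~\ref{dirichlet_laplacian_thm_general} to the test function $\phi=u_1$, the positive $L^2$-normalized first Dirichlet eigenfunction of $\Omega$. Since $-\Delta^D_\Omega u_1=\lambda_1u_1$ and $\|u_1\|_2=1$, one has $\|\nabla u_1\|_2^2=Q_{-\Delta^D_\Omega}(u_1,u_1)=\lambda_1$, and $u_1\in H^1_0(\Omega)\cap L^\infty(\Omega)$ because Dirichlet eigenfunctions on a bounded domain are bounded (e.g.\ from $u_1=e^{\lambda_1 t}e^{t\Delta^D_\Omega}u_1$ and the $L^2\!\to\!L^\infty$ smoothing of the heat semigroup). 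Hence \eqref{Riesz-mean-ineq-DirichletLaplacian} becomes
\[
\sum_j(z-\lambda_j)_+\|u_1u_j\|_2^2\ \ge\ \frac{2}{d+2}\,(2\pi)^{-d}\omega_d\,(z-\lambda_1)_+^{\frac d2+1},
\]
and \eqref{evsums-DirichletLaplacian1} becomes, after the volume factors cancel via $\rho(u_1)^{-2/d}=(|\Omega|\,\|u_1\|_\infty^2)^{2/d}$,
\[
\frac1k\sum_{j=1}^k(\lambda_j-\lambda_1)\ \le\ \frac{d}{d+2}\,C_d\,k^{2/d}\,\|u_1\|_\infty^{4/d}.
\]
On the left of the first inequality I would use the crude bound $\|u_1u_j\|_2^2\le\|u_1\|_\infty^2\|u_j\|_2^2=\|u_1\|_\infty^2$. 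Thus both assertions reduce to a single pointwise estimate,
\begin{equation}\label{plan-linfty}
\|u_1\|_\infty^2\ \le\ \frac{2d\,\omega_d}{(2\pi)^d}\,\frac{\lambda_1(B)^{\frac{d-1}{2}}}{|J_{d/2}(\sqrt{\lambda_1(B)})|}\,r_\Omega^{-d},
\end{equation}
whose insertion, together with $C_d=4\pi^2\omega_d^{-2/d}$ and $\lambda_1(B)=j_{d/2-1,1}^2$, yields \eqref{avp-DirichletLplacian3} and \eqref{evsums-DirichletLaplacian15_v2} by a one-line rearrangement in each case.

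It remains to prove \eqref{plan-linfty}. The point is that on every ball $B_R(x)\subseteq\Omega$ the eigenfunction solves the Helmholtz equation $-\Delta u_1=\lambda_1 u_1$, so it can be compared with the explicit first Dirichlet eigenfunction of $B_R$, namely $r\mapsto r^{1-d/2}J_{d/2-1}(\sqrt{\lambda_1(B)}\,r/R)$, whose $L^2$-norm over $B_R$ involves precisely $J_{d/2}(\sqrt{\lambda_1(B)})$ (through $\int_0^1 r J_{d/2-1}(\sqrt{\lambda_1(B)}\,r)^2\,dr=\tfrac12 J_{d/2}(\sqrt{\lambda_1(B)})^2$). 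Concretely, at a point $x_0$ where $u_1$ attains its supremum set $R=\mathrm{dist}(x_0,\partial\Omega)$; domain monotonicity gives $\lambda_1\le\lambda_1(B_R(x_0))=\lambda_1(B)/R^2$, so $\sqrt{\lambda_1}\,R\le\sqrt{\lambda_1(B)}$, and in particular $\lambda_1\le\lambda_1(B)/r_\Omega^2$. Pizzetti's spherical mean value identity for the Helmholtz equation,
\[
\frac{1}{|\partial B_r(x_0)|}\int_{\partial B_r(x_0)}u_1\,dS\;=\;u_1(x_0)\,\Gamma(\tfrac d2)\Bigl(\tfrac{2}{\sqrt{\lambda_1}\,r}\Bigr)^{\frac d2-1}J_{\frac d2-1}(\sqrt{\lambda_1}\,r),
\]
integrated in $r$ against $r^{d-1}$ over $[0,R]$, combined with $\int_0^a s^{d/2}J_{d/2-1}(s)\,ds=a^{d/2}J_{d/2}(a)$ and the Cauchy--Schwarz bound $\int_{B_R(x_0)}u_1\le|B_R|^{1/2}\|u_1\|_{L^2(B_R)}\le|B_R|^{1/2}$, produces an estimate of the shape $\|u_1\|_\infty=u_1(x_0)\le c_d\,\lambda_1^{d/4}\big/ J_{d/2}(\sqrt{\lambda_1}\,R)$; feeding in $\lambda_1\le\lambda_1(B)/r_\Omega^2$ and the monotonicity of $s\mapsto s^{-d/2}J_{d/2}(s)$ on $(0,\sqrt{\lambda_1(B)}\,]$ (its derivative is $-s^{-d/2}J_{d/2+1}(s)<0$ there) then yields \eqref{plan-linfty}. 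The threshold $z\ge\frac{d+2}{2r_\Omega^2}$ in \eqref{avp-DirichletLplacian3} marks the range in which this comparison is non-vacuous.

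The main obstacle is precisely the previous step: the point where $\|u_1\|_\infty$ is attained need not be an incenter of $\Omega$, so a comparison on the inball of radius $r_\Omega$ only controls $u_1$ at the incenter, while the localized estimate above produces a factor $\mathrm{dist}(x_0,\partial\Omega)^{-d}$ rather than the desired $r_\Omega^{-d}$. Closing this gap is the delicate part: one must either exploit the constraint $\sqrt{\lambda_1}\,\mathrm{dist}(x_0,\partial\Omega)\le\sqrt{\lambda_1(B)}$ together with the monotonicity of the Bessel quotient carefully enough, or bypass it with a genuinely interior (semi-global) bound — e.g.\ $u_1(x_0)^2e^{-\lambda_1 t}\le\sum_j u_j(x_0)^2e^{-\lambda_j t}=p^\Omega_t(x_0,x_0)\le(4\pi t)^{-d/2}$, optimized in $t$ to give $\|u_1\|_\infty^2\le(e\lambda_1/(2\pi d))^{d/2}$ — followed by $\lambda_1\le\lambda_1(B)/r_\Omega^2$ and an elementary check that the resulting dimensional constant is no larger than the one in \eqref{plan-linfty} for every $d\ge2$. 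Once \eqref{plan-linfty} is in hand, the remainder is only bookkeeping with $C_d$, $\omega_d$, $\lambda_1(B)=j_{d/2-1,1}^2$ and the standard Bessel identities already invoked.
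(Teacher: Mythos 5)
Your reduction coincides exactly with the paper's: take $\phi=u_1$ in Theorem~\ref{dirichlet_laplacian_thm_general} (so $\|\phi\|_2=1$, $\|\nabla\phi\|_2^2=\lambda_1$), use $\|u_1u_j\|_2^2\le\|u_1\|_\infty^2$, and then all the geometry enters through the single pointwise estimate \eqref{plan-linfty} on $\|u_1\|_\infty^2$. The bookkeeping you describe from \eqref{plan-linfty} to \eqref{avp-DirichletLplacian3} and \eqref{evsums-DirichletLaplacian15_v2} is correct. The gap is that \eqref{plan-linfty} itself is asserted, not proved. The paper does not re-derive it: it \emph{cites} van den Berg \cite{vdB}, whose theorem is precisely this $L^\infty$ bound for an arbitrary bounded domain in $\mathbb R^d$, with no regularity hypothesis and with exactly the constants appearing in the statement.

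Of your two sketches, the first (Pizzetti at the maximum point with $R=\operatorname{dist}(x_0,\partial\Omega)$) has the flaw you yourself flag: it produces $\operatorname{dist}(x_0,\partial\Omega)^{-d}$, not $r_\Omega^{-d}$, and the monotonicity of $s\mapsto s^{-d/2}J_{d/2}(s)$ does not by itself trade one for the other; this is in fact the nontrivial content of van den Berg's paper, so that branch cannot be waved through as routine. Your fallback (heat kernel bound $\|u_1\|_\infty^2\le(e\lambda_1/(2\pi d))^{d/2}$ combined with $\lambda_1\le\lambda_1(B)/r_\Omega^2$) does give the $r_\Omega^{-d}$ factor, but with a \emph{different} dimensional constant; to recover the theorem as stated you must verify
\[
j_{d/2-1,1}\,\bigl|J_{d/2}\bigl(j_{d/2-1,1}\bigr)\bigr|\ \le\ \frac{2d}{\Gamma\!\left(1+\tfrac{d}{2}\right)}\left(\frac{d}{2e}\right)^{d/2}
\]
for every $d\ge 2$, which you call ``an elementary check'' but do not carry out. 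It is plausible (the left side grows like $d^{1/3}$, the right like $d^{1/2}$) but not self-evident, and the small-$d$ cases need inspection. As written, the proposal is therefore incomplete at its central step. The cleanest repair, and what the paper actually does, is to invoke \cite{vdB} directly; the heat-kernel alternative you mention is recorded separately in Remark~\ref{remark2} as a weaker but more elementary bound.
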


We remark that the bound \eqref{evsums-DirichletLaplacian15_v2} depends on $\lambda_1$. We refer to Remarks \ref{remark1} and \ref{remark2} for a discussion of inequality \eqref{evsums-DirichletLaplacian15_v2}. We also mention \cite{freitas_ub} and references therein for a discussion of sharp upper bounds for the first Dirichlet eigenvalue on convex domains in terms of the diameter and the inradius.

In the case of convex sets we can choose $\phi$ in a more efficient way and obtain bounds which depend only on $|\Omega|$ and $|\partial\Omega|$. We have the following

\begin{theorem}\label{thm_convex_first_eigenvalues}
If $|\Omega|$ is a bounded convex set in $\mathbb R^d$ and $z\geq \frac{d+2}{2r_{\Omega}^2}$, then
\begin{equation}\label{evriesz-DirichletLaplacian15_v30-NEW}
  \sum_{j}(z-\lambda_j)_{+}\geq \frac{2}{d+2}\,(2\pi)^{-d}\omega_d |\Omega|z^{\frac{d}{2}+1}-2\sqrt{\frac{2}{d+2}}(2\pi)^{-d}\omega_d |\partial\Omega|z^{\frac{d}{2}+\frac{1}{2}}.
\end{equation}
Moreover, for all positive integers $k$,
\begin{equation}\label{evsums-DirichletLaplacian15_v30-NEW}
  \frac{1}{k}\sum_{j=1}^{k}\lambda_j\leq \frac{d}{d+2}\,C_d\left(\frac{k}{|\Omega|}\right)^{\frac{2}{d}}+2\left(\frac{2}{d+2}\,C_d\right)^{\frac{1}{2}}\left(\frac{k}{|\Omega|}\right)^{\frac{1}{d}}\frac{|\partial \Omega|}{|\Omega|}+\frac{4|\partial \Omega|^2}{|\Omega|^2}.
\end{equation}
Finally, for $\lambda_1$ there is the simnple bound
\begin{equation}\label{planar_convex_lambda12}
\lambda_1\leq 4\frac{|\partial\Omega|^2}{|\Omega|^2}.
\end{equation}
\end{theorem}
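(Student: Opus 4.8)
The plan is to apply Theorem~\ref{dirichlet_laplacian_thm_general} with the boundary-corrected test function $\phi_h(x):=\min\{\delta(x)/h,\,1\}$, where $\delta(x):=\mathrm{dist}(x,\partial\Omega)$ and $h\in(0,r_\Omega]$ is a parameter optimised separately in each part. Since $\delta$ is $1$-Lipschitz with $|\nabla\delta|=1$ a.e., $\phi_h\in H^1_0(\Omega)\cap L^\infty(\Omega)$ with $\|\phi_h\|_\infty=1$; writing $\Omega_s:=\{x\in\Omega:\delta(x)>s\}$ for the inner parallel bodies one has $\|\nabla\phi_h\|_2^2=h^{-2}|\Omega\setminus\Omega_h|$ and $\|\phi_h\|_2^2\ge|\Omega_h|=|\Omega|-|\Omega\setminus\Omega_h|$. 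Convexity of $\Omega$ enters here and only here: each $\Omega_s$ is convex and $\subseteq\Omega$, so by monotonicity of perimeter under inclusion of convex bodies $\mathcal H^{d-1}(\partial\Omega_s)\le|\partial\Omega|$; combined with the coarea formula this yields the two estimates I will use throughout,
\[
\|\nabla\phi_h\|_2^2\le\frac{|\partial\Omega|}{h},\qquad \|\phi_h\|_2^2\ge|\Omega|-h|\partial\Omega|,
\]
as well as $|\Omega|=\int_0^{r_\Omega}\mathcal H^{d-1}(\partial\Omega_s)\,ds\le r_\Omega|\partial\Omega|$, which makes the specific choices of $h$ below admissible.

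For \eqref{evriesz-DirichletLaplacian15_v30-NEW} I would feed $\phi_h$ into \eqref{Riesz-mean-ineq-DirichletLaplacian}: since $\|\phi_h\|_\infty=1$ we have $\|\phi_hu_j\|_2^2\le1$, so the left-hand side is $\le\sum_j(z-\lambda_j)_+$. For the right-hand side I would use the elementary tangent-line bound $(z-y)_+^{d/2+1}\ge z^{d/2+1}-\tfrac{d+2}{2}z^{d/2}y$ for $y,z\ge0$ (convexity of $y\mapsto(z-y)^{d/2+1}$ on $[0,z]$, and the right side is $\le0$ when $y>z$) with $y=\|\nabla\phi_h\|_2^2/\|\phi_h\|_2^2$; multiplying through by $\|\phi_h\|_2^2$ and inserting the two estimates above gives
\[
\|\phi_h\|_2^2\Bigl(z-\tfrac{\|\nabla\phi_h\|_2^2}{\|\phi_h\|_2^2}\Bigr)_+^{d/2+1}\ge|\Omega|\,z^{d/2+1}-|\partial\Omega|\,z^{d/2}\Bigl(hz+\tfrac{d+2}{2h}\Bigr).
\]
The choice $h=\sqrt{(d+2)/(2z)}$ minimises $hz+\tfrac{d+2}{2h}$ to $\sqrt{2(d+2)}\,\sqrt z$, and this $h$ satisfies $h\le r_\Omega$ exactly when $z\ge(d+2)/(2r_\Omega^2)$, which is the stated threshold. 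Since $\tfrac{2}{d+2}\sqrt{2(d+2)}=2\sqrt{2/(d+2)}$ and $(2\pi)^{-d}\omega_d$ is the common prefactor in \eqref{Riesz-mean-ineq-DirichletLaplacian}, this is precisely \eqref{evriesz-DirichletLaplacian15_v30-NEW}.

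For \eqref{evsums-DirichletLaplacian15_v30-NEW} I would use \eqref{evsums-DirichletLaplacian1} with $\phi=\phi_h$; as $\|\phi_h\|_\infty=1$ one has $\rho(\phi_h)=\|\phi_h\|_2^2/|\Omega|$, and the geometric estimates reduce matters to bounding, over $s:=h|\partial\Omega|/|\Omega|$, the quantity $\tfrac{P^2}{s(1-s)}+a\,(1-s)^{-2/d}$, where $P:=|\partial\Omega|/|\Omega|$ and $a:=\tfrac{d}{d+2}C_d(k/|\Omega|)^{2/d}$. The concavity bound $(1-s)^{1-2/d}\le1-(1-2/d)s$ (valid since $0\le1-2/d<1$ for $d\ge2$) makes this $\le a+\tfrac{1}{1-s}\bigl(\tfrac{2as}{d}+\tfrac{P^2}{s}\bigr)$, and with $b:=2\sqrt{2a/d}\,P=2\bigl(\tfrac{2}{d+2}C_d\bigr)^{1/2}(k/|\Omega|)^{1/d}P$ the bracketed term has minimum $b$ (AM--GM), attained at $s_\ast:=2P^2/b$. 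A short case split then finishes: if $b\ge4P^2$ then $s_\ast\le\tfrac12$ is admissible and $a+\tfrac{b}{1-s_\ast}=a+\tfrac{b^2}{b-2P^2}\le a+b+4P^2$, the last step being equivalent to $b\ge4P^2$; if $b<4P^2$ take $s=\tfrac12$, i.e. $h=|\Omega|/(2|\partial\Omega|)\le r_\Omega/2$, to get the bound $a+\tfrac{2a}{d}+4P^2$, and note that $b<4P^2$ forces $\sqrt{2a/d}<2P$, hence $\tfrac{2a}{d}<b$, so again $\le a+b+4P^2$. Finally \eqref{planar_convex_lambda12} is immediate from the ordinary min--max: $\lambda_1\le\|\nabla\phi_h\|_2^2/\|\phi_h\|_2^2\le\frac{|\partial\Omega|/h}{|\Omega|-h|\partial\Omega|}$, and the right-hand side is minimised at $h=|\Omega|/(2|\partial\Omega|)$ (admissible since $|\Omega|\le r_\Omega|\partial\Omega|$), giving $\lambda_1\le4|\partial\Omega|^2/|\Omega|^2$.

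The steps I expect to need the most care are the two convex-geometry inputs — that $\mathcal H^{d-1}(\partial\Omega_s)\le|\partial\Omega|$, from which both $|\Omega\setminus\Omega_h|\le h|\partial\Omega|$ and $|\Omega|\le r_\Omega|\partial\Omega|$ follow — and the bookkeeping behind the non-sharp constant $4$ in the remainder of \eqref{evsums-DirichletLaplacian15_v30-NEW}: that constant is an artefact of the crude estimates $\|\phi_h\|_2^2\ge|\Omega|-h|\partial\Omega|$ and $(1-s)^{1-2/d}\le1-(1-2/d)s$, and checking that these nevertheless close the argument \emph{uniformly in} $k$ is the one genuinely fiddly point, handled by the case split above.
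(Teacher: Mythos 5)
Your proposal follows essentially the same path as the paper's own proof: the same test function $\phi_h=\min\{\delta/h,1\}$, the same convex-geometry inputs ($|\omega_h|\le h|\partial\Omega|$ and $r_\Omega\ge|\Omega|/|\partial\Omega|$ via monotonicity of perimeter along inner parallel bodies), the same Bernoulli/tangent-line bound on $(z-y)_+^{d/2+1}$ for the Riesz-mean inequality, and the same concavity estimate on $\rho(\phi)^{-2/d}$ for the averaged inequality, so the intermediate bounds coincide term-for-term with the paper's \eqref{ineq-5}--\eqref{ineq-7} and \eqref{evsums-DirichletLaplacian12-NEW}--\eqref{evsums-DirichletLaplacian13-NEW-2}. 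The only difference is cosmetic bookkeeping at the end of the averaged estimate: the paper plugs in the exact minimizer $X_0$ from \eqref{optX0} and then uses $1+\sqrt{1+c}\le2+\sqrt{c}$, while you evaluate at the AM--GM point $s_*$ and close with a case split on $b\gtrless4P^2$ — both yield \eqref{evsums-DirichletLaplacian15_v30-NEW} and the bound for $\lambda_1$ in the same way.
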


\begin{rem}
We can compare the bound \eqref{evriesz-DirichletLaplacian15_v30-NEW} to the well-known asymptotic expansion
\begin{equation*}
  \sum_{j}(z-\lambda_j)_{+}\sim \frac{2}{d+2}\,(2\pi)^{-d}\omega_d |\Omega|z^{\frac{d}{2}+1}-\frac{1}{4}\frac{2}{d+1}\,(2\pi)^{1-d}\omega_{d-1} |\partial\Omega|z^{\frac{d}{2}+\frac{1}{2}}
\end{equation*}
as $z$ tends to infinity. We note that the ratio
\begin{equation*}
  \frac{2\sqrt{\frac{2}{d+2}}(2\pi)^{-d}\omega_d }{\frac{1}{4}\,(2\pi)^{1-d}\omega_{d-1}}
\end{equation*}
is an increasing function of $d$ and tends to $\displaystyle 4\pi^{-1/2}$ as $d$ tends to infinity.  Therefore
\begin{equation*}
  \frac{3\sqrt{2}}{2}\leq \frac{2\sqrt{\frac{2}{d+2}}(2\pi)^{-d}\omega_d }{\frac{1}{4}\frac{2}{d+1}\,(2\pi)^{1-d}\omega_{d-1}}\leq  4\pi^{-1/2},
\end{equation*}
or, approximately,
\begin{equation*}
  2.121\leq \frac{2\sqrt{\frac{2}{d+2}}(2\pi)^{-d}\omega_d }{\frac{1}{4}\frac{2}{d+1}\,(2\pi)^{1-d}\omega_{d-1}}\leq  2.257.
\end{equation*}
\end{rem}

\begin{rem}
Consider \eqref{evsums-DirichletLaplacian15_v30-NEW} for the cartesian product $\Omega=\Omega'\times (0,L)$ with $\Omega'\subset\mathbb{R}^{d-1}$ convex, as $L$ goes to infinity. Let $\lambda_1(\Omega'), \lambda_2(\Omega')$ denote the first two Dirichlet eigenvalues of $\Omega'$. The low lying Dirichlet eigenvalues of $\Omega$, $\lambda_j$, are given by
\begin{equation*}
  \lambda_j=\lambda_1(\Omega')+\frac{\pi^2j^2}{L^2}
\end{equation*}
provided $\displaystyle \frac{\pi^2j^2}{L^2}< \lambda_2(\Omega')-\lambda_1(\Omega')$. Let $k$ be any index such that this inequality holds and define $\displaystyle\kappa=\frac{k}{L|\Omega'|}$. 
We keep $\kappa$ fixed when $L$ tends to infinity. Then, from inequality \eqref{evsums-DirichletLaplacian15_v30-NEW} we obtain, as $L\rightarrow+\infty$
\begin{multline}\label{ineq-8}
 \lambda_1(\Omega')\leq\frac{1}{k}\sum_{j=1}^k\lambda_j= \lambda_1(\Omega')+\frac{\pi^2|\Omega'|^2\kappa^2}{3}\\
\leq \frac{d}{d+2}\,C_d\kappa^{\frac{2}{d}}+\frac{2 |\partial\Omega'|}{|\Omega'|}\left(\frac{2}{d+2}C_d\right)^{\frac{1}{2}}\kappa^{\frac{1}{d}}+\frac{4|\partial\Omega'|^2}{|\Omega'|^2}.
\end{multline}
This bound shows in particular that semiclassical upper bounds of the form
\begin{equation*}
  \frac{1}{k}\sum_{j=1}^{k}\lambda_j\leq Ak^{2/d}+Bk^{1/d}
\end{equation*}
cannot hold for all $k\geq 1$. Hence an additive constant in \eqref{evsums-DirichletLaplacian15_v30-NEW} is necessary. In the case of a rectangle $\Omega=(0,1)\times(0,L)$, inequality \eqref{ineq-8} reads $\pi^2\leq 16$.

\end{rem}

It is worth mentioning here a paper by Larson \cite{larson}, where the author considers  improved upper estimates for Riesz means $R_{\sigma}$ with $\sigma\geq 3/2$ for convex domains, containing a negative second term which depends only on the measure of the boundary of the domain.
For a certain range of $k$ such bounds
also imply improvements of the Li-Yau lower bounds. We remark that Theorem \ref{thm_convex_first_eigenvalues} reverses the Li-Yau inequalities and presents a positive correction term which, again, depends only on the measure of the boundary, hence complementing the results in \cite{larson} (see also \cite{frank3}).




\subsection{Bounds for various choices of \texorpdfstring{$\phi$}{phi} approximating the characteristic function of regular domains}\label{sec:1.5.2}

The formula \eqref{evsums-DirichletLaplacian1} with $\phi=1_{\Omega}$
would be a ``reverse Berezin-Li-Yau inequality.''  Clearly such an inequality does not hold and
correspondingly we cannot use $\phi\equiv 1$ in \eqref{evsums-DirichletLaplacian1}. Yet the form of inequality \eqref{evsums-DirichletLaplacian1} suggests choosing $\phi$ to be some function in $H^1_0(\Omega)\cap L^{\infty}(\Omega)$ which approximates the constant function $1$.

In this subsection (as well as in Subsections \ref{sec:1.5.3} and \ref{sec:1.5.4}) we present explicit estimates for averages of eigenvalues obtained by suitable choices of cut-off functions $\phi$ in \eqref{evsums-DirichletLaplacian1} for domains satisfying certain regularity properties. Analogous results clearly apply to 
Riesz means by the use of the same cut-off functions in \eqref{Riesz-mean-ineq-DirichletLaplacian}, but for the sake of brevity we shall omit such details.

In order to formulate the main results of this subsection we introduce a few preliminaries.

For $x\in\mathbb R^d$ we denote by $\delta(x)$ the function $\delta(x):=\text{dist}(x,\partial\Omega)$. Let $h>0$ and let $\omega_h\subset\Omega$ be defined by
\begin{equation}\label{tubular_n}
\omega_h:=\left\{x\in\Omega:\delta(x)\leq h\right\}.
\end{equation}
We note that if $h\geq r_{\Omega}$ then $\omega_h=\Omega$ (here $\rho_{\Omega}$ is the inradius of $\Omega$, see \eqref{inradius}).

A suitable function $\phi\in H^1_0(\Omega)$ can be defined by setting $\phi\equiv 1$ on $\Omega\setminus\overline\omega_h$ and then extended to a Lipschitz continuous function in $\Omega$ such that $\phi_{|_{\partial\Omega}}=0$ and $\|\phi\|_{\infty}\leq 1$ in $\Omega$. Then one expects that $|\nabla\phi|\leq 1/h$ a.e. in $\Omega$ and that $\|\phi\|_2^2\sim |\Omega|$, $\|\nabla\phi\|_2^2\sim |\omega_h|/h^2$ as $h\rightarrow 0$. In view of Remark \ref{rem_test} one would then take $h\sim k^{-1/d}$ (for $k$ sufficiently large) in order to obtain asymptotically sharp estimates. It now clear that we need information on the rate at which $|\omega_h|$ goes to zero with respect to $h$, and this is strictly related to the regularity of the boundary.

We introduce the class of domains $\mathcal S$ defined as follows:
\begin{equation*}
\mathcal S:=\left\{\Omega\subset\mathbb R^d{\rm\ bounded\ }:\lim_{h\rightarrow 0^+}\frac{|\omega_h|}{h}=|\partial\Omega|\right\}.
\end{equation*}
The class $\mathcal S$ is related to the notion of {\it outer Minkowski content} (refer to Definition \ref{defn} for more details).
Clearly, if the boundary $\partial\Omega$ is sufficiently smooth, then $\Omega$ belongs to $\mathcal S$. This is the case, for example, of Lipschitz domains (see Proposition \ref{lipschitzprop}).

We can state now the following theorem.

\begin{theorem}\label{main_thm_evsums-DirichletLaplacian}
Let $\Omega\in\mathcal S$. For $k\geq |\Omega|r_{\Omega}^{-d}\left(\frac{d+2}{2C_d}\right)^{d/2}$.  Then
\begin{equation}\label{evsums-DirichletLaplacian14}
\frac1{k}\sum_{j=1}^k\lambda_{j}
\leq \frac{d}{d+2}\,C_d\left(\frac{k}{|\Omega|}\right)^{\frac{2}{d}}\\+2\left(\frac{2 C_d}{d+2}\right)^{\frac{1}{2}}\frac{|\partial\Omega|}{|\Omega|}\left(\frac{k}{|\Omega|}\right)^{\frac{1}{d}}+R(k),
\end{equation}
where $R(k)=o(k^{\frac{1}{d}})$ and depends explicitly on $k$, $d$, $|\Omega|$, $|\partial\Omega|$ and $|\omega_{h(k)}|$, with $h(k)=(d+2)^{1/2}(2C_d)^{-1/2}|\Omega|^{1/d}k^{-1/d}$.
\end{theorem}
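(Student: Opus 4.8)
The plan is to apply inequality \eqref{evsums-DirichletLaplacian1} of Theorem~\ref{dirichlet_laplacian_thm_general} to the explicit ``ramp'' cut-off
\[
\phi(x) = \phi_h(x) := \min\left\{\frac{\delta(x)}{h},\,1\right\}, \qquad h = h(k) := \left(\frac{d+2}{2C_d}\right)^{1/2}|\Omega|^{1/d}k^{-1/d},
\]
with $\delta(x)=\mathrm{dist}(x,\partial\Omega)$ as in Subsection~\ref{sec:1.5.2}. First I would check that $\phi_h$ is admissible: it is $(1/h)$-Lipschitz because $\delta$ is $1$-Lipschitz, it vanishes on $\partial\Omega$, and a standard truncation argument (for $0<\varepsilon<1$ the function $(\phi_h-\varepsilon)_+$ has support compactly contained in $\Omega$, hence lies in $H^1_0$, and $(\phi_h-\varepsilon)_+\to\phi_h$ in $H^1$ as $\varepsilon\to0$) shows $\phi_h\in H^1_0(\Omega)\cap L^\infty(\Omega)$. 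The hypothesis $k\ge |\Omega|r_\Omega^{-d}\left(\frac{d+2}{2C_d}\right)^{d/2}$ is exactly equivalent to $h(k)\le r_\Omega$; this guarantees $\omega_{h(k)}\subseteq\Omega$ and, since $\max_\Omega\delta=r_\Omega\ge h$, that $\|\phi_h\|_\infty=1$.

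Next I would estimate the three ingredients of \eqref{evsums-DirichletLaplacian1} in terms of the tubular set $\omega_h$ of \eqref{tubular_n}. Since $\phi_h\equiv1$ on $\Omega\setminus\omega_h$ and $0\le\phi_h\le1$, one has $|\Omega|-|\omega_h|\le\|\phi_h\|_2^2\le|\Omega|$, hence $\rho(\phi_h)=\|\phi_h\|_2^2/|\Omega|\ge 1-|\omega_h|/|\Omega|$. Since $\nabla\phi_h=0$ a.e.\ on $\{\delta>h\}$ while $|\nabla\phi_h|\le1/h$ on $\omega_h$, one has $\|\nabla\phi_h\|_2^2\le|\omega_h|/h^2$, and therefore $\|\nabla\phi_h\|_2^2/\|\phi_h\|_2^2\le|\omega_h|/\bigl(h^2(|\Omega|-|\omega_h|)\bigr)$. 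Substituting these into \eqref{evsums-DirichletLaplacian1} with $h=h(k)$ gives
\[
\frac1k\sum_{j=1}^k\lambda_j\le\frac{|\omega_{h(k)}|}{h(k)^2\bigl(|\Omega|-|\omega_{h(k)}|\bigr)}+\frac{d}{d+2}C_d\left(\frac{k}{|\Omega|}\right)^{2/d}\left(1-\frac{|\omega_{h(k)}|}{|\Omega|}\right)^{-2/d}.
\]
I would then simply \emph{define} $R(k)$ to be the right-hand side of this inequality minus the first two terms on the right of \eqref{evsums-DirichletLaplacian14}; then \eqref{evsums-DirichletLaplacian14} holds by construction and $R(k)$ manifestly depends only on $k$, $d$, $|\Omega|$, $|\partial\Omega|$ and $|\omega_{h(k)}|$.

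The only substantive point left is to verify $R(k)=o(k^{1/d})$, and this is where the hypothesis $\Omega\in\mathcal S$ enters. Since $h(k)\to0$ as $k\to\infty$, the definition of $\mathcal S$ gives $|\omega_{h(k)}|=|\partial\Omega|\,h(k)\,(1+o(1))$, and in particular $|\omega_{h(k)}|\to0$. Inserting the explicit value of $h(k)$ one finds $\tfrac{1}{h(k)}=\bigl(\tfrac{2C_d}{d+2}\bigr)^{1/2}\bigl(\tfrac{k}{|\Omega|}\bigr)^{1/d}$, so the first term above equals $\bigl(\tfrac{2C_d}{d+2}\bigr)^{1/2}\tfrac{|\partial\Omega|}{|\Omega|}\bigl(\tfrac{k}{|\Omega|}\bigr)^{1/d}(1+o(1))$; and expanding $(1-x)^{-2/d}-1=\tfrac2d x+O(x^2)$ at $x=|\omega_{h(k)}|/|\Omega|$ shows that the second term equals $\tfrac{d}{d+2}C_d\bigl(\tfrac{k}{|\Omega|}\bigr)^{2/d}+\bigl(\tfrac{2C_d}{d+2}\bigr)^{1/2}\tfrac{|\partial\Omega|}{|\Omega|}\bigl(\tfrac{k}{|\Omega|}\bigr)^{1/d}(1+o(1))$, the $O(x^2)$ contribution being $O(1)=o(k^{1/d})$. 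Adding these and subtracting the first two terms of \eqref{evsums-DirichletLaplacian14} leaves exactly $o(k^{1/d})$.

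I expect the main obstacle — really the only nontrivial step — to be this last asymptotic matching: the second term of \eqref{evsums-DirichletLaplacian14} receives equal contributions from the Rayleigh-quotient term $\|\nabla\phi_h\|_2^2/\|\phi_h\|_2^2$ and from the factor $\rho(\phi_h)^{-2/d}$, and the precise constant in $h(k)$ is dictated by the requirement that the $k^{1/d}$-parts of these two error sources coincide, so that neither dominates and they sum to $2\bigl(\tfrac{2C_d}{d+2}\bigr)^{1/2}\tfrac{|\partial\Omega|}{|\Omega|}\bigl(\tfrac{k}{|\Omega|}\bigr)^{1/d}$ with no surviving first-order error. Once $\phi_h$ and $h(k)$ are fixed, this bookkeeping rests only on the Minkowski-content property defining $\mathcal S$ and an elementary Taylor expansion; everything else is routine.
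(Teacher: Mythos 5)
Your proof is correct and follows essentially the same route as the paper: apply \eqref{evsums-DirichletLaplacian1} to the ramp cutoff $\phi_h=\min\{\delta/h,1\}$, bound $\|\phi_h\|_2^2\geq|\Omega|-|\omega_h|$ and $\|\nabla\phi_h\|_2^2\leq|\omega_h|/h^2$, choose $h(k)\sim k^{-1/d}$ so the two error contributions balance, and invoke $\Omega\in\mathcal S$ to conclude $R(k)=o(k^{1/d})$. The one small divergence is bookkeeping: the paper first linearizes $\rho(\phi)^{-2/d}\leq 1+\tfrac{2}{d}\bigl(\rho(\phi)^{-1}-1\bigr)$ (concavity for $d\geq 2$) before substituting $\phi_h$, which produces a closed-form rational $R_k(h)$ in \eqref{remainderR} that is reused verbatim in the later $C^2$ and planar theorems; you instead carry the exact factor $(1-|\omega_h|/|\Omega|)^{-2/d}$ through and Taylor-expand at the end, which is equally valid for this theorem but gives an $R(k)$ less convenient for the downstream explicit computations.
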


\begin{rem}
If $k<|\Omega|r_{\Omega}^{-d}\left(\frac{d+2}{2C_d}\right)^{d/2}$ the upper bound \eqref{evsums-DirichletLaplacian15_v2} clearly still holds.  (See also \eqref{simple_heat_bound_2}.)
\end{rem}

\begin{rem}
It follows from the proof of Theorem \ref{main_thm_evsums-DirichletLaplacian} that if $\Omega$ is such that $|\omega_h|-h|\partial\Omega|=O(h^2)$ as $h\rightarrow 0^+$, then the remainder $R(k)$ in \eqref{evsums-DirichletLaplacian14} satisfies $|R(k)|\leq C$ for some positive constant $C$. We shall discuss examples where this is in fact the case in the next subsection.
\end{rem}

\begin{rem}
We note that the second term in the upper bound \eqref{evsums-DirichletLaplacian14} coincides with the second term of the semiclassical asymptotic expression for the average of Dirichlet eigenvalues \eqref{weyl_D}, up to a multiplicative dimensional constant. In particular one can easily check that
$$
\frac{3}{\sqrt{2}}\leq\frac{2\left(\frac{2C_d}{d+2}\right)^{\frac{1}{2}}}{\left(\frac{1}{2(d+1)}\frac{C_d^{\frac{d+1}{2}}}{C_ {d-1}^{\frac{d-1}{2}}}\right)}\leq\frac{4}{\sqrt{\pi}}
$$
and that the right side of the inequality is the limit of the ratio as $d\rightarrow +\infty$.
\end{rem}

We next consider the partition function. We have the following corollary.

\begin{corollary}\label{thm_bounds_part_func_regular}
Let $\Omega\in\mathcal S$. Then for all $0<t\leq r_{\Omega}^2$
\begin{equation*}
\sum_{j=1}^{\infty}e^{-\lambda_jt}\geq \frac{|\Omega|}{(4\pi t)^{\frac{d}{2}}}-\frac{|\partial\Omega|t^{\frac{1}{2}}}{(4\pi t)^{\frac{d}{2}}}+R(t),
\end{equation*}
where 
\begin{equation*}
R(t):=2\frac{t^{\frac{1}{2}}|\partial\Omega|-|\omega_{t^{\frac{1}{2}}}|}{(4\pi t)^{\frac{d}{2}}}
\end{equation*}
and $R(t)=o(t^{\frac{1-d}{2}})$ as $t\rightarrow 0^+$.
\end{corollary}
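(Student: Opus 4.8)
The plan is to feed a single explicit test function into the heat-trace lower bound \eqref{part-fct-estimate-small-times} of Corollary \ref{dirichlet_bounds_spec_fcn}, namely the Lipschitz truncation of the distance to $\partial\Omega$ with a cut-off width tuned to $t$. This is exactly the heuristic recorded in Remark \ref{rem_test} and in the paragraph opening Subsection \ref{sec:1.5.2}, now applied to the partition function rather than to eigenvalue sums.

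First I would fix $0<t\le r_\Omega^2$ and set $h:=t^{1/2}$, so that $0<h\le r_\Omega$. Writing $\delta(x):=\mathrm{dist}(x,\partial\Omega)$, I take
\[
\phi(x):=\min\{\delta(x)/h,\ 1\}.
\]
Since $\delta$ is $1$-Lipschitz on $\overline\Omega$, the function $\phi$ is Lipschitz, $0\le\phi\le1$, $\phi\equiv0$ on $\partial\Omega$, and $\phi\equiv1$ on $\Omega\setminus\overline{\omega_h}$. Approximating $\phi$ by $(\phi-\varepsilon)_+$ (Lipschitz with compact support in $\Omega$) and letting $\varepsilon\to0^+$ shows $\phi\in H^1_0(\Omega)\cap L^\infty(\Omega)$, so $\phi$ is admissible in Corollary \ref{dirichlet_bounds_spec_fcn}; moreover $h\le r_\Omega=\max_{x\in\Omega}\delta(x)$ guarantees that $\phi$ attains the value $1$, hence $\|\phi\|_\infty=1$. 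This is the only place the hypothesis $t\le r_\Omega^2$ is used, and it is precisely what keeps the leading term $|\Omega|/(4\pi t)^{d/2}$ intact.

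Next I would record two elementary estimates. Because $\nabla\phi$ vanishes off $\omega_h$ and $|\nabla\phi|=|\nabla\delta|/h\le1/h$ a.e. there, $\|\nabla\phi\|_2^2\,t\le(|\omega_h|/h^2)\,t=|\omega_{t^{1/2}}|$. Because $0\le\phi\le1$ with $\phi\equiv1$ on $\Omega\setminus\omega_h$, $|\Omega|-\|\phi\|_2^2\le|\omega_h|=|\omega_{t^{1/2}}|$. Substituting these, together with $\|\phi\|_\infty^2=1$, into \eqref{part-fct-estimate-small-times} gives
\[
\sum_{j=1}^{\infty}e^{-\lambda_jt}\ \ge\ \frac{|\Omega|}{(4\pi t)^{d/2}}-\frac{\|\nabla\phi\|_2^2\,t+|\Omega|-\|\phi\|_2^2}{(4\pi t)^{d/2}}\ \ge\ \frac{|\Omega|-2|\omega_{t^{1/2}}|}{(4\pi t)^{d/2}}\,,
\]
and writing $2|\omega_{t^{1/2}}|=2t^{1/2}|\partial\Omega|-2\bigl(t^{1/2}|\partial\Omega|-|\omega_{t^{1/2}}|\bigr)$ recasts the right-hand side as the announced two-term expression plus the remainder $R(t)$ in the statement. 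For the order of $R(t)$ I would merely factor
\[
R(t)=\frac{2}{(4\pi)^{d/2}}\,t^{(1-d)/2}\Bigl(|\partial\Omega|-\tfrac{|\omega_{t^{1/2}}|}{t^{1/2}}\Bigr)
\]
and invoke the defining property of the class $\mathcal S$, namely $|\omega_h|/h\to|\partial\Omega|$ as $h\to0^+$; the bracket is then $o(1)$, so $R(t)=o(t^{(1-d)/2})$ as $t\to0^+$.

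I do not expect a genuine obstacle: the entire content is the choice of $\phi$ and the two one-line norm bounds. The only points deserving care are (i) the membership $\phi\in H^1_0(\Omega)$ for a bounded domain with no assumed boundary regularity, handled by the truncation $(\phi-\varepsilon)_+$; (ii) the use of $t\le r_\Omega^2$ to secure $\|\phi\|_\infty=1$; and (iii) the choice $h=t^{1/2}$, which is exactly what makes the two contributions $\|\nabla\phi\|_2^2\,t$ and $|\Omega|-\|\phi\|_2^2$ share the common bound $|\omega_{t^{1/2}}|$, and hence produces the clean closed form of the remainder.
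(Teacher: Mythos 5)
Your argument is correct and follows exactly the paper's own proof: apply \eqref{part-fct-estimate-small-times} with the piecewise-linear cut-off $\phi_h(x)=\min\{\delta(x)/h,1\}$, choose $h=t^{1/2}$ so that $\|\nabla\phi_h\|_2^2\,t=|\omega_{t^{1/2}}|$ and $|\Omega|-\|\phi_h\|_2^2\le|\omega_{t^{1/2}}|$ share the common bound, and then split $|\omega_{t^{1/2}}|$ against $t^{1/2}|\partial\Omega|$.

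One arithmetic point deserves flagging, though, because it concerns the final sentence of your write-up. You correctly reach
\[
\sum_{j=1}^{\infty}e^{-\lambda_jt}\ \ge\ \frac{|\Omega|-2|\omega_{t^{1/2}}|}{(4\pi t)^{d/2}}
=\frac{|\Omega|}{(4\pi t)^{d/2}}-\frac{2\,|\partial\Omega|\,t^{1/2}}{(4\pi t)^{d/2}}+R(t),
\]
i.e.\ the middle coefficient that falls out of your recasting is $2$, not $1$. So the display does \emph{not} literally reproduce the corollary as printed, whose middle term is $-|\partial\Omega|t^{1/2}/(4\pi t)^{d/2}$. The paper's own proof, on setting $h=\sqrt t$ in the three-line chain of inequalities, likewise produces the coefficient $2$ in the second term; the printed statement therefore appears to carry a typo (the same lower bound with the second term doubled, or equivalently with $R(t):=\bigl(t^{1/2}|\partial\Omega|-2|\omega_{t^{1/2}}|\bigr)/(4\pi t)^{d/2}$, is what the argument supports). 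You should not assert that the recasting yields ``the announced two-term expression''; either adjust the coefficient or note the discrepancy with the stated formula. The remainder estimate $R(t)=o(t^{(1-d)/2})$ is unaffected, since it rests only on $|\omega_h|/h\to|\partial\Omega|$ for $\Omega\in\mathcal S$, as you say.
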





\subsection{Estimates for domains of class \texorpdfstring{$C^2$}{C2}}\label{sec:1.5.3}

In the case of domains of class $C^2$ we can provide bounds that are more explicit than those contained in Theorem \ref{main_thm_evsums-DirichletLaplacian}. In fact, in the case of $C^2$ domains (which belong to the class $\mathcal S$), for sufficiently small $h$ it is possible to write a closed formula for the volume of the tube $|\omega_h|$, which has the form of a polynomial in $h$ whose coefficients are given by suitable integrals of the principal curvatures of the boundary. This translates into a more explicit formula for the remainder $R(k)$ in \eqref{main_thm_evsums-DirichletLaplacian} (which will now depend on integrals of the principal curvatures), which further implies a uniform estimate of $R(k)$ with respect to $k$.

We need to introduce some notation first. Let $\bar h$ be defined by
\begin{equation}\label{barh}
\bar h:=\sup\left\{h>0:{\rm every\  point\  in\ } \omega_{h} {\rm\ has\ a\ unique\ nearest\ point\ on\ } \partial\Omega\right\}.
\end{equation}
It is well-known (see also Theorem \ref{tubular0}) that if $\Omega$ is a domain of class $C^2$, such
an $\bar h$ exists and is strictly positive.

We are ready to state the main theorem of this subsection.
\begin{theorem}\label{smooth_1}
Let $\Omega$ be a bounded domain of class $C^2$ in $\mathbb R^d$. Then Theorem \ref{main_thm_evsums-DirichletLaplacian} holds. Moreover:
\begin{itemize}
\item[(i)] There exists $C>0$ which depends only on $\Omega$ and $d$ such that \eqref{evsums-DirichletLaplacian14} holds with $|R(k)|\leq C$.
\item[(ii)] For $k\geq|\Omega|\bar h^{-d}\left(\frac{d+2}{2C_d}\right)^{\frac{d}{2}}$, we have the following explicit formula for the remainder in \eqref{evsums-DirichletLaplacian14}:
\small
\begin{multline}\label{remainder-f1}
R(k)\\
=\frac{2|\partial\Omega|^2+2\left(h(k)|\partial\Omega|+|\Omega|\right)\frac{1}{d}\sum_{j=2}^d\binom{d}{j}(-1)^{j-1}h(k)^{j-2}\int_{\partial\Omega}\mathcal H(x)^{j-1}d\sigma(x)}{|\Omega|^2-h(k)|\Omega||\partial\Omega|-\frac{|\Omega|}{d}\sum_{j=2}^d\binom{d}{j}(-1)^{j-1}h(k)^j\int_{\partial\Omega}\mathcal H(x)^{j-1}d\sigma(x)},\\
\end{multline}
\normalsize
where $\mathcal H(x)$ denotes the mean curvature of $\partial\Omega$ at a point $x$ and $h(k)$ is defined by 
$$
h(k):=\left(\frac{2}{d+2}C_d\right)^{-\frac{1}{2}}\left(\frac{k}{|\Omega|}\right)^{-\frac{1}{d}}.
$$
In particular
\begin{equation}\label{limit_r1}
\lim_{k\rightarrow +\infty}R(k)=2\frac{|\partial\Omega|^2}{|\Omega|^2}-\frac{d-1}{|\Omega|}\int_{\partial\Omega}\mathcal H(x)d\sigma(x).
\end{equation}
\end{itemize} 
\end{theorem}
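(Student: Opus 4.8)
The plan is to apply Theorem \ref{main_thm_evsums-DirichletLaplacian}, whose remainder $R(k)$ is known (from its proof) to depend explicitly on $k$, $d$, $|\Omega|$, $|\partial\Omega|$ and the tube volume $|\omega_{h(k)}|$, and then to substitute the closed formula for $|\omega_h|$ available in the $C^2$ case when $h<\bar h$. First I would invoke the tubular neighbourhood theorem (Theorem \ref{tubular0}, cited in the text) to get, for $0<h<\bar h$, the Weyl-tube polynomial expansion
\begin{equation*}
|\omega_h| = h|\partial\Omega| - \frac{1}{d}\sum_{j=2}^{d}\binom{d}{j}(-1)^{j-1}h^{j}\int_{\partial\Omega}\mathcal H(x)^{j-1}\,d\sigma(x),
\end{equation*}
where $\mathcal H$ is the mean curvature (normalised so that the coefficient of $h^2$ involves $\int_{\partial\Omega}\mathcal H$). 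Since $C^2$ domains lie in $\mathcal S$, Theorem \ref{main_thm_evsums-DirichletLaplacian} applies verbatim, giving the two leading terms of \eqref{evsums-DirichletLaplacian14}. The task then reduces to rewriting $R(k)$ using the above polynomial.

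For part (ii), I would take $h=h(k)=(\tfrac{2}{d+2}C_d)^{-1/2}(k/|\Omega|)^{-1/d}$, which is the value forced by the optimisation in the proof of Theorem \ref{main_thm_evsums-DirichletLaplacian}, and note that the hypothesis $k\geq |\Omega|\bar h^{-d}(\tfrac{d+2}{2C_d})^{d/2}$ is exactly the condition $h(k)\leq\bar h$, so that the tube formula is valid. Tracking through the proof of Theorem \ref{main_thm_evsums-DirichletLaplacian} — where $R(k)$ is built from the quantity $h(k)|\partial\Omega|-|\omega_{h(k)}|$ in the numerator and from $|\Omega|-$(lower-order tube terms) in the denominator of the error produced by $\rho(\phi)^{-2/d}$ and the $\|\nabla\phi\|_2^2$ correction — I would substitute the polynomial and collect terms to obtain \eqref{remainder-f1}. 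The key algebraic point is that $h(k)|\partial\Omega| - |\omega_{h(k)}| = \frac1d\sum_{j=2}^d\binom dj(-1)^{j-1}h(k)^j\int_{\partial\Omega}\mathcal H^{j-1}d\sigma$ is $O(h(k)^2)=O(k^{-2/d})$, so that after dividing by the appropriate power of $k^{1/d}$ (coming from the $(k/|\Omega|)^{1/d}$ scaling) the remainder is bounded; the limit \eqref{limit_r1} then follows by letting $k\to\infty$, where only the $j=2$ term survives in the numerator sum (contributing $-\tfrac{d-1}{|\Omega|}\int_{\partial\Omega}\mathcal H\,d\sigma$ after the binomial coefficient $\binom d2=\tfrac{d(d-1)}2$ and the factor $\tfrac1d$ and the remaining $(h(k)|\partial\Omega|+|\Omega|)\to|\Omega|$ are combined) together with the surviving $2|\partial\Omega|^2/|\Omega|^2$ from the leading numerator and denominator terms.

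For part (i), the uniform bound $|R(k)|\leq C$ for \emph{all} admissible $k$ (not only $k\geq|\Omega|\bar h^{-d}(\tfrac{d+2}{2C_d})^{d/2}$), I would argue in two regimes. For large $k$ the explicit formula \eqref{remainder-f1} is a ratio of polynomials in $h(k)\to 0$ with nonvanishing denominator limit $|\Omega|^2>0$, hence bounded. For the intermediate range $r_\Omega \geq h(k) > \bar h$ — i.e. $|\Omega|r_\Omega^{-d}(\tfrac{d+2}{2C_d})^{d/2}\leq k< |\Omega|\bar h^{-d}(\tfrac{d+2}{2C_d})^{d/2}$ — one still has $h(k)$ ranging over a compact interval $[\bar h, r_\Omega]$, so $|\omega_{h(k)}|\leq|\Omega|$ is trivially controlled and the expression for $R(k)$ coming directly from the proof of Theorem \ref{main_thm_evsums-DirichletLaplacian} (before specialising to the polynomial) is a continuous function of $k$ on a compact set, hence bounded. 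Taking $C$ to be the maximum of the two bounds finishes part (i).

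\textbf{Main obstacle.} The delicate step is the bookkeeping in transporting the abstract remainder from the proof of Theorem \ref{main_thm_evsums-DirichletLaplacian} into the precise rational expression \eqref{remainder-f1}: one must match exactly how the cutoff $\phi$ (with $\|\phi\|_2^2$, $\|\nabla\phi\|_2^2$, $\|\phi\|_\infty$ expressed through $|\omega_{h(k)}|$ and $h(k)$) feeds into $\rho(\phi)^{-2/d}$ and the additive $\|\nabla\phi\|_2^2/\|\phi\|_2^2$ term in \eqref{evsums-DirichletLaplacian1}, and then linearise/collect so that the two Weyl terms separate cleanly and everything else lands in $R(k)$ with the stated numerator and denominator. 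The curvature integrals enter only through $|\omega_h|$, so once the substitution is set up correctly the remaining work is careful but routine polynomial algebra.
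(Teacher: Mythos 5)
Your overall strategy --- substitute a tube-volume formula for $|\omega_{h(k)}|$ into the abstract remainder $R_k(h)$ of \eqref{remainderR}, set $h=h(k)$, and read off \eqref{remainder-f1} and then \eqref{limit_r1} --- is exactly the paper's, and you correctly identify the role of the hypothesis $k\geq|\Omega|\bar h^{-d}(\tfrac{d+2}{2C_d})^{d/2}$ as the condition $h(k)\leq\bar h$. However, the tube formula you invoke has two problems, one conceptual and one a sign error, and both are load-bearing.

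First, the formula
\[
|\omega_h| = h|\partial\Omega| - \frac{1}{d}\sum_{j=2}^{d}\binom{d}{j}(-1)^{j-1}h^{j}\int_{\partial\Omega}\mathcal H^{j-1}\,d\sigma
\]
is not the ``Weyl-tube polynomial expansion.'' The exact tube formula $|\omega_h| = \int_{\partial\Omega}\int_0^h\prod_{i=1}^{d-1}(1-t\kappa_i(s))\,dt\,d\sigma(s)$ produces integrals of the \emph{elementary symmetric polynomials} of the principal curvatures, which for $d\geq 3$ are not the same as integrals of powers of the \emph{mean} curvature. The key step that the paper uses --- and that your proposal is missing --- is the arithmetic--geometric mean inequality $\prod_{i=1}^{d-1}(1-t\kappa_i)\leq (1-t\mathcal H)^{d-1}$, valid inside the tubular neighbourhood because each factor is positive there; integrating this in $t$ exactly and using $\int_0^h(1-t\mathcal H)^{d-1}\,dt = \tfrac{1-(1-h\mathcal H)^d}{d\mathcal H}$ gives an \emph{upper bound} for $|\omega_h|$ with powers of $\mathcal H$. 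Since $R_k(h)$ in \eqref{remainderR} is increasing in $|\omega_h|$, plugging in this upper bound yields a valid $R(k)$ making \eqref{evsums-DirichletLaplacian14} true, which is how the paper justifies calling \eqref{remainder-f1} an ``explicit formula for the remainder.'' Your argument treats it as an identity, which it is not except when $d=2$.

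Second, and independently, your sign is wrong: the correct consequence of the AM--GM step is $|\omega_h|\leq h|\partial\Omega| + \tfrac{1}{d}\sum_{j=2}^d\binom dj(-1)^{j-1}h^j\int_{\partial\Omega}\mathcal H^{j-1}\,d\sigma$, i.e. with a plus in front of the sum, not a minus. A quick check on a disc of radius $R$ in $d=2$ shows $|\omega_h| = 2\pi Rh - \pi h^2 = h|\partial\Omega| - \tfrac{h^2}{2}\int_{\partial\Omega}\mathcal H$, whereas your formula gives $h|\partial\Omega| + \pi h^2$. If you carry your sign through $R_k(h)$, the $j=2$ term in \eqref{limit_r1} comes out with the wrong sign ($+\tfrac{d-1}{|\Omega|}\int_{\partial\Omega}\mathcal H\,d\sigma$ instead of $-$), so the numerical statement you would prove differs from the theorem. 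Your verbal description of the limiting contribution as ``$-\tfrac{d-1}{|\Omega|}\int_{\partial\Omega}\mathcal H\,d\sigma$'' is the correct target, but it does not follow from the formula you wrote; there is an unnoticed inconsistency between the two. The structure of your part (i) is fine (and essentially coincides with the paper's two-regime argument), but it too ultimately rests on controlling $|\omega_h|-h|\partial\Omega|$ by $O(h^2)$, which in the paper comes from the same AM--GM estimate that is missing from your proposal.
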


Estimates can be improved if some additional properties are satisfied. We say that a bounded domain of class $C^2$ in $\mathbb R^d$ is {\em mean convex} if $\mathcal H(s)\geq 0$ for all $s\in\partial\Omega$. We have the following corollary.
\begin{corollary}\label{smooth_convex}
Let $\Omega$ be a bounded domain of class $C^2$ in $\mathbb R^d$ which is mean convex. Then Theorem \ref{main_thm_evsums-DirichletLaplacian} holds. Moreover:
\begin{itemize}
\item[(i)] There exists $C>0$ which depends only on $\Omega$ and $d$ such that \eqref{evsums-DirichletLaplacian14} holds with $|R(k)|\leq C$.
\item[(ii)] For all  $k\geq|\Omega|\left(\frac{d+2}{2C_d}\cdot\frac{|\Omega|-2\bar h|\partial\Omega|}{\bar h^2|\Omega|}\right)^{\frac{d}{2}}$,
\begin{equation}\label{evsums-DirichletLaplacian15_v30-NEW-2}
  \frac{1}{k}\sum_{j=1}^{k}\lambda_j\leq \frac{d}{d+2}\,C_d\left(\frac{k}{|\Omega|}\right)^{\frac{2}{d}}+2\left(\frac{2}{d+2}\,C_d\right)^{\frac{1}{2}}\left(\frac{k}{|\Omega|}\right)^{\frac{1}{d}}\frac{|\partial \Omega|}{|\Omega|}+\frac{4|\partial \Omega|^2}{|\Omega|^2}.
	\end{equation}
\item[(iii)] If $\bar h\geq\frac{|\Omega|}{2|\partial\Omega|}$, then inequality \eqref{evsums-DirichletLaplacian15_v30-NEW-2} holds for all positive integers $k$ .

\end{itemize}
\end{corollary}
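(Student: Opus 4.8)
The plan is to derive Corollary \ref{smooth_convex} from Theorem \ref{smooth_1}, which applies because a mean convex domain of class $C^2$ is in particular a bounded domain of class $C^2$. Part (i) then requires nothing new: it is part (i) of Theorem \ref{smooth_1}. Part (iii) I would present as the degenerate form of (ii): if $\bar h\geq\frac{|\Omega|}{2|\partial\Omega|}$ then $|\Omega|-2\bar h|\partial\Omega|\leq0$, the lower bound on $k$ in (ii) is vacuous, and (ii) becomes the assertion of (iii). So everything reduces to proving \eqref{evsums-DirichletLaplacian15_v30-NEW-2} for those $k$ with $h(k)\leq h_*:=\bar h\bigl(|\Omega|/(|\Omega|-2\bar h|\partial\Omega|)\bigr)^{1/2}$ (when $\bar h<\frac{|\Omega|}{2|\partial\Omega|}$; for all $k$ otherwise), the hypothesis on $k$ in (ii) being exactly $h(k)\leq h_*$. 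Here $h(k)=\bigl(\tfrac{2C_d}{d+2}\bigr)^{-1/2}(k/|\Omega|)^{-1/d}$, as in Theorem \ref{smooth_1}.

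The single geometric input from mean convexity I would use is the estimate $|\omega_h|\leq h|\partial\Omega|$ for all $h\in(0,\bar h]$. To prove it, recall the tubular-neighbourhood formula (cf.\ Theorem \ref{tubular0}): for $h\leq\bar h$,
\[
|\omega_h|=\int_{\partial\Omega}\int_0^{h}\prod_{i=1}^{d-1}\bigl(1-t\,\kappa_i(x)\bigr)\,dt\,d\sigma(x),
\]
where $\kappa_1(x),\dots,\kappa_{d-1}(x)$ are the principal curvatures of $\partial\Omega$ at $x$ with respect to the inner normal. Since $1-t\kappa_i(x)>0$ for $t<\bar h$, the arithmetic--geometric mean inequality gives $\prod_{i=1}^{d-1}(1-t\kappa_i(x))\leq(1-t\,\mathcal H(x))^{d-1}$, and mean convexity ($\mathcal H\geq0$, so $0\leq1-t\mathcal H(x)\leq1$) makes this $\leq1$; integrating proves the claim. (The same argument, together with the fact that every point of $\Omega$ has a nearest point on $\partial\Omega$, in fact gives $|\omega_h|\leq h|\partial\Omega|$ for every $h>0$, but only $h\leq\bar h$ is needed.)

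Now fix $k$ in the admissible range, set $w:=\min\bigl\{h(k),\bar h,\tfrac{|\Omega|}{2|\partial\Omega|}\bigr\}$ (so $0<w\leq\bar h\leq r_\Omega$), and apply \eqref{evsums-DirichletLaplacian1} with the Lipschitz cut-off $\phi(x):=\min\{\delta(x)/w,1\}\in H^1_0(\Omega)\cap L^\infty(\Omega)$. Since $|\nabla\delta|=1$ a.e.\ in $\Omega$, one has $\|\phi\|_\infty=1$, $\|\nabla\phi\|_2^2=|\omega_w|/w^2\leq|\partial\Omega|/w$ and $\|\phi\|_2^2\geq|\Omega|-|\omega_w|\geq|\Omega|-w|\partial\Omega|$ by the bound above, hence $\rho(\phi)\geq(|\Omega|-w|\partial\Omega|)/|\Omega|$, and \eqref{evsums-DirichletLaplacian1} yields
\[
\frac1k\sum_{j=1}^{k}\lambda_j\ \leq\ \frac{|\partial\Omega|}{w\,(|\Omega|-w|\partial\Omega|)}+\frac{d}{d+2}\,C_d\Bigl(\frac{k}{|\Omega|}\Bigr)^{2/d}\Bigl(\frac{|\Omega|}{|\Omega|-w|\partial\Omega|}\Bigr)^{2/d}.
\]
Writing $\gamma:=w|\partial\Omega|/|\Omega|\in(0,\tfrac12]$ and $\theta:=w/h(k)\in(0,1]$, and using $(k/|\Omega|)^{2/d}=\tfrac{d+2}{2C_d}h(k)^{-2}$, the claim that this right-hand side is at most the right-hand side of \eqref{evsums-DirichletLaplacian15_v30-NEW-2} reduces (divide through by $|\partial\Omega|^2/|\Omega|^2$) to the scalar inequality
\[
\frac{1}{\gamma(1-\gamma)}+\frac{d\,\theta^2}{2\gamma^2}\bigl[(1-\gamma)^{-2/d}-1\bigr]\ \leq\ \frac{2\theta}{\gamma}+4 .
\]
For fixed $\gamma$ the left-hand side is an upward parabola in $\theta$, so it is enough to check this at the extreme admissible values of $\theta$. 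If $w=h(k)$ then $\theta=1$, and the inequality becomes elementary in $\gamma\in(0,\tfrac12]$, its only borderline case being $\gamma\to\tfrac12$, where it reads $d(2^{2/d}-1)\leq2$ — true for every $d\geq2$, with equality at $d=2$. If $w=\tfrac{|\Omega|}{2|\partial\Omega|}$ then $\gamma=\tfrac12$ and the inequality reads $d\,\theta(2^{2/d}-1)\leq2$, which holds since $\theta\leq1$. If $w=\bar h<\tfrac{|\Omega|}{2|\partial\Omega|}$ then $\gamma=\bar h|\partial\Omega|/|\Omega|$ and the smallest admissible $\theta$ is, by the hypothesis $h(k)\leq h_*$, equal to $\bar h/h_*=\sqrt{1-2\gamma}$; putting $\theta^2=1-2\gamma$ turns the inequality into an elementary one in $\gamma\in(0,\tfrac12)$, again tight in the limit $\gamma\to\tfrac12$ — and it is precisely this computation that pins down the threshold on $k$ in (ii). (For $k$ with $h(k)\leq\min\{\bar h,\tfrac{|\Omega|}{2|\partial\Omega|}\}$, i.e.\ the case $\theta=1$, the same conclusion may alternatively be read off \eqref{remainder-f1}: $\mathcal H\geq0$ makes the alternating sums in \eqref{remainder-f1} nonpositive, forcing its numerator $\leq2|\partial\Omega|^2$ and its denominator $\geq|\Omega|(|\Omega|-h(k)|\partial\Omega|)$, so $R(k)\leq\frac{2|\partial\Omega|^2}{|\Omega|(|\Omega|-h(k)|\partial\Omega|)}\leq\frac{4|\partial\Omega|^2}{|\Omega|^2}$.)

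The argument is short in substance; the work is the bookkeeping of the three possibilities for the width $w$ and the verification, uniform in $d\geq2$, of the scalar inequality above at its endpoints. The equality cases that appear ($d=2$, and $\bar h=\tfrac{|\Omega|}{2|\partial\Omega|}$, i.e.\ $h(k)=h_*$) indicate that within this scheme neither the additive constant $\tfrac{4|\partial\Omega|^2}{|\Omega|^2}$ nor the lower bound on $k$ can be relaxed.
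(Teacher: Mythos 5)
Your plan correctly identifies the key geometric input (from \eqref{est000}, mean convexity gives $|\omega_h|\leq h|\partial\Omega|$ for $h<\bar h$), correctly recognizes that part (i) is just part (i) of Theorem~\ref{smooth_1}, and correctly reads part (iii) as the degenerate form of part (ii) where the lower bound on $k$ is vacuous. Where you diverge from the paper is the proof of (ii). The paper is much shorter: it observes that $|\omega_h|\leq h|\partial\Omega|$ for $h<\bar h$ makes \eqref{evsums-DirichletLaplacian13-NEW-2} available with $X=h|\partial\Omega|/|\Omega|$ for that range of $h$, inserts the {\em optimal} $X_0$ from \eqref{optX0}, and finishes exactly as in Theorem~\ref{thm_convex_first_eigenvalues} via $1+\sqrt{1+a}\leq 2+\sqrt a$; the threshold on $k$ is precisely the condition $X_0\leq\bar h|\partial\Omega|/|\Omega|$. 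You instead pick a suboptimal width $w=\min\{h(k),\bar h,|\Omega|/(2|\partial\Omega|)\}$, plug it straight into \eqref{evsums-DirichletLaplacian1} without the Bernoulli reduction that produces \eqref{evsums-DirichletLaplacian13-NEW-2}, and are left with a two-parameter scalar inequality involving $(1-\gamma)^{-2/d}$ to verify case by case.

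This can be made to work, but as written there is a real gap: the scalar inequalities you call ``elementary'' are not proved, and they are not obvious uniformly in $d\geq 2$ and $\gamma\in(0,\tfrac12]$. For example, at $\theta=1$ the claim is
\begin{equation*}
\frac{1}{\gamma(1-\gamma)}+\frac{d}{2\gamma^2}\bigl[(1-\gamma)^{-2/d}-1\bigr]\ \leq\ \frac{2}{\gamma}+4,\qquad \gamma\in(0,\tfrac12],\ d\geq2.
\end{equation*}
This does hold, but to see it one needs precisely the Bernoulli estimate $(1-\gamma)^{-2/d}-1\leq\frac{2}{d}\cdot\frac{\gamma}{1-\gamma}$ (convexity of $a\mapsto(1-\gamma)^{-a}$), which collapses the left side to $\frac{2}{\gamma(1-\gamma)}=\frac{2}{\gamma}+\frac{2}{1-\gamma}\leq\frac{2}{\gamma}+4$ for $\gamma\leq\tfrac12$. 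In other words, for this endpoint your route reduces to the paper's own intermediate step \eqref{evsums-DirichletLaplacian13-NEW-2} at a suboptimal $X$; you have simply deferred the Bernoulli step rather than avoided it. The endpoint $\theta^2=1-2\gamma$ and the case $\gamma=\tfrac12$ likewise need explicit arguments ($d(2^{2/d}-1)\leq 2$ for $d\geq2$ must also be justified, e.g.\ by monotonicity of $d\mapsto d(2^{2/d}-1)$). None of these verifications appear, so the proposal as written is incomplete. The paper's route — Bernoulli first, then optimize $X$, then the algebraic inequality — avoids all of this bookkeeping and is the one you should compare against; your alternative buys no simplification and costs extra analysis.
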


\begin{rem}
We remark that in order to have a second term with the right power of $k$ and a remainder $R(k)$ uniformly bounded in $k$ in Theorem \ref{main_thm_evsums-DirichletLaplacian}, much less regularity than $C^2$ is required. In fact, if the set $\mathbb R^d\setminus\Omega$ has positive reach (see e.g., \cite{federer} for the precise definition), then
$$
|\omega_h|=\sum_{i=0}^{d-1}h^{d-i}\Phi_i(\Omega),
$$
where the coefficients $\Phi_i(\Omega)$ depend only on the domain $\Omega$ and are the so-called curvature measures of $\Omega$ (up to dimensional constants). We refer to \cite{federer} for more information on curvature measures and sets with positive reach. Hence if $\mathbb R^d\setminus\Omega$ is a set with positive reach, then point (i) of Theorem \ref{smooth_1} remains valid, up to
possibly substituting
for $|\partial\Omega|$ in the second summand in the left side of \eqref{evsums-DirichletLaplacian14} a suitable quantity which depends only on $\Omega$. Moreover, point (i) of Theorem \ref{smooth_1} holds also when $\Omega$ is a $d$-dimensional polyhedron. In fact in this case the volume of the tube $|\omega_h|$ about $\Omega$ when $h$ is sufficiently small is given by a polynomial of degree $d$ in the variable $h$ and the lower order term is given by $h|\partial\Omega|$ (see e.g., \cite{cheegertubes}).
\end{rem}




\subsection{Estimates for planar sets}\label{sec:1.5.4}
In the case of planar sets we are able to provide bounds which depend on relevant features of the domain. Namely, if the planar domain is of class $C^2$, the estimates depend explicitly only on $k,|\Omega|, |\partial\Omega|$ and on the number of connected components of the boundary. Estimates are improved when the boundary has at most two connected components or when it is convex. In the case of a polygon, we show that bounds depend explicitly only on $k,|\Omega|, |\partial\Omega|$ and on the value of its angles. In this case we note that bounds depend on fairly simple geometric quantities which are easily computable, despite the constant of the second term is not the correct one (but differs for a dimensional factor). On the other hand, Weyl's law tells us that asymptotically sharp upper bounds hold, however we are in general not able to compute them explicitly.

We have the following theorem.

\begin{theorem}\label{planar_thm}
Let $\Omega$ be a bounded domain in $\mathbb R^2$. Then the following statements hold:
\begin{itemize}
\item[(i)] If $\Omega$ is of class $C^2$ then for all $k\geq\frac{|\Omega|}{2\pi\bar h^2}$
\begin{equation}\label{planarC2}
\frac1{k}\sum_{j=1}^k\lambda_{j}
\leq 2\pi\frac{k}{|\Omega|}\\+\left(8\pi\right)^{\frac{1}{2}}\frac{|\partial\Omega|}{|\Omega|}\left(\frac{k}{|\Omega|}\right)^{\frac{1}{2}}+R(k),
\end{equation}
where
\begin{equation}\label{remainder_planarC2}
R(k)=\frac{2\left(|\partial\Omega|^2-(2-b)\pi|\Omega|-(2-b)\pi |\partial\Omega| h(k)\right)}{|\Omega|\left(|\Omega|-|\partial\Omega|h(k)+(2-b)\pi h(k)^2\right)},
\end{equation}
with $h(k)=(|\Omega|/(2k\pi))^{1/2}$ and $b$ denoting the number of connected components of $\partial\Omega$. In particular,
$$
\lim_{k\rightarrow+\infty} R(k)=\frac{2(|\partial\Omega|^2-(2-b)\pi |\Omega|)}{|\Omega|^2}.
$$
\item[(ii)] If $\Omega$ is of class $C^2$ and $\partial\Omega$ has at most $2$ connected components, then for all $0<\alpha<1$ and $k\geq\frac{|\Omega|}{2\pi}\max\left\{\bar h^{-2},\left(\frac{|\partial\Omega|}{\alpha|\Omega|}\right)^2\right\}$,
\begin{equation*}
\frac1{k}\sum_{j=1}^k\lambda_{j}
\leq 2\pi\frac{k}{|\Omega|}\\+\left(8\pi\right)^{\frac{1}{2}}\frac{|\partial\Omega|}{|\Omega|}\left(\frac{k}{|\Omega|}\right)^{\frac{1}{2}}+\frac{2|\partial\Omega|^2}{(1-\alpha)|\Omega|^2}.
\end{equation*}
\item[(iii)] If $\Omega$ is convex, then for all positive integer $k$,
\begin{equation*}
\frac1{k}\sum_{j=1}^k\lambda_{j}
\leq 2\pi\frac{k}{|\Omega|}\\+\left(8\pi\right)^{\frac{1}{2}}\frac{|\partial\Omega|}{|\Omega|}\left(\frac{k}{|\Omega|}\right)^{\frac{1}{2}}+\frac{4|\partial\Omega|^2}{|\Omega|^2}.
\end{equation*}
\item[(iv)] If $\Omega$ is a polygon with perimeter given by $|\partial\Omega|$, $n_a\in\mathbb N$ angles $\left\{\alpha_i\right\}_{i=1}^{n_a}$ with $0<\alpha_i<\pi$, and $n_b\in\mathbb N$ angles $\left\{\beta_i\right\}_{i=1}^{n_b}$ with $\pi<\beta_i<2\pi$, then there exists $\tilde h>0$ such that \eqref{planarC2} holds for $k\geq\frac{|\Omega|}{2\pi\tilde h^2}$, with
\begin{equation}\label{remainder_polygon}
R(k)=\frac{2\left(|\partial\Omega|^2-(S_A-S_B)|\Omega|-(S_A-S_B) |\partial\Omega| h(k)\right)}{|\Omega|\left(|\Omega|-|\partial\Omega|h(k)+(S_A-S_B)h(k)^2\right)},
\end{equation}
where
$$
S_A=\sum_{i=1}^{n_a}\cot(\alpha_i/2)
$$
and
$$
S_B=\sum_{i=1}^{n_b}\frac{(\beta_i-\pi)}{2}.
$$
In particular,
$$
\lim_{k\rightarrow+\infty} R(k)=\frac{2(|\partial\Omega|^2-(S_A-S_B)|\Omega|)}{|\Omega|^2}.
$$
\end{itemize}
\end{theorem}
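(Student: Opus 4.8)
The plan is to deduce all four parts from the single inequality \eqref{evsums-DirichletLaplacian1} of Theorem \ref{dirichlet_laplacian_thm_general}, specialised to $d=2$ (so that $C_2=4\pi$, $\tfrac{d}{d+2}C_d=2\pi$ and $2(\tfrac{2C_d}{d+2})^{1/2}=\sqrt{8\pi}$), applied to one fixed family of test functions. For a positive integer $k$ put $h(k):=(|\Omega|/(2\pi k))^{1/2}$ and take $\phi_k:=\min(\delta/h(k),1)\in H^1_0(\Omega)\cap L^\infty(\Omega)$, where $\delta(x)=\mathrm{dist}(x,\partial\Omega)$. Then $\|\phi_k\|_\infty=1$; since $|\nabla\delta|=1$ a.e.\ on $\{\delta>0\}$ we get $\|\nabla\phi_k\|_2^2=|\omega_{h(k)}|/h(k)^2$; and $\|\phi_k\|_2^2\ge|\Omega|-|\omega_{h(k)}|$ because $0\le\phi_k\le 1$ with $\phi_k\equiv1$ off $\omega_{h(k)}$. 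Substituting into \eqref{evsums-DirichletLaplacian1} and using $2\pi k=|\Omega|/h(k)^2$ yields the master estimate
\[
\frac1k\sum_{j=1}^k\lambda_j\ \le\ \frac{|\omega_{h(k)}|/h(k)^2+2\pi k}{|\Omega|-|\omega_{h(k)}|}\ =\ \frac{|\Omega|+|\omega_{h(k)}|}{h(k)^2\bigl(|\Omega|-|\omega_{h(k)}|\bigr)} .
\]
Everything now hinges on inserting, in each regularity class, the exact two-term formula for the volume of the inner tube $|\omega_h|$, valid for $h$ below a geometric threshold (and in particular below $r_\Omega$, so that $\omega_h\subsetneq\Omega$).

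For part (i) I would invoke the planar Steiner formula: if $\partial\Omega$ is of class $C^2$ then for $h\le\bar h$ (with $\bar h$ as in \eqref{barh}, positive by Theorem \ref{tubular0}) the nearest-point projection is single-valued on $\omega_h$, and the coarea formula gives $|\omega_h|=h|\partial\Omega|-\tfrac{h^2}{2}\int_{\partial\Omega}\mathcal H\,d\sigma$; by Gauss--Bonnet $\int_{\partial\Omega}\mathcal H\,d\sigma=2\pi\chi(\Omega)=2\pi(2-b)$, so $|\omega_h|=h|\partial\Omega|-(2-b)\pi h^2$. Plugging $|\omega_{h(k)}|=h(k)|\partial\Omega|-(2-b)\pi h(k)^2$ into the master estimate and splitting off the term $\tfrac{2|\partial\Omega|}{h(k)|\Omega|}=\sqrt{8\pi}\,\tfrac{|\partial\Omega|}{|\Omega|}(k/|\Omega|)^{1/2}$ produces exactly \eqref{planarC2} with remainder \eqref{remainder_planarC2}; letting $h(k)\to0$ gives the stated limit. (Equivalently this is Theorem \ref{smooth_1}(ii) at $d=2$, whose curvature integrals collapse by Gauss--Bonnet.) Part (ii) is then a corollary: if $b\le2$ then $(2-b)\pi\ge0$, so the numerator of \eqref{remainder_planarC2} is at most $2|\partial\Omega|^2$ and the denominator is at least $|\Omega|\bigl(|\Omega|-|\partial\Omega|h(k)\bigr)$; imposing in addition $h(k)\le\alpha|\Omega|/|\partial\Omega|$, i.e.\ $k\ge\tfrac{|\Omega|}{2\pi}(|\partial\Omega|/(\alpha|\Omega|))^2$, bounds the denominator below by $(1-\alpha)|\Omega|^2$, whence $R(k)\le 2|\partial\Omega|^2/((1-\alpha)|\Omega|^2)$. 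Part (iii) requires no new work at all: it is precisely inequality \eqref{evsums-DirichletLaplacian15_v30-NEW} of Theorem \ref{thm_convex_first_eigenvalues} evaluated at $d=2$.

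Part (iv) is where the genuine effort lies, since the relevant input is the polygonal tube formula $|\omega_h|=h|\partial\Omega|-(S_A-S_B)h^2$ for $h$ below a threshold $\tilde h$. To establish it I would decompose $\omega_h$ into the orthogonal edge rectangles $R_e$ (points whose nearest boundary point lies in the relative interior of an edge $e$ and within distance $h$) together with the circular sectors centred at reflex vertices (points whose nearest boundary point is that vertex). Choosing $\tilde h$ so small that no two non-adjacent portions of $\partial\Omega$ come within $2h$ of one another, only incident edges interact: at a convex vertex of interior angle $\alpha_i<\pi$ the two incident rectangles are shortened near the vertex and overlap, and a short polar-coordinate computation shows that their combined contribution falls short of $h\cdot(\text{sum of incident edge lengths})$ by exactly $\cot(\alpha_i/2)\,h^2$; at a reflex vertex of interior angle $\beta_i>\pi$ the incident rectangles are disjoint and full-length, but one must add the circular sector of opening $\beta_i-\pi$ and radius $h$, of area $\tfrac{\beta_i-\pi}{2}h^2$. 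Summing over all vertices gives $|\omega_h|=h|\partial\Omega|-S_A h^2+S_B h^2$. Inserting $|\omega_{h(k)}|=h(k)|\partial\Omega|-(S_A-S_B)h(k)^2$ into the master estimate and simplifying exactly as in part (i) yields \eqref{planarC2} with $R(k)$ as in \eqref{remainder_polygon}, and the limit follows on letting $h(k)\to0$. The main obstacle is precisely this elementary-but-delicate tube computation for polygons, together with a clean explicit choice of admissible $\tilde h$; once $|\omega_h|$ is in hand, the remaining algebra is the bookkeeping common to all four cases.
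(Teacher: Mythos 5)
Your proposal is correct and follows the paper's own route: the single test function $\phi_h=\min(\delta/h,1)$ in inequality \eqref{evsums-DirichletLaplacian1} with $h=h(k)=(|\Omega|/2\pi k)^{1/2}$ gives (after using $\|\nabla\phi_h\|_2^2=|\omega_h|/h^2$, $\|\phi_h\|_2^2\geq|\Omega|-|\omega_h|$, $\|\phi_h\|_\infty=1$) exactly the paper's estimate \eqref{evsums-DirichletLaplacian13}, which at $d=2$ and $h=h(k)$ is your ``master estimate'' $\frac1k\sum_{j\leq k}\lambda_j\leq(|\Omega|+|\omega_{h(k)}|)/(h(k)^2(|\Omega|-|\omega_{h(k)}|))$, and the four parts are obtained by inserting the planar Steiner formula $|\omega_h|=h|\partial\Omega|-\pi(2-b)h^2$ for $C^2$ boundaries (via Gauss--Bonnet, matching \eqref{wh_planar_C2}), the convex-case formula \eqref{evsums-DirichletLaplacian15_v30-NEW}, and the polygonal tube formula \eqref{wh_polygonal}. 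The only cosmetic difference is in (ii), where you bound the explicit remainder $R(k)$ of (i) directly (using $(2-b)\pi\geq0$ and $h(k)\leq\alpha|\Omega|/|\partial\Omega|$) rather than routing through the argument of Corollary \ref{smooth_convex}(ii) as the paper does; both rest on $|\omega_h|\leq h|\partial\Omega|$ and yield the same constant.
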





\section{The main results for Neumann eigenvalues}\label{sec:1.6}

In this section we discuss refined upper bounds and new lower bounds for the eigenvalues of $-\Delta_{\Omega}^N$, the self-adjoint realization of the (nonnegative) Laplacian on $\Omega$ with Neumann boundary conditions. Throughout this section we shall always assume that $\Omega$ is a bounded domain in $\mathbb R^d$ such that the spectrum of $-\Delta_{\Omega}^N$ is discrete.   (This is true, e.g., if the embedding $H^1(\Omega)\subset L^2(\Omega)$ is compact.) Here and in what follows we denote by $\left\{\mu_j\right\}_{j=1}^{\infty}$ the set of (nonnegative) eigenvalues of $-\Delta_{\Omega}^N$, and by $\left\{v_j\right\}_{j=1}^{\infty}$ the corresponding orthonormal set in $L^2(\Omega)$ of eigenfunctions.

Weyl-type upper bounds for sums of Neumann eigenvalues \eqref{kroger_0} are known from \cite{Kro};
however such bounds can be improved by means of the averaged variational principle (see also \cite{harrell_stubbe_18}).  We obtain improvements of the classical Kr\"oger inequality \eqref{kroger_0} as a corollary of Theorem \ref{dirichlet_laplacian_thm_general}.




\begin{theorem}\label{refinement_kroger_thm}
For all positive integers $k$,
\begin{equation}\label{Neumann-Laplacian-ev-bound-normalized2}
  \frac{d+2}{d}\frac{1}{k}\sum_{j=1}^{k}\mu_j-C_d\left(\frac{k}{|\Omega|}\right)^{\frac{2}{d}}\leq -C_d\left(\frac{k}{|\Omega|}\right)^{\frac{2}{d}}\,\left(C_d\left(\frac{k}{|\Omega|}\right)^{-\frac{2}{d}}\mu_{k+1}-1\right)^2,
\end{equation}
and for single eigenvalues,
\begin{equation}\label{Neumann-Laplacian-ev-bound-normalized3}
 C_d\left(\frac{k}{|\Omega|}\right)^{\frac{2}{d}}x_-\leq \mu_k\leq\mu_{k+1} \leq C_d\left(\frac{k}{|\Omega|}\right)^{\frac{2}{d}}x_+,
\end{equation}
where
\begin{equation*}
x_{\pm}=1\pm\sqrt{1-\frac{\frac{d+2}{d}\frac{1}{k}\sum_{j=1}^{k}\mu_j}{C_d\left(\frac{k}{|\Omega|}\right)^{\frac{2}{d}}}}.
\end{equation*}
For all $z>0$
\begin{equation}\label{riesz_means_neumann}
  \sum_{j}(z-\mu_{j})_{+}\geq \frac{2}{d+2}C_d^{-\frac{d}{2}}\,|\Omega|z^{1+d/2}.
\end{equation}
For the partition function
\begin{equation}\label{part_func_neumann}
\sum_{j=1}^{\infty}e^{-\mu_j t}\geq\frac{|\Omega|}{(4\pi t)^{\frac{d}{2}}},
\end{equation}
\end{theorem}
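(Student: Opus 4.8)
The plan is to derive everything from a single application of the averaged variational principle to the Neumann Laplacian, using the free-exponential test functions $f_\xi(x) = (2\pi)^{-d/2} e^{i\xi\cdot x}$ restricted to $\Omega$, exactly as in the derivation of Theorem \ref{dirichlet_laplacian_thm_general} but now with $\phi \equiv 1$, which is admissible since the Neumann form domain $H^1(\Omega)$ contains constants. First I would compute the relevant quantities: $\|f_\xi\|_{L^2(\Omega)}^2 = (2\pi)^{-d}|\Omega|$ and $Q_{-\Delta^N_\Omega}(f_\xi,f_\xi) = (2\pi)^{-d}|\xi|^2 |\Omega|$. Because the $v_j$ are real and orthonormal, $\int_{\mathbb R^d} |\langle f_\xi, v_j\rangle|^2\, d\xi = \|v_j\|_2^2 = 1$, so the tight-frame constant is $C = 1$ on $M = \mathbb R^d$ with Lebesgue measure. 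Plugging into \eqref{averaged-var-principle-Rieszmean2} and integrating the positive part $(z - |\xi|^2)_+$ over $\xi \in \mathbb R^d$ — which contributes $\omega_d z^{d/2}\cdot\frac{d}{d+2}z\cdot$(a $(2\pi)^{-d}$) after the standard radial computation — yields
\begin{equation*}
R_1(z) = \sum_j (z - \mu_j)_+ \geq \frac{2}{d+2}(2\pi)^{-d}\omega_d |\Omega|\, z^{1 + d/2}.
\end{equation*}
Rewriting $(2\pi)^{-d}\omega_d = C_d^{-d/2}$ via \eqref{Cd} gives \eqref{riesz_means_neumann} directly.

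Next I would pass from the Riesz-mean bound to bounds on the eigenvalue averages by means of the Legendre transform relation quoted in the introduction, ${\mathcal L}[R_1](w) = (w - [w])\,\mu_{[w]+1} + \sum_{j=1}^{[w]} \mu_j$, together with the general fact that a lower bound on $R_1$ transforms into an upper bound on its Legendre transform. Applying this at integer $w = k$ and optimizing (the right side $\frac{2}{d+2}C_d^{-d/2}|\Omega|\,z^{1+d/2}$ being a clean power of $z$, the supremum $\sup_z (kz - R_1\text{-lower bound})$ is elementary), I expect to obtain an inequality of the shape $\sum_{j=1}^k \mu_j \leq \frac{d}{d+2} C_d (k/|\Omega|)^{2/d} k - (\text{a quadratic correction in } \mu_{k+1})$; organizing the algebra so that the correction appears as $-C_d(k/|\Omega|)^{2/d}(C_d(k/|\Omega|)^{-2/d}\mu_{k+1} - 1)^2$ produces \eqref{Neumann-Laplacian-ev-bound-normalized2}. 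The single-eigenvalue bounds \eqref{Neumann-Laplacian-ev-bound-normalized3} then follow by treating \eqref{Neumann-Laplacian-ev-bound-normalized2} as a quadratic inequality in the normalized quantity $C_d(k/|\Omega|)^{-2/d}\mu_{k+1}$ and solving: writing $t := C_d(k/|\Omega|)^{-2/d}\mu_{k+1}$ and $a := \frac{d+2}{d}\cdot\frac{1}{k}\sum_{j=1}^k \mu_j / [C_d(k/|\Omega|)^{2/d}]$, the inequality reads $(t-1)^2 \le 1 - a$, hence $t \in [x_-, x_+]$ with $x_\pm = 1 \pm \sqrt{1-a}$; the same $t$ also dominates the analogous normalized $\mu_k$, giving both inequalities at once (this is the same packaging used for Dirichlet eigenvalues in Corollary \ref{quadratic-eigenvalue-inequality-general}).

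Finally, the heat-trace bound \eqref{part_func_neumann} follows by Laplace transforming \eqref{riesz_means_neumann}: using the identity $\int_0^\infty e^{-zt}\, R_1(z)\, dz$ style argument, or more directly $\sum_j e^{-\mu_j t} = t^2 \int_0^\infty e^{-zt}\,\big(\sum_j (z-\mu_j)_+\big)\,dz$ — recalling $\int_0^\infty e^{-zt}(z-\mu)_+\,dz = e^{-\mu t}/t^2$ — and then inserting the lower bound $\frac{2}{d+2}C_d^{-d/2}|\Omega| z^{1+d/2}$ and computing $t^2 \int_0^\infty e^{-zt} z^{1+d/2}\,dz = \Gamma(2 + d/2) t^{-d/2}$. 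After simplifying $\frac{2}{d+2}\Gamma(2+d/2)C_d^{-d/2} = (4\pi)^{-d/2}$ (using $\Gamma(2+d/2) = \frac{d+2}{2}\cdot\frac{d}{2}\Gamma(d/2)$, $\omega_d = \pi^{d/2}/\Gamma(1+d/2)$, and $C_d^{-d/2} = (2\pi)^{-d}\omega_d$), one gets $\sum_j e^{-\mu_j t} \geq |\Omega|(4\pi t)^{-d/2}$. The only point requiring care is the justification of the Legendre/Laplace passages — interchange of sum and integral and the legitimacy of the Legendre duality with the nonnegative, convex, piecewise-linear $R_1$ — but these are standard and already implicitly used elsewhere in the paper; the substantive content is entirely in the one-line averaged-variational-principle estimate with $\phi \equiv 1$, so I anticipate no real obstacle beyond bookkeeping of the dimensional constants.
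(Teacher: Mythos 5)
Your route to \eqref{riesz_means_neumann} (AVP with $f_\xi = (2\pi)^{-d/2}e^{i\xi\cdot x}$ and $\phi \equiv 1$, tight-frame constant $C=1$) and then to \eqref{part_func_neumann} by Laplace transform are both correct and are essentially what the paper does; the constant bookkeeping $\tfrac{2}{d+2}\Gamma(2+d/2)C_d^{-d/2} = (4\pi)^{-d/2}$ checks out. The single-eigenvalue bounds \eqref{Neumann-Laplacian-ev-bound-normalized3} do follow by solving the quadratic as you indicate, mirroring Corollary \ref{quadratic-eigenvalue-inequality-general}.

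The gap is in your derivation of \eqref{Neumann-Laplacian-ev-bound-normalized2}. You propose to ``apply the Legendre transform at $w=k$ and optimize,'' but $\sup_z\bigl(kz - L(z)\bigr)$ with $L(z) := \tfrac{2}{d+2}C_d^{-d/2}|\Omega|z^{1+d/2}$ is exactly $\tfrac{d}{d+2}C_d(k/|\Omega|)^{2/d}k$ and contains \emph{no} $\mu_{k+1}$-dependence whatsoever — the optimization yields only Kr\"oger, not a quadratic correction. The $\mu_{k+1}$-dependent refinement comes precisely from \emph{not} optimizing: one uses that the Legendre supremum $\sup_z\bigl(kz - R_1(z)\bigr)$ is attained on the whole interval $[\mu_k,\mu_{k+1}]$ because $R_1$ has slope $k$ there, so $\sum_{j\leq k}\mu_j = k\mu_{k+1} - R_1(\mu_{k+1}) \leq k\mu_{k+1} - L(\mu_{k+1})$. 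In the normalized variable $u := C_d^{-1}(k/|\Omega|)^{-2/d}\mu_{k+1}$ this gives $\bar m \leq u - \tfrac{2}{d+2}u^{1+d/2}$, and passing from this to the claimed quadratic form $\bar m \leq \tfrac{d}{d+2}u(2-u)$ requires the convexity inequality $2(u^{d/2}-1)\geq d(u-1)$ (an equality when $d=2$, strict for $d\geq 3$). That elementary but nontrivial step is exactly the ``refinement of Young's inequality'' the paper cites from \cite[Appendix A]{harrell_stubbe_18}; your phrase ``organizing the algebra'' glosses over it. Finally, note that the lower bound $\mu_k \geq C_d(k/|\Omega|)^{2/d}x_-$ in \eqref{Neumann-Laplacian-ev-bound-normalized3} needs the same chain run with $z = \mu_k$ rather than $z=\mu_{k+1}$ (legitimate because \eqref{general_AVP-DirichletLaplacian} holds for every $z\in[\mu_k,\mu_{k+1}]$); it does not follow from the $\mu_{k+1}$-inequality alone.
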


We remark that \eqref{riesz_means_neumann} and \eqref{part_func_neumann} are known, classical results (see \cite{Lap1997}), which we obtain as by-products Theorem \ref{refinement_kroger_thm}.

For completeness we recall the two-term lower bound for the Riesz mean $R_1(z)$ proved in
\cite{harrell_stubbe_18} by means of the averaged variational principle:
\begin{theorem}
For each unit vector $v\in\mathbb R^d$ and for all $z\geq 0$
\begin{multline*}
\sum_j(z-\mu_j)_+\\
\geq \frac{2}{d+2}C_d^{-\frac{d}{2}}|\Omega|z^{\frac{d}{2}+1}
+
\left(\frac{1}{4}\frac{2}{d+1}C_{d-1}^{-\frac{d-1}{2}}\frac{|\Omega|}{\delta_v(\Omega)}z^{\frac{d}{2}+\frac{1}{2}}
-\frac{1}{96}(2\pi)^{2-d}\omega_d\frac{|\Omega|}{\delta_v(\Omega)^2}z^{\frac{d}{2}}\right)_+,
\end{multline*}
where
$$
\delta_v(\Omega):=\sup\left\{v\cdot(x-y):x,y\in\Omega\right\}.
$$
\end{theorem}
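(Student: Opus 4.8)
The plan is to apply the averaged variational principle in the form \eqref{averaged-var-principle-Rieszmean2} to $H=-\Delta^N_\Omega$, whose quadratic form is $Q_H(f,f)=\int_\Omega|\nabla f|^2\,dx$ on the form domain $H^1(\Omega)$, using a family of test functions adapted to the slab containing $\Omega$ in the direction $v$. Since $-\Delta^N_\Omega$ transforms covariantly under rotations and translations, while $|\Omega|$ and $\delta_v(\Omega)$ behave accordingly, we may assume $v=e_d$ and, setting $\delta:=\delta_v(\Omega)$, that $\Omega\subset\mathbb R^{d-1}\times(0,\delta)$; write $x=(x',x_d)$. As test functions I would take
\[
f_{\xi',n}(x):=e^{i\xi'\cdot x'}\,g_n(x_d),\qquad (\xi',n)\in M:=\mathbb R^{d-1}\times\mathbb N_0,
\]
where $g_0\equiv 1$ and $g_n(t):=\sqrt2\cos\!\left(n\pi t/\delta\right)$ for $n\geq 1$ are the $L^2(0,\delta)$-normalised Neumann eigenfunctions of the interval; being restrictions of globally smooth functions, all $f_{\xi',n}$ lie in $H^1(\Omega)$ and are admissible. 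With $\mu$ the product of Lebesgue measure on $\mathbb R^{d-1}$ and counting measure on $\mathbb N_0$, the tight-frame condition holds with $C=(2\pi)^{d-1}\delta$: for $\phi\in L^2(\Omega)$ extended by zero, Plancherel in $x'$ together with completeness of $\{g_n/\sqrt\delta\}$ in $L^2(0,\delta)$ gives $\int_M|\langle\phi,f_{\xi',n}\rangle|^2\,d\mu=(2\pi)^{d-1}\delta\,\|\phi\|_2^2$.

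Next I would evaluate the two ingredients of \eqref{averaged-var-principle-Rieszmean2}: $\|f_{\xi',n}\|_2^2=\int_\Omega g_n(x_d)^2\,dx$ and $Q_H(f_{\xi',n},f_{\xi',n})=|\xi'|^2\int_\Omega g_n^2\,dx+\int_\Omega (g_n')^2\,dx$. For $n=0$ these equal $|\Omega|$ and $|\xi'|^2|\Omega|$, so the $n=0$ slice contributes to the right side of \eqref{averaged-var-principle-Rieszmean2} exactly
\[
\frac{1}{(2\pi)^{d-1}\delta}\int_{\mathbb R^{d-1}}(z-|\xi'|^2)_+\,|\Omega|\,d\xi'=\frac{|\Omega|}{\delta}\,\frac{2}{d+1}\,C_{d-1}^{-\frac{d-1}{2}}\,z^{\frac{d+1}{2}},
\]
a positive term of the order of the sought second term. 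For $n\geq 1$, writing $\tau_n:=n\pi/\delta$ and $\gamma_n:=\int_\Omega\cos(2\tau_n x_d)\,dx$ (so $|\gamma_n|\le|\Omega|$, and, being cosine–Fourier coefficients of the cross-sectional function $x_d\mapsto|\{x':(x',x_d)\in\Omega\}|$ on $(0,\delta)$, the sequence $(\gamma_n)$ is square-summable and tends to $0$), one finds $\int_\Omega g_n^2=|\Omega|+\gamma_n$ and $\int_\Omega (g_n')^2=\tau_n^2(|\Omega|-\gamma_n)$, hence
\[
z\|f_{\xi',n}\|_2^2-Q_H(f_{\xi',n},f_{\xi',n})=|\Omega|\bigl(z-|\xi'|^2-\tau_n^2\bigr)+\gamma_n\bigl(z-|\xi'|^2+\tau_n^2\bigr).
\]
Taking positive parts and integrating in $\xi'$, the leading ($\gamma_n$-free) part of the $n\geq 1$ sum is $|\Omega|\sum_{n\geq1}G(\tau_n)$ with $G(\tau):=\int_{\mathbb R^{d-1}}(z-|\xi'|^2-\tau^2)_+\,d\xi'=\tfrac{2}{d+1}\omega_{d-1}(z-\tau^2)_+^{(d+1)/2}$, and since $G$ is nonnegative, decreasing on $[0,\sqrt z]$, concave on $[0,\sqrt{z/d}]$ and convex on $[\sqrt{z/d},\sqrt z]$, a one–dimensional Riemann-sum comparison with step $h=\pi/\delta$ (midpoint/trapezoid estimates on each monotonicity–convexity regime) bounds $\sum_{n\geq1}G(\tau_n)$ below by $\tfrac1h\int_0^\infty G$ minus an explicit deficit of order $z^{(d+1)/2}$ plus a second-order remainder of order $h^2 z^{d/2}$.

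Assembling these, $\tfrac1h\int_0^\infty G$ times $|\Omega|/C$ produces precisely the Weyl term $\tfrac{2}{d+2}C_d^{-d/2}|\Omega|z^{d/2+1}$; the deficit partially cancels the $n=0$ contribution, and the surviving $z^{(d+1)/2}$–term, after optimising the one-dimensional estimates so that they hold for every $d\geq 2$, has coefficient exactly $\tfrac14\,\tfrac{2}{d+1}C_{d-1}^{-(d-1)/2}|\Omega|\delta^{-1}$; the second-order remainder, together with the contribution of the oscillatory corrections $\gamma_n$ — which I would control using $|\gamma_n|\le|\Omega|$, square-summability, and the fact that they enter only against smooth, compactly supported weights, so their total cost is $O(z^{d/2})$ with the stated geometric dependence — contributes the negative term, which one arranges to be at most $\tfrac1{96}(2\pi)^{2-d}\omega_d|\Omega|\delta^{-2}z^{d/2}$. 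Finally, since $R_1(z)\geq\tfrac{2}{d+2}C_d^{-d/2}|\Omega|z^{d/2+1}$ holds unconditionally by \eqref{riesz_means_neumann}, the sub-leading terms may be replaced by their positive part, yielding the asserted inequality for all $z\geq0$. The main obstacle is precisely this bookkeeping of the sub-leading terms: obtaining the summation-versus-integral comparison with honest constants uniformly in $d$, and — this is where the choice of the width direction $v$ is essential, because it makes the transverse variable range over an interval of length exactly $\delta$ — showing that the domain-dependent oscillatory corrections $\gamma_n$ cost no more than the allotted $z^{d/2}$ remainder.
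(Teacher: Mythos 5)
First, a procedural point: this theorem is \emph{not} proved in the present paper. The authors explicitly state ``For completeness we recall the two-term lower bound for the Riesz mean $R_1(z)$ proved in \cite{harrell_stubbe_18} by means of the averaged variational principle,'' so there is no proof here to compare against directly; the comparison has to be made with the argument in \cite{harrell_stubbe_18}.

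That said, your overall architecture is the right one and matches the cited reference in spirit: rotate so $v=e_d$, place $\Omega$ in the slab $\mathbb R^{d-1}\times(0,\delta)$ with $\delta=\delta_v(\Omega)$, take test functions $e^{i\xi'\cdot x'}g_n(x_d)$ with $g_n$ the $L^2(0,\delta)$-normalised Neumann cosines, apply \eqref{averaged-var-principle-Rieszmean2}, and compare the resulting sum over $n$ with the integral. Your tight-frame constant $(2\pi)^{d-1}\delta$, the $n=0$ contribution $\frac{2}{d+1}C_{d-1}^{-(d-1)/2}\frac{|\Omega|}{\delta}z^{(d+1)/2}$, the Weyl term arising from $\frac{1}{h}\int_0^\infty G$ with $h=\pi/\delta$, and the concave-on-$[0,\sqrt{z/d}]$/convex-on-$[\sqrt{z/d},\sqrt z]$ structure of $G(\tau)=\frac{2}{d+1}\omega_{d-1}(z-\tau^2)_+^{(d+1)/2}$ all check out. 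Notably, one \emph{cannot} add the sine modes (or equivalently pass to complex exponentials $e^{in\pi x_d/\delta}$, $n\in\mathbb Z$) to kill the oscillations, because then the $\tau$-sum becomes a symmetric trapezoid rule whose error has no definite sign and the second-order gain disappears: the cosine-only (Neumann-mode) family is exactly what creates the one-sided overshoot that produces the positive second term. So your choice of family is not merely convenient, it is forced.

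The genuine gap is your handling of the oscillatory corrections $\gamma_n=\int_\Omega\cos(2n\pi x_d/\delta)\,dx$. You assert that because $|\gamma_n|\le|\Omega|$, $(\gamma_n)$ is square-summable, and ``they enter only against smooth, compactly supported weights,'' their total cost is $O(z^{d/2})$ with a coefficient depending only on $|\Omega|$ and $\delta$. None of these ingredients delivers that. The weight against $\gamma_n$ is $W_n=\int_{|\xi'|^2<z-\tau_n^2}(z-|\xi'|^2+\tau_n^2)\,d\xi'$, of size $\sim z^{(d+1)/2}$ for $n\lesssim\sqrt z\,\delta/\pi$. The crude bound $|\gamma_n|\le|\Omega|$ gives $\big|\sum_n\gamma_n W_n\big|\lesssim|\Omega|\sum_n W_n\sim|\Omega|\,\frac{\delta}{\pi}\,z^{(d+2)/2}$, which is \emph{leading} order; Cauchy--Schwarz with $\sum\gamma_n^2<\infty$ gives $O\!\left(z^{(d+1)/2+1/4}\right)$, which is still larger than the second term you are trying to establish, and moreover $\sum\gamma_n^2$ is a geometric quantity that is not controlled by $|\Omega|$ and $\delta_v(\Omega)$ alone, whereas the theorem's constants are universal. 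Writing $\sum_n\gamma_n W_n=\int_0^\delta A(t)\,\Psi(t)\,dt$ with $A$ the cross-sectional area function and $\Psi(t)=\sum_n W_n\cos(2n\pi t/\delta)$, an Abel-summation estimate of $\Psi$ gives only $|\Psi(t)|\lesssim z^{(d+1)/2}/|\sin(\pi t/\delta)|$, again one power of $\sqrt z$ too large and divergent near the endpoints (and $W_n$ is not monotone in $n$, it is unimodal with a maximum at $\tau_n\approx\sqrt{z/d}$, so the convexity lemmas for nonnegative cosine sums do not apply). So the step ``the $\gamma_n$ cost only $O(z^{d/2})$'' needs a real idea, not a citation of generic decay. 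The choice of the direction $v$ does not rescue this: $A$ and hence the $\gamma_n$ remain fully domain-dependent for every admissible $v$. This is exactly the part where the actual argument in \cite{harrell_stubbe_18} does the work, and it is the part your write-up leaves unproved.

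A secondary, smaller issue: even granting control of the $\gamma_n$, the claim that the Riemann-sum deficit ``partially cancels the $n=0$ contribution'' leaving the precise coefficient $\frac14\,\frac{2}{d+1}C_{d-1}^{-(d-1)/2}$ is stated but not established. The trivial monotonicity bound gives a deficit of at most $G(0)$ (which would cancel the $n=0$ term completely), and the refinement to roughly $\frac12 G(0)$ via second-order Taylor/Euler--Maclaurin estimates needs the error terms tracked uniformly in $d$ to justify the conservative constants $\frac14$ and $\frac{1}{96}$; as written this is a plan, not a proof.
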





As another application of the averaged variational principle we obtain Weyl-type upper bounds for Riesz means and lower bounds for averages, which are sharp in the semiclassical limit.

\begin{theorem}\label{thm_bounds_neumann_eigenvalues}
Let $\Omega$ be a bounded domain in $\mathbb R^d$ of class $C^2$.\\
For all $z\geq \max\left\{\bar h^{-2},\frac{4}{9}\max_{\substack{x\in\partial\Omega\\0\leq h\leq\bar h/2}}\left|\sum_{i=1}^{d-1}\frac{h\kappa_i(x)}{1-h\kappa_i(x)}\right|^{2}\right\}$ the following inequality holds.
\begin{equation}\label{riesz-neu-5}
\sum_{j}(z-\mu_j)_+\leq \frac{2}{d+2}C_d^{-\frac{d}{2}}|\Omega|z^{1+\frac{d}{2}}+\pi|\partial\Omega|c_dz^{\frac{d}{2}+\frac{1}{2}}+R'(z),
\end{equation}
where $c_d$ is a constant depending only on $d$ and $R'(z)$ depends explicitly on $z$, $d$, $|\partial\Omega|$ and $|\omega_{h(z)}|$ with $h(z)=\frac{\pi}{2\sqrt{z}}$. Moreover,
$$
\lim_{z\rightarrow+\infty}\frac{|R'(z)|}{z^{d/2}}\leq\frac{\pi^2(d-1)c_d}{4}\left|\int_{\partial\Omega}\mathcal H(x)d\sigma(x)\right|.
$$
For all $k\geq c_d|\Omega|\max\left\{\bar h^{-d},(2/3)\max_{\substack{x\in\partial\Omega\\0\leq h\leq\bar h/2}}\left|\sum_{i=1}^{d-1}\frac{h\kappa_i(x)}{1-h\kappa_i(x)}\right|^{d}\right\}$ the following inequality holds.
\begin{equation}\label{bounds_neumann_eigenvalues}
\frac{1}{k}\sum_{j=1}^k\mu_j\geq\frac{d}{d+2}C_d\left(\frac{k}{|\Omega|}\right)^{\frac{2}{d}}-\pi c_dC_d^{\frac{d+1}{2}}\frac{|\partial\Omega|}{|\Omega|}\left(\frac{k}{|\Omega|}\right)^{\frac{1}{d}}-R(k),
\end{equation}
where $R(k)$ depends explicitly on $k$, $d$, $|\Omega|$, $|\partial\Omega|$ and $|\omega_{h(k)}|$ with \\$h(k)=\frac{\pi}{2}C_d^{-1/2}|\Omega|^{1/d}k^{-1/d}$. Moreover,
$$
\lim_{k\rightarrow+\infty}|R(k)|\leq\frac{\pi^2(d-1)c_dC_d^{\frac{d}{2}}}{4|\Omega|}\left|\int_{\partial\Omega}\mathcal H(x)d\sigma(x)\right|.
$$
\end{theorem}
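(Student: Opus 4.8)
\medskip
\noindent\emph{Strategy.} I would establish the Riesz--mean bound \eqref{riesz-neu-5} first and then read off \eqref{bounds_neumann_eigenvalues} by Legendre duality, exactly as \eqref{evsums-DirichletLaplacian1} was deduced from \eqref{Riesz-mean-ineq-DirichletLaplacian}: since the Legendre transform is order--reversing and ${\mathcal L}[R_1](k)=\sum_{j=1}^{k}\mu_j$, an upper bound $\sum_j(z-\mu_j)_+\le U(z)$ gives $\sum_{j=1}^{k}\mu_j\ge{\mathcal L}[U](k)=\max_{z>0}\bigl(kz-U(z)\bigr)$, whose maximiser is $z\asymp C_d(k/|\Omega|)^{2/d}$; this lies in the range where \eqref{riesz-neu-5} is available precisely when $k$ exceeds the stated threshold, and evaluating produces the two displayed terms of \eqref{bounds_neumann_eigenvalues} together with a remainder $R(k)={\mathcal L}[R'](k)$. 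For \eqref{riesz-neu-5} itself I would run a Berezin--Li--Yau computation on the Neumann eigenfunctions after correcting them near $\partial\Omega$ --- equivalently, apply the trace--inequality form of the variational principle recalled in the introduction with $H=-\Delta_{\mathbb R^d}$ the free Laplacian, $P=\mathbf{1}_{[0,z)}(-\Delta_{\mathbb R^d})$ the projection onto $\{|\xi|^2<z\}$ (so $(1-P)(-\Delta_{\mathbb R^d}-z)(1-P)\ge0$), and the corrected eigenfunctions as test vectors $f$. Reading the inequality as $\int_{\{|\xi|^2<z\}}(z-|\xi|^2)|\widehat{f}(\xi)|^2\,d\xi\ge z\|f\|^2_{L^2(\mathbb R^d)}-\|\nabla f\|^2_{L^2(\mathbb R^d)}$ and summing over the corrected eigenfunctions turns the free phase--space volume $\int_{\{|\xi|^2<z\}}(z-|\xi|^2)\,d\xi=\tfrac{2}{d+2}\omega_d z^{1+d/2}$ into the leading term $\tfrac{2}{d+2}C_d^{-d/2}|\Omega|z^{1+d/2}$.

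\medskip
\noindent\emph{The construction.} Fix $z$ above the threshold and put $h=h(z)=\tfrac{\pi}{2\sqrt z}$, so $h\le\bar h/2$ by the first hypothesis; the curvature hypothesis $z\ge\tfrac49\max\bigl|\sum_i\tfrac{h\kappa_i}{1-h\kappa_i}\bigr|^2$ guarantees that the boundary--normal (Fermi) coordinates $(s,t)\in\partial\Omega\times[0,\bar h)$ are nondegenerate, with volume element $\prod_i(1-t\kappa_i(s))\,d\sigma(s)\,dt$ in $\Omega$ and $\prod_i(1+t\kappa_i(s))\,d\sigma(s)\,dt$ in the outer collar $\{x:0<\mathrm{dist}(x,\Omega)\le h\}$. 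For each $v_j$ with $\mu_j<z$, let $\widetilde{v}_j=v_j$ on $\Omega$, $\widetilde{v}_j(s,t)=v_j|_{\partial\Omega}(s)\cos(\sqrt z\,t)$ in the outer collar, and $\widetilde{v}_j=0$ elsewhere. The Neumann condition $\partial_\nu v_j=0$ matches $g'(0)=0$ for $g(t)=\cos(\sqrt z\,t)$, so $\widetilde{v}_j\in H^1(\mathbb R^d)$, and since $g''=-zg$ the normal--oscillation part of $z\|\widetilde{v}_j\|^2-\|\nabla\widetilde{v}_j\|^2$ over the collar cancels; $g(h)=\cos(\pi/2)=0$ is why $h(z)=\tfrac{\pi}{2\sqrt z}$ is the natural width. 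Set $F_z(\xi):=\sum_{\mu_j<z}|\widehat{\widetilde{v}_j}(\xi)|^2$: Plancherel gives $\int F_z=\sum_{\mu_j<z}\|\widetilde{v}_j\|^2_{L^2(\mathbb R^d)}$ and $\int|\xi|^2F_z=\sum_{\mu_j<z}\|\nabla\widetilde{v}_j\|^2_{L^2(\mathbb R^d)}$, while Bessel's inequality for $\{\widetilde{v}_j\}$ (orthonormalised in $L^2(\mathbb R^d)$, which does not change its span) gives $0\le F_z(\xi)\le(2\pi)^{-d}\bigl(|\Omega|+h|\partial\Omega|+\tfrac{d-1}{2}h^2\!\int_{\partial\Omega}\mathcal H\,d\sigma+O(h^3)\bigr)$. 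Feeding this and $F_z\ge0$ into the summed trace inequality (the weight $z-|\xi|^2$ is nonnegative exactly on $\{|\xi|^2<z\}$) and isolating the collar pieces of $\|\widetilde{v}_j\|^2$ and $\|\nabla\widetilde{v}_j\|^2$ gives $\sum_{\mu_j<z}(z-\mu_j)\le(2\pi)^{-d}\bigl(|\Omega|+h|\partial\Omega|+O(h^2)\bigr)\tfrac{2}{d+2}\omega_d z^{1+d/2}+\Sigma(z)$, with $\Sigma(z)$ made of the collar energies and Jacobian corrections. Substituting $h=\tfrac{\pi}{2\sqrt z}$ yields the leading term $\tfrac{2}{d+2}C_d^{-d/2}|\Omega|z^{1+d/2}$, the second term $\pi c_d|\partial\Omega|z^{d/2+1/2}$ with an explicit dimensional $c_d$, and a remainder $R'(z)$ assembled from the $h^2\!\int_{\partial\Omega}\mathcal H$ term and $\Sigma(z)$; quantitative trace inequalities on $\omega_h$ (the condition $z\ge\bar h^{-2}$ makes the trace constant usable) bound $\Sigma(z)$ at order $z^{d/2}$ and give the stated control of $\limsup_{z\to\infty}|R'(z)|z^{-d/2}$ by $\bigl|\int_{\partial\Omega}\mathcal H\,d\sigma\bigr|$. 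Because $|\omega_h|=\sum_{i=0}^{d-1}h^{d-i}\Phi_i(\Omega)$ is polynomial in $h$ for $C^2$ domains, $R'(z)$ (and $R(k)$ after the Legendre transform) ends up depending explicitly on $|\omega_{h(z)}|$ (resp.\ $|\omega_{h(k)}|$) and is uniform in $z$ (resp.\ $k$).

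\medskip
\noindent\emph{Main obstacle.} The crux is that replacing $\{v_j\}$ by $\{\widetilde{v}_j\}$ must cost only terms strictly of order $z^{d/2}$, below the $z^{d/2+1/2}$ second term. This requires, first, a Bessel bound for $F_z$ with error genuinely $o(z^{-1/2})$ relative to the main term --- so the extensions must be arranged to (almost) preserve $L^2(\mathbb R^d)$--orthonormality, and a crude Gram--matrix perturbation will not do, since on a collar of width $h(z)\asymp z^{-1/2}$ the eigenfunctions of frequency $\sim\sqrt z$ oscillate once and their $L^2$--overlaps are not a priori small; and, second, uniform--in--$j$ control --- summed over $\mu_j<z$ --- of $\int_{\partial\Omega}|v_j|^2\,d\sigma$ and $\int_{\partial\Omega}|\nabla_{\partial\Omega}v_j|^2\,d\sigma$, i.e.\ boundary local Weyl bounds with explicit constants. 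The $C^2$ regularity of $\partial\Omega$ --- via the exact tube polynomial above and uniform bounds on the principal curvatures on $\omega_{\bar h/2}$ --- is exactly what makes both of these go through with a remainder uniform in $z$, hence, after the Legendre transform, in $k$.
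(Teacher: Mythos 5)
Your approach to \eqref{riesz-neu-5} is genuinely different from the paper's. You extend each Neumann eigenfunction with $\mu_j<z$ outward by a cosine profile in Fermi coordinates and run the Berezin--Li--Yau phase-space argument on the extended functions in $\mathbb R^d$. The paper stays inside $\Omega$: it multiplies $v_j$ by a cut-off $\phi_h\in H^1_0(\Omega)\cap L^\infty(\Omega)$ with $\phi_h\equiv1$ off the collar $\omega_h$, inserts $\phi_h v_j$ into the averaged variational principle \eqref{averaged-var-principle-Rieszmean2_0} for the \emph{Dirichlet} Laplacian, and via the identity \eqref{grad-phi-u-identity} obtains $\sum_j(z-\mu_j)_+\le\sum_j(z-\lambda_j)_++z\sum_{\mu_j\le z}\int_\Omega(1-\phi_h^2)v_j^2+\sum_{\mu_j\le z}\int_\Omega|\nabla\phi_h|^2v_j^2$; the Dirichlet Riesz mean is then closed by the classical Berezin estimate. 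Both routes reduce to \eqref{bounds_neumann_eigenvalues} by Legendre duality, but the paper is careful to verify --- using spectral-function bounds --- that the maximizing $z=C_d(k/|\Omega|)^{2/d}$ lies above the admissible threshold $z_0$ and that $\mu_k>z_0$, so that $\sup_{z\ge z_0}(kz-R_1(z))$ actually recovers $\sum_{j=1}^k\mu_j$; this should not be glossed over.

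The step you cannot close, and which you honestly flag as the main obstacle, is the Bessel bound $F_z(\xi)\le(2\pi)^{-d}(|\Omega|+|\mathrm{collar}|)$. The family $\{\widetilde v_j\}_{\mu_j<z}$ is orthonormal on $\Omega$ but not on $\mathbb R^d$: the Gram perturbation $E_{jl}=\int_{\mathrm{collar}}\widetilde v_j\widetilde v_l$ has no a priori small operator norm, since the boundary traces $v_j|_{\partial\Omega}$ of the $\sim z^{d/2}$ eigenfunctions in play are not orthogonal on $\partial\Omega$ and all oscillate at the scale of the collar width $h\sim z^{-1/2}$; and Gram-orthonormalizing destroys the identity $z\|\widetilde v_j\|^2-\|\nabla\widetilde v_j\|^2=(z-\mu_j)+\text{(explicit collar terms)}$ on which the whole computation rests. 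Absent a resolution here, the argument does not deliver \eqref{riesz-neu-5}. The paper sidesteps the issue by never leaving $\Omega$; the price is that the collar errors $\int(1-\phi_h^2)v_j^2$ and $\int|\nabla\phi_h|^2v_j^2$ must be controlled, which is done by an $L^\infty$ bound on the spectral function $\sum_{\mu_j\le z}v_j(x)^2$ uniformly on $\overline{\omega}_{h(z)}$, combining Safarov's Fourier Tauberian estimate at distance $\gtrsim z^{-1/2}$ from $\partial\Omega$ with a Sogge--Xu maximum-principle bound for the inner boundary layer. It is that last lemma --- not nondegeneracy of the Fermi chart --- which produces the curvature threshold $\frac{4}{9}\max\bigl|\sum_i\frac{h\kappa_i}{1-h\kappa_i}\bigr|^2$ in the hypotheses: the expression is the first-order coefficient of the Laplacian in Fermi coordinates, see \eqref{local}. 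A substitute of comparable strength for the Bessel step would be needed before your route could be carried out.
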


\begin{rem}
We note that the second term in the lower bound \eqref{bounds_neumann_eigenvalues} coincides with the second term of the asymptotic expansion expression of the sum of Neumann eigenvalues \eqref{weyl_N}, up to a multiplicative dimensional constant.
\end{rem}




\section{Application of the AVP to the Dirichlet\texorpdfstring{\\}{}Laplacian: proofs of the main results}\label{sec:2}

This section collects all the proofs of the Theorems presented in Section \ref{sec:1.5} as well as the proofs of the corresponding corollaries. Such proofs consist in the application of the averaged variational principle \eqref{averaged-var-principle-Rieszmean2_0}-\eqref{averaged-var-principle-Rieszmean2} with suitable families of test functions. 

Here and in the sequel, for a function $f\in L^1(\mathbb R^d)$ we denote by $\hat f(\xi)$ its Fourier transform  defined by $\hat f(\xi):=(2\pi)^{-d/2}\int_{\mathbb R^d}f(x)e^{i\xi\cdot x}dx$, and with abuse of notation, for a function $f\in H^1_0(\Omega)$ we still denote by $\hat f(\xi)$ the Fourier transform of its extension by zero to $\mathbb R^d$.

\begin{proof}[Proof of Theorem \ref{dirichlet_laplacian_thm_general}]
We take in \eqref{averaged-var-principle-Rieszmean2_0} trial functions of the form \\
$\displaystyle f_{\xi}(x)=(2\pi)^{-d/2}e^{i\xi x}\phi(x)$ with $\phi\in H^1_0(\Omega)\cap L^{\infty}(\Omega)$. After averaging over $\xi\in \mathbb{R}^d$ and using the unitarity of the Fourier transform, for any weight $w(\xi), 0\leq w(\xi)\leq 1$
 we get:
\begin{multline}\label{avp-DirichletLplacian1}
  \sum_{j=1}^k(\lambda_{k+1}-\lambda_{j})\int_{\Omega}\phi^2(x)u_j(x)^2\;dx\\
	\geq (2\pi)^{-d}\int_{\mathbb{R}^d}\left((\lambda_{k+1}-|\xi|^2)||\phi||_2^2-||\nabla\phi||_2^2\right)w(\xi)\,d\xi.
\end{multline}
Choosing $\displaystyle w(\xi)=1_{\{\xi\in\mathbb R^d:|\xi|\leq R\}}$, inequality \eqref{avp-DirichletLplacian1} immediately implies that
\begin{multline}\label{general_AVP-DirichletLaplacian}
  \sum_{j=1}^k(\lambda_{k+1}-\lambda_{j})\int_{\Omega}\phi^2(x)u_j(x)^2\;dx\\
  \geq (2\pi)^{-d}\omega_d||\phi||_2^2\left(\left(\lambda_{k+1}-\frac{||\nabla\phi||_2^2}{||\phi||_2^2}\right)R^d-\frac{d}{d+2}R^{d+2}\right).\\
 \end{multline}
We note that inequality \eqref{general_AVP-DirichletLaplacian} holds with $\lambda_{k+1}$ replaced by any $z\in[\lambda_k,\lambda_{k+1}]$, and hence we have (see also \eqref{averaged-var-principle-Rieszmean2})
\begin{multline}\label{general_AVP-DirichletLaplacian01}
  \sum_j(z-\lambda_{j})_+\int_{\Omega}\phi^2(x)u_j(x)^2\;dx\\
  \geq (2\pi)^{-d}\omega_d||\phi||_2^2\left(\left(z-\frac{||\nabla\phi||_2^2}{||\phi||_2^2}\right)R^d-\frac{d}{d+2}R^{d+2}\right).\\
 \end{multline}
Now, by taking
$$
R^2=\left(z-\frac{||\nabla\phi||_2^2}{||\phi||_2^2}\right)_{+}
$$
in \eqref{general_AVP-DirichletLaplacian01} we get
\begin{equation}\label{avp-DirichletLplacian2}
 \sum_j(z-\lambda_{j})_+\int_{\Omega}\phi^2(x)u_j(x)^2\;dx
	\geq \frac{2}{d+2}\,(2\pi)^{-d}\omega_d||\phi||_2^2\left(z-\frac{||\nabla\phi||_2^2}{||\phi||_2^2}\right)_{+}^{\frac{d}{2}+1}.
\end{equation}
This proves \eqref{Riesz-mean-ineq-DirichletLaplacian}.

Inequality \eqref{evsums-DirichletLaplacian1} follows from \eqref{general_AVP-DirichletLaplacian} by taking
$$
R^2=C_d\left(\frac{k}{|\Omega|}\right)^{\frac{2}{d}}\rho(\phi)^{-2/d}
$$
and from the fact that $\int_{\Omega}\phi^2 u_j^2 dx\leq\|\phi\|_{\infty}^2$. This concludes the proof.
\end{proof}

\begin{proof}[Proof of Corollary \ref{dirichlet_bounds_spec_fcn}]
Laplace transforming \eqref{Riesz-mean-ineq-DirichletLaplacian} immediately yields inequality \eqref{Partition-function-inequality}, which implies that
\begin{equation}\label{part-fct-estimate}
  \sum_{j=1}^{\infty}e^{-\lambda_{j}t}\geq \frac{|\Omega|\rho(\phi)}{(4\pi t)^{\frac{d}{2}}}\cdot e^{-\frac{||\nabla\phi||_2^2}{||\phi||_2^2}\,t}
\end{equation}
for all $t>0$. In view of the semiclassical expansion we are interested in bounds for small $t$, and therefore we apply the inequality $\displaystyle e^{-x}\geq 1-x$ to \eqref{part-fct-estimate}, from which we get
\begin{equation*}
  \sum_{j=1}^{\infty}e^{-\lambda_{j}t}\geq \frac{|\Omega|\rho(\phi)}{(4\pi t)^{\frac{d}{2}}}\cdot \left(1 -\frac{||\nabla\phi||_2^2}{||\phi||_2^2}\,t	\right),
\end{equation*}
immediately implying \eqref{part-fct-estimate-small-times}. This concludes the proof.
\end{proof}

\begin{proof}[Proof of Corollary \ref{quadratic-eigenvalue-inequality-general}]
The proof follows that of \cite[Theorem 1.1]{harrell_stubbe_18}, replacing the $\mu_j$ by $\displaystyle \lambda_{j}-\frac{||\nabla\phi||_2^2}{||\phi||_2^2}$ and $m_k$ by $m_k\rho(\phi)^{-d/2}$, and using inequality \eqref{general_AVP-DirichletLaplacian}.
\end{proof}




\subsection{Upper bounds with no restriction on the regularity of the boundary: proofs}\label{sub:2:1}

\begin{proof}[Proof of Theorem \ref{simple_bound}]

We choose $\phi=u_1$ in Theorem \ref{dirichlet_laplacian_thm_general}, where $u_1$ is the eigenfunction associated with the first Dirichlet eigenvalue on $\Omega$. We recall the optimal upper bound (see \cite{vdB})
\begin{equation}\label{u1-upperbound}
  |u_1(x)|^2\leq 2d(2\pi)^{-d}\omega_d \frac{\lambda_1(B)^{\frac{d-1}{2}}}{|J_{\frac{d}{2}}(\sqrt{\lambda_1(B)})|}\,r_{\Omega}^{-d},
\end{equation}
where $B$ denotes the unit ball in $\mathbb R^d$, $J_{\nu}$ denotes the Bessel function of the first kind and order $\nu$, and $r_{\Omega}$ is the inradius of $\Omega$ (see \eqref{inradius}). The bound \eqref{u1-upperbound} is saturated when $\Omega$ is a ball . We note that inequality \eqref{u1-upperbound} holds for bounded domains in $\mathbb R^d$ with no further regularity assumptions on the boundary. By using \eqref{u1-upperbound} in \eqref{Riesz-mean-ineq-DirichletLaplacian} and \eqref{evsums-DirichletLaplacian1} we obtain \eqref{avp-DirichletLplacian3} and \eqref{evsums-DirichletLaplacian15_v2}. This concludes the proof.
\end{proof}

\begin{rem}\label{remark1}
We note that bound  \eqref{evsums-DirichletLaplacian15_v2} depends on $\lambda_1$. In order to have a bound which depends only on $k,|\Omega|$, and $r_{\Omega}$, we need an upper bound on $\lambda_1$. We refer, e.g., to \cite{geometrical_structure} for a review of geometric inequalities for eigenvalues. A simple upper bound is, for example, the following: 
\begin{equation}\label{up_bd_lambda1_1}
\lambda_1\leq\frac{\lambda_1(B)}{r_{\Omega}^2}.
\end{equation}
In fact, from the variational principle for $\lambda_1$,
\begin{equation}\label{up_bd_lambda1}
\lambda_1\leq\frac{\|\nabla\phi\|_2^2}{\|\phi\|_2^2}
\end{equation}
for all $\phi\in H^1_0(\Omega)$.  Taking $\phi=u_{1,r_{\Omega}}$ , where $u_{1,r_{\Omega}}$ is the first Dirichlet eigenfunction on $B_{r_{\Omega}}\subseteq\Omega$  extended by $0$ and $B_{r_{\Omega}}$ is a ball of radius $r_{\Omega}$ contained in $\Omega$, we immediately obtain \eqref{up_bd_lambda1_1}, which now complements bound \eqref{evsums-DirichletLaplacian15_v2}.
\end{rem}

\begin{rem}\label{remark2}
We remark that a simpler upper bound for $u_1$ is given by the standard heat kernel estimate (see e.g., \cite{davies_heat}):
\begin{equation*}
  |u_1(x)|^2\leq \bigg(\frac{e\lambda_1}{2d\pi}\bigg)^{\frac{d}{2}},
\end{equation*}
which yields the more explicit bounds
\begin{equation*}
  \sum_{j}\frac{(z-\lambda_{j})_+}{\lambda_{1}}\geq \bigg(\frac{d}{2e}\bigg)^{\frac{d}{2}}\frac{1}{\Gamma(\frac{d}{2}+2)}\left(\frac{z}{\lambda_{1}}-1\right)_+^{\frac{d}{2}+1}
\end{equation*}
for all $z>0$, and
\begin{equation}\label{simple_heat_bound_2}
	\frac{1}{k}\sum_{j=1}^k(\lambda_{j}-\lambda_1)\leq \frac{d}{d+2}C_dk^{\frac{2}{d}}\left(\frac{e\lambda_1}{2d\pi}\right)
\end{equation}
for all positive integers $k$.
\end{rem}

In order to prove Theorem \ref{thm_convex_first_eigenvalues} we introduce some preliminaries.

We define a function $\phi_h$ such that $\phi_h\equiv 1$ in $\Omega\setminus\overline\omega_h$, 
 $0\leq\phi_h\leq 1$ in $\omega_h$, ${\phi_h}_{|_{\partial\Omega}}=0$ in the following way. Let $f:[0,1]\rightarrow [0,1]$ be a continuously differentiable function such that $f(0)=0$ and $f(1)=1$. Then we set
\begin{equation}\label{phi-h-test-fct}
  \phi_h(x)=\left\{
            \begin{array}{ll}
              1, & \hbox{if $x\in\Omega\setminus\overline\omega_h$;} \\
              f(\frac{\delta(x)}{h}), & \hbox{if $x\in\omega_h$.}
            \end{array}
          \right.
\end{equation}
We note that for all $h>r_{\Omega}$, $\|\phi_h\|_{\infty}=r_{\Omega}/h<1$, and $\|\phi_h\|_{\infty}\rightarrow 0$ as $h\rightarrow+\infty$. Clearly $\phi_h(x)\in H^1_0(\Omega)$ without further regularity assumptions on $\Omega$. 

Choosing, for example, $\displaystyle f(p)=p$, we have $||\phi_h||_{\infty}=1$,
\begin{equation}\label{up_2_phi_h}
||\phi_h||_2^2=|\Omega|-|\omega_h|+\int_{\omega_h}\phi_h^2(x)dx\geq |\Omega|-|\omega_h|
\end{equation}
and
\begin{equation}\label{up_2_nabla_phi_h}
||\nabla\phi_h||_2^2=\frac{|\omega_h|}{h^2},
\end{equation}
where this last fact is a consequence of the Lipschitz continuity of the distance function $\delta(x)$ on $\mathbb R^d$ and of the fact that $|\nabla\delta(x)|=1$ for almost all $x\in\mathbb R^d$.

We are now ready to prove Theorem \ref{thm_convex_first_eigenvalues}.

\begin{proof}[Proof of Theorem \ref{thm_convex_first_eigenvalues}]

We start by proving \eqref{evriesz-DirichletLaplacian15_v30-NEW}. From \eqref{Riesz-mean-ineq-DirichletLaplacian} it follows that for any $\phi\in H^1_0(\Omega)\cap L^{\infty}(\Omega)$,
\begin{equation}\label{ineq-4}
  \sum_{j}(z-\lambda_j)_{+}\geq \frac{2}{d+2}\,(2\pi)^{-d}\omega_d
  \frac{||\phi||_2^2}{||\phi||_{\infty}^2}\left(z-\frac{||\nabla\phi||_2^2}{||\phi||_2^2}\right)_{+}^{\frac{d}{2}+1}.
\end{equation}
Applying Bernoulli's inequality yields
\begin{equation}\label{ineq-5}
  \sum_{j}(z-\lambda_j)_{+}\geq \frac{2}{d+2}\,(2\pi)^{-d}\omega_d
  \frac{||\phi||_2^2}{||\phi||_{\infty}^2}z^{\frac{d}{2}+1}\left(1-\frac{||\nabla\phi||_2^2}{z||\phi||_2^2}\left(1+\frac{d}{2}\right)\right).
\end{equation}
In order to get an explicit estimate we need a suitable choice for $\phi$ and upper bounds on $\displaystyle \frac{||\phi||_{\infty}^2}{||\phi||_2^2}$ and $\displaystyle \frac{||\nabla\phi||_2^2}{||\phi||_2^2}$.
In \eqref{ineq-5} we choose $\phi=\phi_h$, where $\phi_h$ is defined by \eqref{phi-h-test-fct} with $f(p)=p$.

Now we recall that if $\Omega$ is convex, then $|\omega_h|\leq h|\partial\Omega|$  for all $h\leq r_{\Omega}$. This follows from the co-area formula and from the fact that the Hausdorff measure of the sets $\partial\Omega_h=\left\{x\in\Omega:{\rm dist}(x,\partial\Omega)=h\right\}$ is a non-increasing function of $h$ for $h\in[0,r_{\Omega}]$. In the same way one proves that for a convex domain, $r_{\Omega}\geq\frac{|\Omega|}{|\partial\Omega|}$.  With these facts and \eqref{up_2_phi_h} and \eqref{up_2_nabla_phi_h}, from inequality \eqref{ineq-5} we deduce that
\begin{equation*}
  \sum_{j}(z-\lambda_j)_{+}\geq \frac{2}{d+2}\,(2\pi)^{-d}\omega_d \left(|\Omega|-h|\partial\Omega|\right)\left(1-\frac{(d+2)|\partial\Omega|}{2zh(|\Omega|-h|\partial\Omega|)}\right),
\end{equation*}
which can be rewritten as
\begin{equation}\label{ineq-6}
  \sum_{j}(z-\lambda_j)_{+}\geq \frac{2}{d+2}\,(2\pi)^{-d}\omega_d |\Omega|z^{1+d/2}(1-X)\left(1-\frac{d+2}{2z\frac{|\Omega|^2}{|\partial\Omega|^2}X(1-X)}\right),
\end{equation}
where $X=\frac{h|\partial\Omega|}{|\Omega|}$. The optimizing $X$ is given by $X=X_0:=\frac{|\partial\Omega|}{|\Omega|\sqrt{z}}\sqrt{\frac{d+2}{2}}$, which yields the bound
\begin{equation}\label{ineq-7}
  \sum_{j}(z-\lambda_j)_{+}\geq \frac{2}{d+2}\,(2\pi)^{-d}\omega_d |\Omega|z^{\frac{d}{2}+1}-2\sqrt{\frac{2}{d+2}}(2\pi)^{-d}\omega_d |\partial\Omega|z^{\frac{d}{2}+\frac{1}{2}}.
\end{equation}
The choice of $X_0$ is admissible provided that $X_0\leq\frac{r_{\Omega}|\partial\Omega|}{|\Omega|}$, which is equivalent to the condition $z\geq\frac{d+2}{2r_{\Omega}^2}$. This proves \eqref{evriesz-DirichletLaplacian15_v30-NEW}.

Let us now prove \eqref{evsums-DirichletLaplacian15_v30-NEW}. From \eqref{evsums-DirichletLaplacian1} it follows that for any $\phi\in H^1_0(\Omega)\cap L^{\infty}(\Omega)$,
\begin{multline}\label{evsums-DirichletLaplacian12-NEW}
 \frac1{k}\sum_{j=1}^k\lambda_{j}\leq \frac{d}{d+2}\,C_d\left(\frac{k}{|\Omega|}\right)^{\frac{2}{d}}+\frac{d}{d+2}\,C_d\left(\frac{k}{|\Omega|}\right)^{\frac{2}{d}}\left(\rho(\phi)^{-2/d}-1\right)+\frac{||\nabla\phi||_2^2}{||\phi||_2^2}\\
\leq \frac{d}{d+2}\,C_d\left(\frac{k}{|\Omega|}\right)^{\frac{2}{d}}+\frac{2}{d+2}\,C_d\left(\frac{k}{|\Omega|}\right)^{\frac{2}{d}}\left(\rho(\phi)^{-1}-1\right)+\frac{||\nabla\phi||_2^2}{||\phi||_2^2}\\
=\frac{d}{d+2}\,C_d\left(\frac{k}{|\Omega|}\right)^{\frac{2}{d}}+\frac{2}{d+2}\,C_d\left(\frac{k}{|\Omega|}\right)^{\frac{2}{d}}\left(\frac{|\Omega|\cdot||\phi||_{\infty}^2}{||\phi||_2^2}-1\right)+\frac{||\nabla\phi||_2^2}{||\phi||_2^2}.
\end{multline}
We proceed as above and choose $\phi=\phi_h$ in \eqref{evsums-DirichletLaplacian12-NEW}, where $\phi_h$ is defined by \eqref{phi-h-test-fct} with $f(p)=p$. Thanks to \eqref{up_2_phi_h} and \eqref{up_2_nabla_phi_h}, inequality \eqref{evsums-DirichletLaplacian12-NEW} implies that
\begin{equation}\label{evsums-DirichletLaplacian13-NEW}
 \frac1{k}\sum_{j=1}^k\lambda_{j}\leq \frac{d}{d+2}\,C_d\left(\frac{k}{|\Omega|}\right)^{\frac{2}{d}}+\left(\frac{2}{d+2}\,C_d\left(\frac{k}{|\Omega|}\right)^{\frac{2}{d}}+\frac{1}{h^2}\right)\cdot\frac{|\omega_h|}{|\Omega|-|\omega_h|},
\end{equation}
for all $h\leq r_{\Omega}$. We can rewrite \eqref{evsums-DirichletLaplacian13-NEW} as follows:
\begin{equation}\label{evsums-DirichletLaplacian13-NEW-2}
 \frac1{k}\sum_{j=1}^k\lambda_{j}\leq \frac{d}{d+2}\,C_d\left(\frac{k}{|\Omega|}\right)^{\frac{2}{d}}+\frac{2}{d+2}\,C_d\left(\frac{k}{|\Omega|}\right)^{\frac{2}{d}}\cdot\frac{X}{1-X}+\frac{|\partial\Omega|^2}{|\Omega|^2X(1-X)},
\end{equation}
where $X=\frac{h|\partial\Omega|}{|\Omega|}$. The right
side of inequality \eqref{evsums-DirichletLaplacian13-NEW} is optimized when 
\begin{equation}\label{optX0}
X=X_0:=\frac{1}{1+\sqrt{\frac{2}{d+2}C_d\left(\frac{k}{|\Omega|}\right)^{2/d}\frac{|\Omega|^2}{|\partial\Omega|^2}+1}}.
\end{equation}
We note that this choice is always admissible for convex domains since $r_{\Omega}\geq\frac{|\Omega|}{|\partial\Omega|}$. Inserting $X=X_0$ into \eqref{evsums-DirichletLaplacian13-NEW} we obtain
\begin{equation}\label{evsums-DirichletLaplacian13-NEW-3}
  \frac{1}{k}\sum_{j=1}^{k}\lambda_j\leq \frac{d}{d+2}\,C_d\left(\frac{k}{|\Omega|}\right)^{\frac{2}{d}}+\frac{2|\partial\Omega|^2}{|\Omega|^2X_0}.
\end{equation}
Using the trivial inequality 
$$
1+\sqrt{\frac{2}{d+2}C_d\left(\frac{k}{|\Omega|}\right)^{2/d}\frac{|\Omega|^2}{|\partial\Omega|^2}+1}\leq 2+\sqrt{\frac{2}{d+2}C_d\left(\frac{k}{|\Omega|}\right)^{2/d}}\frac{|\Omega|}{|\partial\Omega|}
$$
leads to the final estimate,
$$
  \frac{1}{k}\sum_{j=1}^{k}\lambda_j\leq \frac{d}{d+2}\,C_d\left(\frac{k}{|\Omega|}\right)^{\frac{2}{d}}+2\left(\frac{2}{d+2}\,C_d\right)^{\frac{1}{2}}\left(\frac{k}{|\Omega|}\right)^{\frac{1}{d}}\frac{|\partial \Omega|}{|\Omega|}+\frac{4|\partial \Omega|^2}{|\Omega|^2}.
$$
This proves \eqref{evsums-DirichletLaplacian15_v30-NEW}.

In order to conclude the proof, we note that the first eigenvalue $\lambda_1$ satisfies the variational inequality \eqref{up_bd_lambda1}.
Hence choosing the function $\phi=\phi_h$ in \eqref{up_bd_lambda1}, we deduce that
\begin{equation}\label{planar_convex_lambda1}
\lambda_1\leq\frac{1}{h}\frac{|\partial\Omega|}{|\Omega|-h|\partial\Omega|}.
\end{equation}
Since $r_{\Omega}\geq\frac{|\Omega|}{|\partial\Omega|}$, we can choose any $h\leq \frac{|\Omega|}{|\partial\Omega|}$ in \eqref{planar_convex_lambda1}. By taking $h=\frac{|\Omega|}{2|\partial\Omega|}$ we obtain \eqref{planar_convex_lambda12}. This concludes the proof of \eqref{planar_convex_lambda12} and of the theorem.
\end{proof}

\begin{rem}
We note that a simple adaptation of the proof of inequality \eqref{evriesz-DirichletLaplacian15_v30-NEW} shows that for all $\alpha>0$ and for all $z\geq \frac{\alpha d}{r_{\Omega}^2}$,
\begin{equation}\label{AAA}
  \sum_{j}(z-\lambda_j)_{+}\geq \frac{2}{d+2}\,(2\pi)^{-d}\omega_d |\Omega|z^{\frac{d}{2}+1}-(2\pi)^{-d}\omega_d |\partial\Omega|z^{\frac{d}{2}+\frac{1}{2}}\left(\frac{1}{\sqrt{\alpha d}}+2\sqrt{\alpha d}\right).
\end{equation}
The left side of \eqref{AAA} is zero whenever $z\leq\lambda_1$. In this regard we mention the inequality
$$
\lambda_1\geq\frac{\pi^2}{4r{\Omega}^2}
$$
which holds for any convex domain of $\mathbb R^d$ (see \cite{protter2}). In particular this implies that when $d=2$ inequality \eqref{evriesz-DirichletLaplacian15_v30-NEW} holds for all $z\geq \lambda_1$.
\end{rem}



\subsection{Bounds for various choices of \texorpdfstring{$\phi$}{phi} approximating the characteristic function of regular domains: proofs}\label{sub:2:2}

Before proving the results contained in Subsection \ref{sec:1.5.2} it is worth recalling the following definition.

\begin{defn}\label{defn}
Let $E\subset\mathbb R^d$ be a closed set. The upper and lower outer Minkowski contents $\mathcal{M}^+(E)$ and $\mathcal{M}^-(E)$ are defined respectively as
$$
\mathcal{M}^+(E)=\limsup_{h\rightarrow 0^+}\frac{|E^h\setminus E|}{h}\ \ \ \ \ {\rm and}\ \ \ \ \ \mathcal{M}^-(E)=\liminf_{h\rightarrow 0^+}\frac{|E^h\setminus E|}{h},
$$
where $E^h:=\left\{x\in\mathbb R^d:\text{dist}(x,E)\leq h\right\}$. If $\mathcal{M}^+(E)=\mathcal{M}^-(E)<\infty$, we denote by $\mathcal{M}(E)$ their common value and we say that $E$ admits outer Minkowski content $\mathcal{M}(E)$.
\end{defn}
By definition,
\begin{equation}\label{limit}
\lim_{h\rightarrow 0^+}\frac{|\omega_h|}{h}=: \mathcal{M}(\mathbb R^d\setminus\Omega).
\end{equation}
The limit \eqref{limit} is often also called the Minkowski content of $\partial\Omega$ {\it relative to} $\Omega$ (see e.g., \cite{lapidus_fractal_1,lapidus_fractal_2}).

 As mentioned earlier, if the boundary $\partial\Omega$ is sufficiently smooth, then the limit \eqref{limit} gives $|\partial\Omega|$. For example, we have the following proposition.
\begin{proposition}\label{lipschitzprop}
If $\Omega$ is a compact subset of $\mathbb R^d$ with Lipschitz boundary, then
$$
\lim_{h\rightarrow 0^+}\frac{|\omega_h|}{h}=|\partial\Omega|.
$$
\end{proposition}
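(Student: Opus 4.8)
The plan is to localise $\partial\Omega$ using its Lipschitz graph structure and a partition of unity, thereby reducing the computation of $\lim_{h\to0^+}|\omega_h|/h$ to a one-dimensional slicewise computation in each chart. First I would use compactness of $\partial\Omega$ to cover it by finitely many open cylinders $C_1,\dots,C_N$ such that, in suitable orthonormal coordinates $(y',y_d)\in\mathbb R^{d-1}\times\mathbb R$ adapted to $C_i$, one has $\Omega\cap C_i=\{(y',y_d)\in C_i:y_d<g_i(y')\}$ for an $L_i$-Lipschitz function $g_i$ on an open set $U_i'\subset\mathbb R^{d-1}$, with $\partial\Omega\cap C_i$ the graph of $g_i$. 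After shrinking, I pick concentric cylinders $C_i'\Subset C_i$ still covering a neighbourhood of $\partial\Omega$, set $d_i=\operatorname{dist}(\overline{C_i'},\partial C_i)>0$, and choose a smooth partition of unity $\{\psi_i\}_{i=1}^N$ on $\{0\le\delta<\eta\}$ with $\operatorname{supp}\psi_i\subset C_i'$ and $\sum_i\psi_i\equiv1$ there. Then for small $h$, $|\omega_h|=\sum_{i=1}^N\int_{\omega_h\cap C_i'}\psi_i\,dx$ since $\omega_h$ meets only $\{0<\delta\le h\}$ up to a null set.

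Next I would analyse the slices in a fixed chart. The elementary point is that if $(y',y_d)\in C_i'$ with $y_d<g_i(y')$ and $\delta(y',y_d)<d_i$, then the nearest point of $\partial\Omega$ lies in $C_i$, hence on the graph of $g_i$, and a direct minimisation yields the two-sided bound
\[
\frac{g_i(y')-y_d}{\sqrt{1+L_i^2}}\le\delta(y',y_d)\le g_i(y')-y_d .
\]
Consequently, for $h<d_i/\sqrt{1+L_i^2}$ the slice $S_h(y'):=\{y_d<g_i(y'):(y',y_d)\in C_i',\ \delta(y',y_d)\le h\}$ satisfies $\mathcal L^1(S_h(y'))\le\sqrt{1+L_i^2}\,h$, which supplies the dominating function $\|\psi_i\|_\infty\sqrt{1+L_i^2}$ used later. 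To identify the limit, I fix $y'$ in the interior of $U_i'$ at which $g_i$ is differentiable (a.e.\ such $y'$ by Rademacher's theorem), and compare $\delta$ with the distance to the tangent hyperplane at $(y',g_i(y'))$, whose unit normal is $(-\nabla g_i(y'),1)/\sqrt{1+|\nabla g_i(y')|^2}$. Using the first-order expansion of $g_i$ at $y'$, together with the fact that the foot of the perpendicular from a point $(y',y_d)$ with $g_i(y')-y_d$ small stays within $O(g_i(y')-y_d)$ of $y'$, one obtains
\[
\lim_{h\to0^+}\frac{\mathcal L^1(S_h(y'))}{h}=\sqrt{1+|\nabla g_i(y')|^2}\qquad\text{for a.e.\ }y'\in U_i' ;
\]
and since $\psi_i$ is continuous and $S_h(y')$ concentrates near $y_d=g_i(y')$, this also gives $\tfrac1h\int_{S_h(y')}\psi_i(y',y_d)\,dy_d\to\psi_i(y',g_i(y'))\sqrt{1+|\nabla g_i(y')|^2}$ for a.e.\ $y'$.

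Finally I would assemble the pieces. Writing $\int_{\omega_h\cap C_i'}\psi_i\,dx=\int_{U_i'}\bigl(\int_{S_h(y')}\psi_i(y',y_d)\,dy_d\bigr)dy'$ and invoking dominated convergence with the bound from the previous step,
\[
\lim_{h\to0^+}\frac1h\int_{\omega_h\cap C_i'}\psi_i\,dx=\int_{U_i'}\psi_i(y',g_i(y'))\sqrt{1+|\nabla g_i(y')|^2}\,dy'=\int_{\partial\Omega}\psi_i\,d\mathcal H^{d-1},
\]
the last equality being the area formula for the Lipschitz graph. Summing over $i$ and using $\sum_i\psi_i\equiv1$ on $\partial\Omega$ gives $\lim_{h\to0^+}|\omega_h|/h=\mathcal H^{d-1}(\partial\Omega)=|\partial\Omega|$. (One could instead organise the argument through the coarea formula $|\omega_h|=\int_0^h\mathcal H^{d-1}(\{\delta=t\}\cap\Omega)\,dt$ and prove $\mathcal H^{d-1}(\{\delta=t\}\cap\Omega)\to|\partial\Omega|$, but the slicewise computation is more transparent.) The main obstacle is the technical heart of the second step: controlling the nearest-point projection finely enough to upgrade the crude two-sided distance bound—which yields only the correct order $|\omega_h|\asymp h$—to the sharp slice asymptotics, and in particular ensuring that for a.e.\ $y'$ the relevant nearest points remain inside the chart once $h$ is small; the passage to concentric cylinders $C_i'\Subset C_i$ is precisely what makes this control uniform.
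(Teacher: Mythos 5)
The paper does not itself prove this proposition: it refers the reader to Ambrosio, Colesanti and Villa \cite{colesanti_ambrosio}, who derive the identity $\mathcal M(\mathbb R^d\setminus\Omega)=|\partial\Omega|$ within a general framework of rectifiability and sets of finite perimeter. Your argument is therefore an independent, self-contained route, and it is correct. The two-sided bound $(g_i(y')-y_d)/\sqrt{1+L_i^2}\le\delta\le g_i(y')-y_d$ is right (the lower bound comes from minimising $t^2+(a-L_it)^2$ over $t\ge0$, valid once $\delta<d_i$ forces the nearest boundary point into $C_i$), and it supplies both the dominating function and the key localisation: since $\delta\le a:=g_i(y')-y_d$, the nearest graph point $(z',g_i(z'))$ satisfies $|z'-y'|\le a$, so the first-order Taylor expansion of $g_i$ at a Rademacher point $y'$ applies uniformly over the candidate minimisers and yields $\delta/a\to1/\sqrt{1+|\nabla g_i(y')|^2}$ as $a\to0^+$. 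This gives the matching inner and outer containments of $S_h(y')$ that produce $\mathcal L^1(S_h(y'))/h\to\sqrt{1+|\nabla g_i(y')|^2}$, with no need to establish monotonicity of $\delta$ in $y_d$. Dominated convergence and the area formula for the Lipschitz graph then finish it. Relative to the cited reference, your proof is more elementary and transparent (Rademacher, dominated convergence, area formula) but is tailored to the graph structure of a Lipschitz boundary; the reference's heavier geometric-measure-theoretic machinery applies to much broader classes of sets, which is overkill for the Lipschitz and $C^2$ domains used elsewhere in the paper.
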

We refer to \cite{colesanti_ambrosio} for the proof and for a more detailed discussion of the outer Minkowski content and for conditions on sets $E$ ensuring that $\mathcal{M}(E)=|\partial E|$.

We are now ready to prove Theorem \ref{main_thm_evsums-DirichletLaplacian}.

\begin{proof}[Proof of Theorem \ref{main_thm_evsums-DirichletLaplacian}]
As in the proof of inequality \eqref{evsums-DirichletLaplacian12-NEW},  it follows
from \eqref{evsums-DirichletLaplacian1} that
\begin{equation}\label{evsums-DirichletLaplacian12}
 \frac1{k}\sum_{j=1}^k\lambda_{j}\leq \frac{d}{d+2}\,C_d\left(\frac{k}{|\Omega|}\right)^{\frac{2}{d}}+\frac{2}{d+2}\,C_d\left(\frac{k}{|\Omega|}\right)^{\frac{2}{d}}\left(\frac{|\Omega|\cdot||\phi||_{\infty}^2}{||\phi||_2^2}-1\right)+\frac{||\nabla\phi||_2^2}{||\phi||_2^2}
\end{equation}
for all $\phi\in H^1_0(\Omega)\cap L^{\infty}(\Omega)$. In order to get an estimate we need a suitable choice for $\phi$ and upper bounds on $\displaystyle \frac{|\Omega|\cdot||\phi||_{\infty}^2}{||\phi||_2^2}$ and $\displaystyle \frac{||\nabla\phi||_2^2}{||\phi||_2^2}$. We choose
$\phi=\phi_h$ in \eqref{evsums-DirichletLaplacian12}, where $\phi_h$ is defined by \eqref{phi-h-test-fct} with $f(p)=p$. Thanks to \eqref{up_2_phi_h} and \eqref{up_2_nabla_phi_h}, inequality \eqref{evsums-DirichletLaplacian12} becomes
\begin{equation}\label{evsums-DirichletLaplacian13}
 \frac1{k}\sum_{j=1}^k\lambda_{j}\leq \frac{d}{d+2}\,C_d\left(\frac{k}{|\Omega|}\right)^{\frac{2}{d}}+\left(\frac{2}{d+2}\,C_d\left(\frac{k}{|\Omega|}\right)^{\frac{2}{d}}+\frac{1}{h^2}\right)\cdot\frac{|\omega_h|}{|\Omega|-|\omega_h|}
\end{equation}
for all $h\leq r_{\Omega}$. Formula \eqref{evsums-DirichletLaplacian13} holds in great generality under no regularity assumptions on the domain.

Now suppose that $\Omega\in\mathcal S$.  We can rewrite \eqref{evsums-DirichletLaplacian13} as follows:

\begin{equation}\label{evsums-DirichletLaplacian144}
\frac1{k}\sum_{j=1}^k\lambda_{j}\leq \frac{d}{d+2}\,C_d\left(\frac{k}{|\Omega|}\right)^{\frac{2}{d}}+\left(\frac{2}{d+2}\,C_d\left(\frac{k}{|\Omega|}\right)^{\frac{2}{d}}h+\frac{1}{h}\right)\cdot\frac{|\partial\Omega|}{|\Omega|}+R_k(h),
\end{equation}
where
\begin{equation}\label{remainderR}
R_k(h)=\left(\frac{2}{d+2}C_d\left(\frac{k}{|\Omega|}\right)^{\frac{2}{d}}+\frac{1}{h^2}\right)\cdot\frac{h|\partial\Omega||\omega_h|+|\Omega|(|\omega_h|-h|\partial\Omega|)}{|\Omega|(|\Omega|-|\omega_h|)}.
\end{equation}
We neglect for the moment the term $R_k(h)$ in \eqref{evsums-DirichletLaplacian144} and optimize the second summand with respect to $h$. The expression
$$
\left(\frac{2}{d+2}\,C_d\left(\frac{k}{|\Omega|}\right)^{\frac{2}{d}}h+\frac{1}{h}\right)\cdot\frac{|\partial\Omega|}{|\Omega|}
$$
is optimized when
\begin{equation}\label{hk}
h=h(k):=\left(\frac{2}{d+2}C_d\right)^{-\frac{1}{2}}\left(\frac{k}{|\Omega|}\right)^{-\frac{1}{d}}.
\end{equation}
By using \eqref{hk} in \eqref{evsums-DirichletLaplacian144} and the fact that $\Omega\in\mathcal S$,  \eqref{evsums-DirichletLaplacian14} follows
immediately (we set $R(k):=R_k(h(k))$). 
\end{proof}

\begin{rem}[Domains with fractal boundary]
The proof of Theorem \ref{main_thm_evsums-DirichletLaplacian} can be adapted to more general situations, in particular to the case of fractal boundaries. In this connection, we mention the famous Weyl-Berry conjecture, which states  that in the case of a bounded domain $\Omega$, if $\partial\Omega$ has Hausdorff dimension $H$, then $N(\lambda)-(2\pi)^{-d}\omega_d|\Omega|\lambda^{d/2}$ is asymptotically a constant times $\lambda^{H/2}$, where the constant is proportional to the normalized Hausdorff measure of the boundary. Here $N(\lambda)$ denotes the counting function of the Dirichlet Laplacian on $\Omega$. The conjecture in this form is false (see \cite{carmona_fractal}), and in \cite{lapidus_fractal_1} it is conjectured that if $D\in]d-1,d[$ then
$$
N(\lambda)=(2\pi)^{-d}\omega_d|\Omega|\lambda^{d/2}-c_{n,D}\mathcal M_D(\partial\Omega)\lambda^{D/2}+o(\lambda^{D/2})
$$
as $\lambda\rightarrow+\infty$,
where
$$
D:=\inf\left\{\gamma\in[d-1,d]:\lim_{h\rightarrow 0^+}\frac{|\omega_h|}{h^{d-\gamma}}<\infty\right\}
$$
is the Minkowski dimension of $\partial\Omega$ relative to $\Omega$ and
$$
\mathcal M_D(\partial\Omega):=\lim_{h\rightarrow 0^+}\frac{|\omega_h|}{h^{d-D}}
$$
is the $D$-dimensional Minkowski content of $\partial\Omega$ relative to $\Omega$.
This conjecture, however,
was likewise revealed to be false except in the case $d=1$ (see \cite{lapidus_fractal_2}). In fact it is proved in \cite{lapidus_fractal_2} that the spectrum depends not merely on $d$, $D$, $|\Omega|$, and $\mathcal M_D(\partial\Omega)$, but on additional geometry.  It is however important to remark that if $D\in]d-1,d[$ is such that $\mathcal M_D(\partial\Omega)<+\infty$, then $N(\lambda)-(2\pi)^{-d}\omega_d|\Omega|\lambda^{d/2}=O(\lambda^{D/2})$ as $\lambda\rightarrow +\infty$.  (Actually one only needs a $D\in]d-1,d[$ such that $\lim\sup_{h\rightarrow 0^+}h^{-(d-D)}|\omega_h|<+\infty$; see \cite[Theorem 2.1]{lapidus_fractal_1}). Therefore the Minkowsi dimension of $\partial\Omega$ relative to $\Omega$ determines the order of the correction in the asymptotic formula of the counting function. In particular this fact implies the following asymptotic formula for sums with sharp remainder:
$$
\frac{1}{k}\sum_{j=1}^k\lambda_j=\frac{d}{d+2}C_d\left(\frac{k}{|\Omega|}\right)^{\frac{2}{d}}+O(k^{D-d+2})
$$
as $k\rightarrow +\infty$. Assume now that $\Omega$ is a bounded domain such that the Minkowski dimension relative to $\Omega$ of $\partial\Omega$ is $D\in]d-1,d[$ and let $\mathcal M_D(\partial\Omega)$  be the Minkowski content of $\partial\Omega$ relative to $\Omega$. By following the steps of the proof of Theorem \ref{main_thm_evsums-DirichletLaplacian} one immediately obtains
\begin{multline*}
\frac{1}{k}\sum_{j=1}^{k}\lambda_j\leq \frac{d}{d+2}C_d\left(\frac{k}{|\Omega|}\right)^{\frac{2}{d}}\\+2\left(\frac{2C_d(d-D)}{(d+2)(D-d+2)}\right)^{\frac{D-d+2}{2}}\frac{\mathcal M_D(\Omega)}{|\Omega|}\left(\frac{k}{|\Omega|}\right)^{\frac{D-d+2}{d}}+R(k),
\end{multline*}
where $R(k)=o(k^{\frac{D-d+2}{d}})$ as $k\rightarrow +\infty$,  Hence the second term of the upper bound for the sum depends only on $k$, $d$, $D$, $|\Omega|$ and $\mathcal M_D(\Omega)$.
\end{rem}

We conclude this subsection with the proof of Corollary \ref{thm_bounds_part_func_regular}.

\begin{proof}[Proof of Corollary \ref{thm_bounds_part_func_regular}]
We use the test function $\phi=\phi_h$ as in \eqref{phi-h-test-fct} with $f(p)=p$ in the lower bound \eqref{part-fct-estimate-small-times} for the partition function to get, for all $h\leq r_{\Omega}$,
\begin{multline*}
  \sum_{j=1}^{\infty}e^{-\lambda_{j}t}\geq \frac{|\Omega|}{(4\pi t)^{\frac{d}{2}}}-\frac{1}{(4\pi t)^{\frac{d}{2}}}\left(\frac{t|\omega_h|}{h^2}+\int_{\omega_h}\left(1-\frac{\delta^2(x)}{h}\right)\,dx\right)\\
	\geq\frac{|\Omega|}{(4\pi t)^{\frac{d}{2}}}-\frac{|\omega_h|}{(4\pi t)^{\frac{d}{2}}}\left(\frac{t}{h^2}+1\right)\\
	=\frac{|\Omega|}{(4\pi t)^{\frac{d}{2}}}-\frac{|\partial\Omega|}{(4\pi t)^{\frac{d}{2}}}\left(\frac{t}{h}+h\right)+\frac{\left(h|\partial\Omega|-|\omega_h|\right)}{(4\pi t)^{d/2}}\left(\frac{t}{h^2}+1\right).
\end{multline*}
Choosing $\displaystyle h=\sqrt{t}$ and recalling that $\Omega\in\mathcal S$, the result immediately follows. 
\end{proof}




\subsection{Estimates for domains of class \texorpdfstring{$C^2$}{C2}: proofs}\label{sub:2:3}

As we shall see throughout this subsection, computations become more explicit if $\Omega$ is of class $C^2$. Before proving the main results of Subsection \ref{sec:1.5.3} (namely, Theorem \ref{smooth_1} and Corollary \ref{smooth_convex}), we need to recall some useful results on the tubular neighbourhood of the boundary of a $C^2$ domain. The $h$-tubular neighbourhood $\omega_h$ of $\partial\Omega$ was defined in \eqref{tubular_n}.

\begin{theorem}\label{tubular0}
Let $\Omega$ be a bounded domain in $\mathbb R^{d}$ of class $C^2$. Then there exists $h>0$ such that every point in $\omega_h$ has a unique nearest point on $\partial\Omega$. Moreover, the function $\delta$ is of class $C^2$ in $\omega_h$.
\end{theorem}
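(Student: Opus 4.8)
My plan is to prove this classical fact by building the normal collar of $\partial\Omega$ through the inverse function theorem and then reading off $\delta$ from the resulting coordinates. Since $\Omega$ is of class $C^2$, its boundary is a compact embedded $C^2$ hypersurface, so the inward unit normal $\nu\colon\partial\Omega\to S^{d-1}$ is of class $C^1$. I would introduce the normal map
\[
\Phi(y,t):=y+t\,\nu(y),\qquad (y,t)\in\partial\Omega\times\mathbb R,
\]
which is $C^1$, and compute in principal curvature directions that $\det D\Phi(y,t)=\prod_{i=1}^{d-1}\bigl(1-t\,\kappa_i(y)\bigr)$, where the $\kappa_i(y)$ are the principal curvatures of $\partial\Omega$ at $y$. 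Because $\partial\Omega$ is compact and $C^2$, the $\kappa_i$ are uniformly bounded, so $D\Phi(y,t)$ is invertible whenever $|t|$ is small, and $\Phi$ is then a local $C^1$-diffeomorphism near each point of $\partial\Omega\times\{0\}$.

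The first real step is to pass from this local statement to a uniform tubular neighbourhood, i.e.\ to find $h_0>0$ so that $\Phi$ is injective on $\partial\Omega\times(-h_0,h_0)$. I would do this by contradiction and compactness: otherwise there are distinct pairs $(y_n,t_n)\neq(y_n',t_n')$ with $t_n,t_n'\to0$ and $\Phi(y_n,t_n)=\Phi(y_n',t_n')$; extracting convergent subsequences $y_n\to y$, $y_n'\to y'$ (here compactness of $\partial\Omega$ is used) and letting $n\to\infty$ gives $y=y'$, contradicting the local injectivity of $\Phi$ near $(y,0)$. Combined with the invertibility of $D\Phi$, this makes $\Phi$ a $C^1$-diffeomorphism of the collar $\partial\Omega\times(-h_0,h_0)$ onto an open neighbourhood $N$ of $\partial\Omega$ in $\mathbb R^d$.

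Next I would identify $\delta$ on the inner half of the collar. For $x\in\Omega$ near $\partial\Omega$, any point $z\in\partial\Omega$ realizing $\text{dist}(x,\partial\Omega)$ (one exists, as $\partial\Omega$ is compact) is a critical point of $w\mapsto|x-w|^2$ on the $C^1$ manifold $\partial\Omega$, hence $x-z\perp T_z\partial\Omega$; the normal space being one-dimensional and $x$ lying on the inner side, this forces $x=z+\delta(x)\,\nu(z)=\Phi\bigl(z,\delta(x)\bigr)$. Fixing $h\in(0,h_0)$, every $x\in\omega_h$ has $\delta(x)\le h<h_0$, so $(z,\delta(x))$ lies in the collar, $x\in N$, and the injectivity of $\Phi$ there makes the nearest point $z=:\pi(x)$ unique, with $(\pi(x),\delta(x))=\Phi^{-1}(x)$. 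Hence $\pi$ and $\delta$ are $C^1$ on $\omega_h$, and $\nabla\delta(x)=\nu(\pi(x))$.

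Finally, the claim $\delta\in C^2$ is the point that needs care, since the inverse function theorem applied to the merely $C^1$ map $\Phi$ yields only $\delta\in C^1$ directly; this bootstrap, rather than the collar construction, will be the main obstacle. I would use the identity $\nabla\delta=\nu\circ\pi$ just derived: $\nu$ is $C^1$ on $\partial\Omega$ and $\pi$ is $C^1$ on $\omega_h$, so $\nabla\delta$ is $C^1$ and hence $\delta\in C^2(\omega_h)$. (Equivalently, one may note that the tangential part of $\nabla(\delta^2)=\nabla|x-\pi(x)|^2$ vanishes because $x-\pi(x)$ is normal to $\partial\Omega$ at $\pi(x)$, so $\nabla(\delta^2)=2\,(x-\pi(x))\in C^1$, giving $\delta^2\in C^2$ and, since $\delta>0$ on $\omega_h$, $\delta=\sqrt{\delta^2}\in C^2$.) All of the above is standard, and the complete details can be found in references on distance functions such as Gilbarg--Trudinger or Krantz--Parks, and in Federer's theory of sets of positive reach.
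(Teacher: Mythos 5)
The paper itself does not prove this theorem; it simply cites Krantz--Parks \cite{krantz}, Delfour--Zol\'esio \cite{shapes}, and Gilbarg--Trudinger \cite{gitr} for it. Your argument is correct and is essentially the standard tubular-neighbourhood proof found in those references: build the normal exponential map $\Phi(y,t)=y+t\,\nu(y)$, use the inverse function theorem plus compactness to obtain a uniform collar on which $\Phi$ is a $C^1$ diffeomorphism, identify the unique foot point $\pi(x)$ via the first-order condition for the nearest-point problem, and then bootstrap regularity through the identity $\nabla\delta=\nu\circ\pi$ (equivalently $\nabla(\delta^2)=2\bigl(x-\pi(x)\bigr)$), which is $C^1$ because $\nu$ and $\pi$ are. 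The compactness argument for global injectivity, the normality of $x-\pi(x)$ to $T_{\pi(x)}\partial\Omega$, and the gain of one derivative in the bootstrap are all handled correctly; no gaps.
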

We refer to \cite{krantz} for the proof of Theorem \ref{tubular0}. (See also \cite[Ch.6, Theorem 6.3]{shapes} and \cite[Lemma 14.16]{gitr}.) Throughout the rest of the paper we shall denote by $\bar h$ the maximal possible tubular radius of $\Omega$, which has been defined in \eqref{barh}. From Theorem \ref{tubular0} it follows that if $\Omega$ is of class $C^2$ then such an $\bar h$ exists and is positive.

Throughout the rest of this section, we denote by $h$ a positive number such that $0<h<\bar h$.

Let $x\in\partial\Omega$ and let $\kappa_1(x),...,\kappa_{d-1}(x)$ denote the principal curvatures of $\partial\Omega$ at $x$ with respect to the outward unit normal. We refer e.g., to \cite[Sec. 14.6]{gitr} for the definition and basic properties of the principal curvatures of $\partial\Omega$. In particular if $y\in\omega_{{h}}$ and $x\in\partial\Omega$ is the nearest point to $x$ on $\partial\Omega$, then
\begin{equation}\label{bound_curv}
1-\delta(x)\kappa_i(y)>0
\end{equation}
for all $i=1,...,N$ (see e.g., \cite[Lemma 2.2]{lewis}). 

The mean curvature $\mathcal H(x)$ of $\partial\Omega$ at a point $x$ is defined as
\begin{equation*}
\mathcal H(x):=\frac{1}{d-1}\sum_{i=1}^{d-1}\kappa_i(x).
\end{equation*}

We are now ready to prove Theorem \ref{smooth_1} and Corollary \ref{smooth_convex}.

\begin{proof}[Proof of Theorem \ref{smooth_1}]
Since $\Omega$ is of class $C^2$, in particular $\Omega$ belongs to the class $\mathcal S$ (see Proposition \ref{lipschitzprop}), hence Theorem \ref{main_thm_evsums-DirichletLaplacian} holds. 
Now we estimate $|\omega_h|$ for small $h$. Let $0<h<\bar h$. We shall denote by $s$ an element of $\partial\Omega$ viewed as an embedded $d-1$-dimensional manifold. We denote the induced metric on $\partial\Omega$ by $g_s$ and the induced $d-1$-dimensional volume form by $d\sigma(s)=|\det g_s|^{1/2}$. For $s\in \partial\Omega$, we shall denote by $\nu(s)$ the outward unit normal to $\partial\Omega$. It is well-known that the map $\Phi$ defined by
$$
\Sigma:=\partial\Omega\times (0,h)\ni(s,t)\mapsto\Phi(s,t)=s-t\nu(s)\in\omega_h
$$
is a diffeomorphism.  (See e.g., \cite[Sec. 2.4]{balinskybook}, and see also Theorem \ref{tubular0}.)
In particular, $t=\delta(s-t\nu(s))$. The metric induced by $\Phi$ on $\Sigma$ is then given by $G=g\circ (Id_{\partial\Omega}-t D\nu(s))^2+dt^2$, where $Id_{\partial\Omega}$ is the identity on the tangent space. The volume form $d\Sigma$ is given by
\begin{equation*}
d\Sigma=|\det G|^{1/2}=\prod_{i=1}^{d-1}(1-t\kappa_i(s))dtd\sigma(s).
\end{equation*}
Hence we can write
\begin{equation}\label{wh_coordinates}
|\omega_h|=\int_{\partial\Omega}\int_0^h\prod_{i=1}^{d-1}(1-t\kappa_i(s))dtd\sigma(s).
\end{equation}
Thanks to \eqref{bound_curv},
$$
\prod_{i=1}^{d-1}(1-t\kappa_i(s))\leq\left(\frac{1}{d-1}\sum_{i=1}^{d-1}(1-t\kappa_i(s))\right)^{d-1}=\left(1-t\mathcal H(s)\right)^{d-1}.
$$
Hence
\begin{equation}\label{est0}
|\omega_h|\leq\int_{\partial\Omega}\int_0^h\left(1-t\mathcal H(s)\right)^{d-1}dtd\sigma(s).
\end{equation}
Integrating by parts twice we see that
\begin{multline}\label{est00}
\int_0^h(1-t\mathcal H(s))^{d-1}dt\\
=h-\frac{d-1}{2}\mathcal H(s)h^2+\frac{(d-1)(d-2)}{2}\mathcal H^2(s)\int_0^h(t-h)^2(1-t\mathcal H(s))^{d-3}dt.
\end{multline}
Using $(t-h)^2\leq h^2$ in the integral on the right side of \eqref{est00} and integrating with respect to $t$ we get the inequality
\begin{equation*}
\int_0^h(1-t\mathcal H(s))^{d-1}dt\leq h-\frac{d-1}{2}\mathcal H(s)h^2(1-h\mathcal H(s))^{d-2}
\end{equation*}
and consequently
\begin{equation}\label{est000}
|\omega_h|\leq h|\partial\Omega|-\frac{d-1}{2}h^2\int_{\partial\Omega}\mathcal H(s)(1-h\mathcal H(s))^{d-2}d\sigma(s),
\end{equation}
with equality if $d=2$.

From \eqref{est000} we deduce that there exists a constant $C'$ which depends only on $d$, $|\partial\Omega|$, $\bar h$ and $\mathcal H$ such that
\begin{equation*}
|\omega_h|-h|\partial\Omega|\leq C'h^2
\end{equation*}
for all $h\leq\bar h$. Moreover for all $\bar h\leq h\leq r_{\Omega}$, $|\omega_h|-h|\partial\Omega|\leq\frac{|\omega_{r_{\Omega}}|}{\bar h^2}h^2$. It follows immediately from the definition \eqref{remainderR} of $R_k(h)$ and \eqref{hk} that
$$
R_k(h(k))\leq C''\left(\frac{2}{d+2}C_d\left(\frac{k}{|\Omega|}\right)^{2/d}h(k)^2+1\right)=2C''
$$
for all $k\geq |\Omega|r_{\Omega}^{-d}\left(\frac{d+2}{2C_d}\right)^{d/2}$, and hence (i) 
is valid with $C:=2C''$.

We next prove (ii). From \eqref{est0} it immediately follows that
\begin{equation}\label{develop}
|\omega_h|\leq h|\partial\Omega|+\frac{h^2}{d}\sum_{j=2}^d \binom{d}{j}(-1)^{j-1}h^{j-2}\int_{\partial\Omega}\mathcal H(s)^{j-1}d\sigma(s).
\end{equation}
By plugging \eqref{develop} into \eqref{remainderR} and setting $h=h(k)$ (see \eqref{hk} for the definition of $h(k)$), \eqref{remainder-f1} immediately follows. Formula \eqref{limit_r1} follows from a standard computation. This concludes the proof of (ii) and of the theorem.
\end{proof}

\begin{proof}[Proof of Corollary \ref{smooth_convex}]
The proof of point (i) is identical to that of point (i) of Theorem \ref{smooth_1} and is accordingly omitted. The proof of point (ii) is similar to that of \eqref{evsums-DirichletLaplacian15_v30-NEW} in Theorem \ref{thm_convex_first_eigenvalues}. First we note that from \eqref{est000} it follows that for all $h<\bar h$, $|\omega_h|\leq h|\partial\Omega|$. Hence inequality \eqref{evsums-DirichletLaplacian13-NEW-2} holds with $X=\frac{h|\partial\Omega|}{|\Omega|}$, for all $h<\bar h$. Then in \eqref{evsums-DirichletLaplacian13-NEW-2} we insert the optimal $X_0$ given by \eqref{optX0}, which is admissible only if $X_0\leq\frac{\bar h|\partial\Omega|}{|\Omega|}$, that is, if $k\geq|\Omega|\left(\frac{d+2}{2C_d}\frac{|\Omega|-2\bar h|\partial\Omega|}{\bar h^2|\Omega|}\right)^{\frac{d}{2}}$. This proves point (ii). As for point (iii), we note that if $\bar h\geq\frac{|\Omega|}{2|\partial\Omega|}$, then $X_0$ defined by \eqref{optX0} is admissible independently of $k$, that is, $X_0\leq\frac{\bar h|\partial\Omega|}{|\Omega|}$ for all positive integer $k$. This concludes the proof of point (iii) and of the corollary.

\end{proof}




\subsection{Estimates for planar sets: proofs}\label{sub:2:4}

In this subsection we present the proofs of the bounds contained in Theorem \ref{planar_thm}.

\begin{proof}[Proof of Theorem \ref{planar_thm}]
We start by proving (i). For a planar $C^2$ domain and $0<h<\bar h$, \eqref{wh_coordinates} immediately implies that
\begin{equation*}
|\omega_h|=h|\partial\Omega|-\frac{h^2}{2}\int_{\partial\Omega}\kappa(s)ds,
\end{equation*}
where $ds$ is the arc-length element and $\kappa(s)$ denotes the curvature of $\partial\Omega$ (the orientation is chosen according to the outer unit normal to $\partial\Omega$). Moreover, for a closed curve $\gamma$, the quantity $\int_{\gamma}\kappa(s)ds$ is the total curvature of the curve.
This quantity is in particular an integer multiple of $2\pi$, namely the winding number of the unit tangent vector about the origin. It is straightforward to see that
\begin{equation*}
\int_{\partial\Omega}\kappa(s)ds=2\pi(2-b),
\end{equation*}
where $b$ denotes the number of connected components of $\partial\Omega$. Hence for all $0<h<\bar h$,
\begin{equation}\label{wh_planar_C2}
|\omega_h|=h|\partial\Omega|-\pi(2-b)h^2.
\end{equation}
Formula \eqref{remainder_planarC2} follows by plugging \eqref{wh_planar_C2} into \eqref{remainderR} and by taking $h=h(k)$ (see \eqref{hk}). This concludes the proof of (i).

The proof of (ii) is identical to that of point (ii) of Corollary \ref{smooth_convex}; in fact in the case that $\partial\Omega$ has one or two connected components, then $|\omega_h|\leq h|\partial\Omega|$.

Point (iii) is a straightforward application of formula \eqref{evsums-DirichletLaplacian15_v30-NEW}.

Consider now (iv). We can assume without loss of generality that $\partial\Omega$ is connected. Let $n:=n_a+n_b$ be the number of angles of $\Omega$, and let $\left\{l_1,...,l_n\right\}$ denote the angle bisectors of consecutive angles. Let $o_i:=l_i\cap l_{i+1}$ ($o_n=l_n\cap l_1$) if such intersection is non-empty and let $O:=\left\{o_i\right\}_{i=1}^n$ (where we agree to delete from the set the element $o_i$ if $l_i\cap l_{i+1}$ is empty).  Then let
\begin{equation*}
\tilde h:=\min_{o_i\in O\cap\Omega}\delta(o_i).
\end{equation*}
Assume now that $0<h<\tilde h$. We want to compute $|\omega_h|$. A first approximation is clearly $|\omega_h|=h|\partial\Omega|+o(h)$. Let us denote by $s_1,...,s_n$ the length of the sides of the polygon. Then $h|\partial\Omega|=h (s_1+...+s_n)$ is the sum of the areas of $n$ rectangles with side lengths $h$ and $s_i$, $i=1,...,n$.  Under the assumptions that $0<h<\tilde h$, we note that in considering an angle $\alpha$ with $0<\alpha<\pi$ we have taken into account an additional portion of area which measures $h^2\cot(\alpha/2)$.  Conversely, in considering an angle $\beta$ with $\pi<\beta<2\pi$ we did not take into account a portion of the tubular neighbourhood, which is a circular sector of radius $h$ and width $\beta-\pi$.  Hence we have to add the corresponding area, namely $h^2(\beta-\pi)/2$. If then $\left\{\alpha_i\right\}_{i=1}^{n_a}$ denotes the set of the angles between $0$ and $\pi$ and $\left\{\beta_i\right\}_{i=1}^{n_b}$ denotes the set of the angles between $\pi$ and $2\pi$, we have
\begin{equation}\label{wh_polygonal}
|\omega_h|=h|\partial\Omega|-h^2\sum_{i=1}^{n_a}\cot(\alpha_i/2)+h^2\sum_{i=1}^{n_b}\frac{\beta_i-\pi}{2}.
\end{equation}
Plugging \eqref{wh_polygonal} into \eqref{remainderR} with $h=h(k)$
immediately yields \eqref{remainder_polygon}. This concludes the proof of (iv) and of the theorem.
\end{proof}




\section{Application of the AVP to the Neumann\texorpdfstring{\\}{ }Laplacian: proofs of the main results}\label{sec:3}

In this section we present the proofs of Theorems \ref{refinement_kroger_thm} and \ref{thm_bounds_neumann_eigenvalues}.





\begin{proof}[Proof of Theorem \ref{refinement_kroger_thm}]
We note that by applying the averaged variational principle \eqref{averaged-var-principle-Rieszmean2_0} to the Neumann problem, we obtain the analogue of Theorem \ref{dirichlet_laplacian_thm_general} where $\lambda_{k+1},\lambda_j$ are replaced by $\mu_{k+1},\mu_j$. Moreover the test function $\phi$ in Theorem \ref{dirichlet_laplacian_thm_general} can be chosen in $H^1(\Omega)\cap L^{\infty}(\Omega)$. The most natural choice is $\phi\equiv 1$. We deduce then that the averaged variational principle applied to the Neumann Laplacian provides an efficient way to recover Kr\"oger's result, as noted in \cite{EHIS16}. In fact \eqref{kroger_0} follows from \eqref{evsums-DirichletLaplacian1} if we replace $\lambda_j$ by $\mu_j$ and take $\phi\equiv 1$. Then, roughly speaking, Theorem \ref{refinement_kroger_thm} is a corollary of Theorem \ref{dirichlet_laplacian_thm_general} for Neumann eigenvalues.

The proof of \eqref{Neumann-Laplacian-ev-bound-normalized2} is actually contained in \cite[Theorem 1.1]{harrell_stubbe_18}. We note that \eqref{Neumann-Laplacian-ev-bound-normalized2} can be also obtained as a consequence of \eqref{general_AVP-DirichletLaplacian} in the proof of Theorem \ref{dirichlet_laplacian_thm_general} if we replace $\lambda_{k+1},\lambda_j$ with $\mu_{k+1},\mu_j$ and take $\phi\equiv 1$, and by applying a refinement
of Young's inequality (see \cite[Appendix A]{harrell_stubbe_18}). Inequalities \eqref{Neumann-Laplacian-ev-bound-normalized3} follow  immediately from Corollary \ref{quadratic-eigenvalue-inequality-general} if we replace $\lambda_k$ by $\mu_k$ and take $\phi\equiv 1$ (hence $\rho(\phi)\equiv 1$). Inequality \eqref{riesz_means_neumann} follows from \eqref{Riesz-mean-ineq-DirichletLaplacian} if we replace $\lambda_j$ by $\mu_j$ and take $\phi\equiv 1$. For the partition function, we obtain \eqref{part_func_neumann} by replacing $\lambda_j$ by $\mu_j$ and using  $\phi\equiv 1$ in \eqref{part-fct-estimate-small-times}. In particular we observe that a lower bound for the trace of the Neumann heat kernel is given by the partition function of the free particle.
\end{proof}





We now turn our attention to the proof of Theorem \ref{thm_bounds_neumann_eigenvalues}. In order to obtain asymptotically sharp lower bounds for means of Neumann eigenvalues, we shall exploit the averaged variational principle applied to the Dirichlet Laplacian as in Section \ref{sec:2} with test functions given in terms of Neumann eigenfunctions. Doing so, we obtain lower bounds for the Riesz mean $\sum_{j}(z-\lambda_j)_+$ in terms of the Riesz mean $\sum_{j}(z-\mu_j)_+$ with a remainder of the correct order. Then we use the semiclassically sharp upper bounds for Riesz means of the Dirichlet Laplacian to obtain upper bounds for Riesz means of Neumann Laplacian, which turn out to be equivalent to lower bounds for averages.  In the remainder of this section $\Omega$ is assumed to be a bounded domain in $\mathbb R^d$ of class $C^2$, $\bar h$ denotes the maximal possible size of a tubular neighbourhood about $\partial\Omega$ (see \eqref{barh}), and $\kappa_i(x)$, $i=1,...,d-1$, denotes the principal curvatures at $x\in\partial\Omega$.

As already mentioned, in order to prove Theorem \ref{thm_bounds_neumann_eigenvalues} we will apply the averaged variational principle with trial functions of the form $f_j(x)=\phi_h(x)v_j(x)$ with $\phi_h$ as in \eqref{phi-h-test-fct} and $v_j$ the Neumann eigenfunctions. This indicates that we deal with the integral of the spectral function $\sum_{j=1}^k v_j^2$ in $\omega_h$. As we shall see, we need uniform control on the spectral function near the boundary. Before proving Theorem \ref{thm_bounds_neumann_eigenvalues} we recall some preliminary results.

The first result provides an estimate of the $L^{\infty}$ norm of the spectral function far from the boundary.
\begin{theorem}\label{thm_safarov_spectral_function}
Let $\Omega$ be a bounded domain in $\mathbb R^d$ such that $H^1(\Omega)\subset L^2(\Omega)$ is compact. Then for all $\mu>0$,
\begin{multline}\label{ineq_spec_func}
\sum_{\mu_j\leq\mu} v_j^2(x)\leq (2\pi)^{-d}\omega_d\mu^{d/2}\\
+\frac{d(d+2)(2\pi)^{-d}\sqrt[d+1]{3}\omega_d(2\pi^{-1}(d+2)\sqrt[d+1]{3}+1)}{\delta(x)}\left(\mu^{1/2}+\frac{(d+2)\sqrt[d+1]{3}}{\delta(x)}\right)^{d-1}.
\end{multline}
\end{theorem}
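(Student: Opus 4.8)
The plan is to reduce \eqref{ineq_spec_func} to a comparison between the on-diagonal spectral function of $-\Delta^N_\Omega$ and that of the free Laplacian on $\mathbb R^d$, and then to convert that comparison into a pointwise estimate by a Fourier Tauberian theorem in the spirit of Safarov. Fix $x\in\Omega$ and set $r:=\delta(x)$, so that the ball $B(x,r)$ lies inside $\Omega$. Write $N_x(\lambda):=\sum_{\mu_j\le\lambda}v_j(x)^2$ for the on-diagonal spectral kernel $e(x,x,\lambda)$ of $-\Delta^N_\Omega$; it is non-decreasing, of polynomial growth, and vanishes on $(-\infty,0)$, and the left side of \eqref{ineq_spec_func} is exactly $N_x(\mu)$. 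Let $N_0(\lambda):=(2\pi)^{-d}\omega_d\lambda_+^{d/2}$ be the corresponding free on-diagonal spectral function, which produces the leading term of \eqref{ineq_spec_func}, with $N_0'(\lambda)=\tfrac d2(2\pi)^{-d}\omega_d\lambda^{d/2-1}$ for $\lambda>0$.

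The first step is the finite-propagation-speed identity: for $|t|<r$ one has $\cos\!\big(t\sqrt{-\Delta^N_\Omega}\big)(x,x)=\cos\!\big(t\sqrt{-\Delta_{\mathbb R^d}}\big)(x,x)$. Indeed, if $f$ is supported in a small ball about $x$, then $\cos(t\sqrt{-\Delta^N_\Omega})f$ is supported within distance $|t|$ of $\mathrm{supp}\,f$, so for such $t$ the wave never reaches $\partial\Omega$ and therefore coincides with the free propagation of the zero-extension of $f$; a limiting argument on the diagonal gives the stated identity. Equivalently, the distributional cosine transforms of the measures $dN_x(\lambda)$ and $dN_0(\lambda)$ agree on $(-r,r)$.

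The second step is the quantitative Tauberian argument. Pick a fixed even mollifier $\psi\ge 0$ with $\int\psi=1$ and $\widehat\psi$ supported in $(-1,1)$ — for instance a suitable integer power of the Fej\'er kernel, chosen with enough decay in $\lambda$ for the dimension at hand — and rescale it to $\psi_r(\lambda):=r\,\psi(r\lambda)$, so that $\widehat{\psi_r}$ is supported in $(-r,r)$. By the first step, $\widehat{dN_x}\,\widehat{\psi_r}=\widehat{dN_0}\,\widehat{\psi_r}$, hence $N_x*\psi_r\equiv N_0*\psi_r$ (the additive constant vanishes since both sides tend to $0$ at $-\infty$). Standard Tauberian manipulations exploiting the monotonicity of $N_x$ and the explicit growth of $N_0$ then let one pass from the smoothed identity to a one-sided bound on $N_x(\mu)$ itself, of the form $N_x(\mu)\le N_0(\mu)+\tfrac{C}{r}\big(\mu^{1/2}+\tfrac{C'}{r}\big)^{d-1}$, which is exactly \eqref{ineq_spec_func}; the explicit constants $C,C'$ (carrying the factor $\sqrt[d+1]{3}$ and the $(d+2)$'s) emerge from optimizing a dilation parameter in the rescaled mollifier, balancing the width of its support against the size of its moments.

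The main obstacle is this last quantitative step: one must commit to a concrete mollifier and carefully track every constant through (a) the bound on $N_x(\mu)$ minus its mollification $N_x*\psi_r(\mu)$, controlled via monotonicity of $N_x$ together with the derivative of $N_0$, and (b) the exact vanishing of $N_x*\psi_r-N_0*\psi_r$ coming from the Fourier-support condition. The appearance of $\sqrt[d+1]{3}$ signals that the dilation parameter has been tuned to equalize two error contributions whose orders are dictated by $N_0(\lambda)\sim\lambda^{d/2}$, so the constants are pinned down by a routine one-variable optimization. Once that is carried out no further ideas are needed; alternatively, one simply invokes Safarov's Fourier Tauberian theorems directly, inserting the data $T=r$ and $N_0(\lambda)=(2\pi)^{-d}\omega_d\lambda^{d/2}$.
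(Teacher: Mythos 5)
Your proposal is correct and essentially matches the paper's: the paper simply cites Safarov \cite[Corollary 3.1]{safarov_tauberian} for this bound, which is exactly the route you offer at the end of your proposal. Your preceding sketch (finite propagation speed for the Neumann wave operator giving agreement of the cosine transforms of $dN_x$ and $dN_0$ on $(-\delta(x),\delta(x))$, followed by a quantitative Fourier Tauberian argument with an optimized mollifier to produce the explicit constants) accurately describes what lies behind Safarov's corollary, so there is no gap, only a difference in how much of the cited result's internal mechanism is unpacked.
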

Theorem \ref{thm_safarov_spectral_function} follows from \cite[Corollary 3.1]{safarov_tauberian}. In order to prove Theorem \ref{thm_bounds_neumann_eigenvalues} we also need to control the $L^{\infty}$ norm of the spectral function in a tubular neighbourhood of the boundary of the size $O(k^{-1/d})$. We recall the following theorem.
\begin{theorem}
Let $\Omega$ be a bounded domain in $\mathbb R^d$ of class $C^2$. Let $\alpha>\frac{1}{2}$ and $0<\beta<\frac{1}{\sqrt{\alpha}}$ be fixed real numbers. Then for all $\mu>0$ such that
\begin{equation}\label{lower_mu}
\mu\geq\max\left\{ 4\beta^2 \bar h^{-2},\frac{4\alpha^2\beta^2}{(2\alpha-1)^2}\max_{\substack{x\in\partial\Omega\\0\leq h\leq\bar h/2}}\left|\sum_{i=1}^{d-1}\frac{h\kappa_i(x)}{1-h\kappa_i(x)}\right|^2\right\},
\end{equation}
we have
\begin{equation}\label{xu_sogge}
\max_{\left\{x\in\Omega:\delta(x)\leq \frac{\beta}{\sqrt{\mu}}\right\}}\sum_{\mu_j\leq\mu}v_j(x)^2\leq\frac{1}{(1-\alpha\beta^2)^2}\max_{\left\{x\in\Omega:\delta(x)=\frac{\beta}{\sqrt{\mu}}\right\}}\sum_{\mu_j\leq\mu}v_j(x)^2.
\end{equation}
\end{theorem}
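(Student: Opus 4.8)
The plan is to deduce the statement from a maximum-principle estimate for the local spectral function
$F(x):=\sum_{\mu_j\le\mu}v_j(x)^2$
on the boundary collar $\omega_{h_0}:=\{x\in\Omega:\delta(x)\le h_0\}$, with $h_0:=\beta/\sqrt{\mu}$. The first hypothesis $\mu\ge4\beta^2\bar h^{-2}$ gives $h_0\le\bar h/2$, so by Theorem \ref{tubular0} the collar is a genuine tubular neighbourhood of $\partial\Omega$ carrying Fermi coordinates $(s,t)\in\partial\Omega\times[0,h_0]$ with $t=\delta$; in particular $\partial\omega_{h_0}=\partial\Omega\cup\Gamma_{h_0}$ with $\Gamma_{h_0}:=\{\delta=h_0\}\subset\Omega$, and $\delta\in C^2(\overline{\omega_{h_0}})$. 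With this notation the assertion reads $\max_{\overline{\omega_{h_0}}}F\le(1-\alpha\beta^2)^{-2}\max_{\Gamma_{h_0}}F$.

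I would first record the two properties of $F$ that drive the argument. Since each $v_j$ satisfies the Neumann condition, $\partial_\nu F=2\sum_{\mu_j\le\mu}v_j\,\partial_\nu v_j=0$ on $\partial\Omega$. Since $\Delta(v_j^2)=2|\nabla v_j|^2-2\mu_j v_j^2$ and the cross terms cancel upon summing over the diagonal, $-\Delta F=2\sum_{\mu_j\le\mu}\mu_j v_j^2-2\sum_{\mu_j\le\mu}|\nabla v_j|^2\le2\mu F$ pointwise in $\Omega$. On a $C^2$ domain each $v_j$ lies in $H^2(\Omega)\cap C^\infty(\Omega)$ and $F\in C^1(\overline\Omega)$, which is ample regularity for the elliptic machinery below (in particular $\Omega$ satisfies a uniform interior ball condition, so Hopf's lemma applies at $\partial\Omega$).

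Next I would introduce the explicit barrier $\eta(x):=(1-\alpha\mu\,\delta(x)^2)^2$ on $\overline{\omega_{h_0}}$. Since $\alpha\mu\,\delta^2\le\alpha\mu h_0^2=\alpha\beta^2<1$ on the collar, $\eta$ is positive there, it is a nonincreasing function of $\delta$ with $\eta\equiv1$ on $\partial\Omega$ and $\eta\equiv(1-\alpha\beta^2)^2$ on $\Gamma_{h_0}$, and $\nabla\eta$ vanishes on $\partial\Omega$ (so $\partial_\nu\eta=0$ there). The crucial point — and what I expect to be the main obstacle — is to verify that $\eta$ is a supersolution of $-\Delta-2\mu$ on $\omega_{h_0}$, i.e.\ $-\Delta\eta\ge2\mu\eta$. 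In Fermi coordinates, $\Delta\eta=\eta''(\delta)+\eta'(\delta)\,\Delta\delta$ with $\Delta\delta=-\sum_{i=1}^{d-1}\kappa_i/(1-\delta\kappa_i)$, so the curvature enters only through $\eta'(\delta)\,\Delta\delta$; the second hypothesis on $\mu$, written via $M:=\max_{x\in\partial\Omega,\,0\le h\le\bar h/2}\bigl|\sum_i h\kappa_i(x)/(1-h\kappa_i(x))\bigr|$, is precisely the quantitative control needed to dominate that term by $-\eta''-2\mu\eta$ uniformly for $\delta\in[0,h_0]$. Carrying this out is an elementary but delicate one-variable estimate, and it is here that both bounds on $\mu$ and the constraints $\alpha>\frac{1}{2}$, $\beta<1/\sqrt{\alpha}$ get consumed.

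Finally I would run the comparison on $w:=F/\eta$. Combining $-\Delta F\le2\mu F$ with $-\Delta\eta\ge2\mu\eta$ and $\eta>0$ yields $-\Delta w-2\eta^{-1}\nabla\eta\cdot\nabla w\le0$ in $\omega_{h_0}$, an elliptic inequality with bounded coefficients and no zeroth-order term; by the strong maximum principle $w$ attains its maximum over $\overline{\omega_{h_0}}$ on $\partial\omega_{h_0}$, and that maximum cannot sit on $\partial\Omega$ unless $w$ is constant, since there $\partial_\nu w=(\eta\,\partial_\nu F-F\,\partial_\nu\eta)/\eta^2=0$, contradicting the strict inequality in Hopf's lemma. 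Hence $\max_{\overline{\omega_{h_0}}}(F/\eta)=\max_{\Gamma_{h_0}}(F/\eta)$; since $\eta$ is constant equal to $(1-\alpha\beta^2)^2$ on $\Gamma_{h_0}$ and $\eta\le1$ throughout the collar, this gives $F(x)\le(1-\alpha\beta^2)^{-2}\max_{\Gamma_{h_0}}F$ for every $x\in\overline{\omega_{h_0}}$, which is exactly the claim after taking the supremum over $x$.
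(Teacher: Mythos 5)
Your framework---compare $F:=\sum_{\mu_j\le\mu}v_j^2$ against a radial barrier $\eta$ via the maximum principle, exploiting $\partial_\nu F=0$ and $-\Delta F\le 2\mu F$ together with Hopf's lemma---is a sensible one and is in the spirit of the Xu--Sogge estimate the paper cites for this step. (The paper itself does not supply a self-contained proof; it only says the result follows from Proposition~2.2 of \cite{xu_bdry} ``with very few modifications,'' with the curvature bound \eqref{lower_mu} arising from tracking constants in that argument.) The gap is precisely at the step you yourself flagged as the main obstacle: the supersolution inequality $-\Delta\eta\ge 2\mu\eta$ for $\eta=(1-\alpha\mu\delta^2)^2$ does \emph{not} hold on the whole collar under the stated hypotheses, and the failure is not a matter of the curvature term. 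It already fails in the curvature-free model.

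Indeed, set $\Delta\delta\equiv 0$ and write $a=\alpha\mu$, $u=a\delta^2\in[0,\alpha\beta^2]$. Then $\eta''=-4a+12a^2\delta^2$, so
\begin{equation*}
-\Delta\eta-2\mu\eta \;=\; 4a-12a^2\delta^2-2\mu(1-a\delta^2)^2 \;=\; 2\mu\Bigl(2\alpha(1-3u)-(1-u)^2\Bigr).
\end{equation*}
This is positive at $u=0$ (since $\alpha>\tfrac12$), but the bracket is concave in $u$ with a negative slope $2-6\alpha$ at $u=0$, and it becomes negative well before $u$ reaches the allowed endpoint $\alpha\beta^2$. Concretely, take $\alpha=1$, $\beta=0.9$: at $\delta=h_0$ one has $u=\alpha\beta^2=0.81$ and
\begin{equation*}
2\alpha(1-3u)-(1-u)^2 \;=\; 2(1-2.43)-(0.19)^2 \;\approx\; -2.90 \;<\;0,
\end{equation*}
so $-\Delta\eta<2\mu\eta$ near $\Gamma_{h_0}$, and $w=F/\eta$ is not a subsolution of an operator with nonnegative-definite structure there. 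Hence the strong maximum principle and Hopf comparison cannot be invoked on $\omega_{h_0}$ as written. The hypotheses \eqref{lower_mu} do not repair this: they control $\mu$ from below in terms of $\bar h$ and the curvature quantity $M$, but the inequality above is violated independently of curvature, purely because the quadratic-squared profile decays too steeply relative to $\cos$ when $\alpha\beta^2$ is not small (roughly, when $\alpha\beta^2\gtrsim 1-3\alpha+\sqrt{9\alpha^2-4\alpha}$). Equivalently, one finds $-\Delta(\eta^2)-2\mu\eta^2=2\eta(-\Delta\eta-\mu\eta)-2|\nabla\eta|^2$, and the cross term $2|\nabla\eta|^2=8\alpha^2\mu^2\delta^2$ overwhelms the slack $2\eta(-\Delta\eta-\mu\eta)$ as $\delta\to h_0$. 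So either the barrier must be replaced (e.g.\ a trigonometric profile such as a shifted cosine, with a more careful comparison), or a genuinely different mechanism from \cite{xu_bdry}---where the estimate is derived for the spectral function directly, not by squaring a linear barrier---must be carried through. As it stands, the proposed proof is incomplete at this step.
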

The proof can be carried out with very few modifications in the same way as that of Proposition 2.2 of \cite{xu_bdry} (see also \cite{sogge_bdry}). We remark that lower bounds for values of $\mu$ for which \eqref{xu_sogge} holds depend on $\bar h$ and on $L^{\infty}$ estimates on the coefficients of lower order terms in the expression of the Laplace operator in local coordinates inside a tubular neighbourhood. In particular, for a domain of class $C^2$, if $\partial\Omega_h=\left\{x\in\Omega:\delta(x)=h\right\}$ denotes the inner $h$-parallel set of $\partial\Omega$ , then for all $0<h<\bar h$ and all $x\in\omega_{\bar h}$ with $\delta(x)=h$ we have
\begin{equation}\label{local}
\Delta u(x)=\Delta_{\partial\Omega_h}u(x)+\sum_{i=1}^{d-1}\frac{\kappa_i(y)\delta(x)}{1-\kappa_i(y)\delta(x)}\frac{\partial u}{\partial\nu}(x)+\frac{\partial^2u}{\partial\nu^2}(x),
\end{equation}
where
$y$ denotes the nearest point on $\partial\Omega$ to $x$, $\nu$ is the outer unit normal to $y$, and
$\Delta_{\partial\Omega_h}$ denotes the Laplace-Beltrami operator on $\partial\Omega_h$. In particular, exploiting formula \eqref{local} in the proof of Proposition 2.2 of \cite{xu_bdry} (and hence in the proof of \eqref{xu_sogge}) yields the explicit bound \eqref{lower_mu}.

It is also useful to state the following lemma.
\begin{lemma} Let $u,\phi:\Omega\rightarrow \mathbb{C}$ such that $-\Delta u=\lambda u$ in $L^2(\Omega)$ for some $\lambda\in\mathbb{R}$, $\phi u\in H^{1}_{0}(\Omega)$ and $\displaystyle \phi^2 u\frac{\partial\,u}{\partial\,n}$ vanishes on the boundary $\partial \Omega$. Then
\begin{multline}\label{grad-phi-u-identity}
  \int_{\Omega}|\nabla \phi u|^2\,dx\\
	=\int_{\Omega}|\nabla\phi|^2|u|^2\,dx+\lambda \int_{\Omega}|\phi u|^2\,dx+\frac{1}{2}\int_{\Omega}(\bar{\phi} \nabla \phi-\phi \nabla \bar{\phi})(u \nabla \bar{u}-\bar{u} \nabla u)\,dx.
\end{multline}
In particular, if one of the functions $u,\phi$ is real-valued, then the last term vanishes.
\begin{proof}
The proof follows by standard integration by parts and is therefore omitted.
\end{proof}
\end{lemma}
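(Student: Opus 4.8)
The plan is to derive \eqref{grad-phi-u-identity} from the Leibniz rule together with a single integration by parts; essentially all the work is bookkeeping of complex conjugates. First I would expand pointwise, using $\nabla(\phi u)=u\nabla\phi+\phi\nabla u$:
\[
|\nabla(\phi u)|^2=|u|^2|\nabla\phi|^2+|\phi|^2|\nabla u|^2+\bar\phi\,\nabla\phi\cdot(u\nabla\bar u)+\phi\,\nabla\bar\phi\cdot(\bar u\nabla u),
\]
where the last two summands are complex conjugates of one another. Writing $u\nabla\bar u=\tfrac12\nabla(|u|^2)+\tfrac12(u\nabla\bar u-\bar u\nabla u)$, $\bar u\nabla u=\tfrac12\nabla(|u|^2)-\tfrac12(u\nabla\bar u-\bar u\nabla u)$ and, likewise, $\bar\phi\nabla\phi=\tfrac12\nabla(|\phi|^2)+\tfrac12(\bar\phi\nabla\phi-\phi\nabla\bar\phi)$, $\phi\nabla\bar\phi=\tfrac12\nabla(|\phi|^2)-\tfrac12(\bar\phi\nabla\phi-\phi\nabla\bar\phi)$, those two summands collapse to
\[
\tfrac12\,\nabla(|\phi|^2)\cdot\nabla(|u|^2)+\tfrac12\,(\bar\phi\nabla\phi-\phi\nabla\bar\phi)\cdot(u\nabla\bar u-\bar u\nabla u),
\]
the mixed cross-terms cancelling. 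The second piece here is exactly the antisymmetric term of \eqref{grad-phi-u-identity}, and it vanishes pointwise when $\phi$ or $u$ is real-valued, which already settles the last assertion of the lemma.

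It then remains to integrate over $\Omega$ and to show that $\int_\Omega\big(|\phi|^2|\nabla u|^2+\tfrac12\nabla(|\phi|^2)\cdot\nabla(|u|^2)\big)\,dx=\lambda\int_\Omega|\phi u|^2\,dx$. Since $\nabla(|\phi|^2)$ is real, $\nabla(|u|^2)=2\,\mathrm{Re}(\bar u\nabla u)$, and $\nabla(|\phi|^2\bar u)=\bar u\nabla(|\phi|^2)+|\phi|^2\nabla\bar u$, the left-hand side equals $\mathrm{Re}\int_\Omega\nabla u\cdot\nabla(|\phi|^2\bar u)\,dx$. Integrating by parts gives $\int_\Omega\nabla u\cdot\nabla(|\phi|^2\bar u)\,dx=-\int_\Omega|\phi|^2\bar u\,\Delta u\,dx+\int_{\partial\Omega}|\phi|^2\bar u\,\partial_n u\,d\sigma=\lambda\int_\Omega|\phi|^2|u|^2\,dx$, the boundary integral vanishing by the hypotheses: $|\phi|^2\bar u=\phi\,\overline{\phi u}$ has null trace on $\partial\Omega$ because $\phi u\in H^1_0(\Omega)$ (and, at all events, $\phi^2u\,\partial_n u$ is assumed to vanish there). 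Taking real parts and combining with the pointwise identity yields \eqref{grad-phi-u-identity}.

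The only obstacle is a mild one: one must keep the conjugates straight so that the two complex cross-terms genuinely reassemble into the real symmetric piece $\tfrac12\nabla(|\phi|^2)\cdot\nabla(|u|^2)$ plus the purely imaginary-type antisymmetric remainder, and one must check that the multiplier $|\phi|^2\bar u$ is regular enough (which follows from $\phi\in H^1\cap L^\infty$ and $\phi u\in H^1_0\cap L^\infty$) for the integration by parts to be legitimate with no surviving boundary term. If one prefers to avoid introducing $|\phi|^2\bar u$, the same conclusion follows by integrating $\tfrac12\int_\Omega\nabla(|\phi|^2)\cdot\nabla(|u|^2)$ by parts directly and invoking $\Delta(|u|^2)=2|\nabla u|^2-2\lambda|u|^2$, after which the boundary term $\tfrac12\int_{\partial\Omega}|\phi|^2\,\partial_n(|u|^2)\,d\sigma=\int_{\partial\Omega}\mathrm{Re}\big(|\phi|^2\bar u\,\partial_n u\big)\,d\sigma$ vanishes for exactly the same reason.
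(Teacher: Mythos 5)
Your computation is correct and is precisely the ``standard integration by parts'' argument that the paper alludes to and omits: expand $\nabla(\phi u)$ by the Leibniz rule, split the cross-terms into the real symmetric part $\tfrac12\nabla(|\phi|^2)\cdot\nabla(|u|^2)$ and the antisymmetric remainder, then integrate $\nabla u\cdot\nabla(|\phi|^2\bar u)$ by parts and use $-\Delta u=\lambda u$, with the boundary contribution killed by $\phi u\in H^1_0(\Omega)$ (equivalently by the stated vanishing of $\phi^2 u\,\partial_n u$). The observation that the antisymmetric piece vanishes pointwise when $\phi$ or $u$ is real also settles the last assertion, so nothing is missing.
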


We are now ready to prove Theorem \ref{thm_bounds_neumann_eigenvalues}.

\begin{proof}[Proof of Theorem \ref{thm_bounds_neumann_eigenvalues}]

Let $\left\{\lambda_j\right\}_{j=1}^{\infty}$ be the set of Dirichlet eigenvalues on $\Omega$, repeated according their multiplicity, and $\left\{u_j\right\}_{j=1}^{\infty}$ be the corresponding orthonormal set of eigenfunctions in $L^2(\Omega)$. Let $\left\{v_j\right\}_{j=1}^{\infty}$ be the orthonormal set of Neumann eigenfunctions in $L^2(\Omega)$ associated with $\left\{\mu_j\right\}_{j=1}^{\infty}$.

We apply the averaged variational principle \eqref{averaged-var-principle-Rieszmean2_0} for the Dirichlet Laplacian with trial functions of the form $f_j(x)=\phi(x)v_j(x)$ for $j\in\mathbb N$ and $\phi\in H^1_0(\Omega)\cap L^{\infty}(\Omega)$ with $\|\phi\|_{\infty}=1$. The measure space for the averaging will be $\mathbb N$ and $d\mu$ in \eqref{averaged-var-principle-Rieszmean2_0} will be the standard counting measure on $\mathbb N$, while $d\mu_0=1_J d\mu $ where $J\subseteq\mathbb N$ and $1_J$ is the characteristic function of $J$. We obtain
\begin{equation}\label{AVP-dir-neu-1}
\sum_{j=1}^k(\lambda_{k+1}-\lambda_j)\int_{\Omega}\phi^2(x)u_j(x)^2dx
\geq\sum_{j\in J}\left(\lambda_{k+1}\int_{\Omega}\phi^2v_j^2dx-\int_{\Omega}|\nabla(\phi v_j)|^2dx\right).
\end{equation}
For the right side of \eqref{AVP-dir-neu-1} we deduce from \eqref{grad-phi-u-identity} that
\begin{multline}\label{AVP-dir-neu-2}
\sum_{j\in J}\left(\lambda_{k+1}\int_{\Omega}\phi^2v_j^2dx-\int_{\Omega}|\nabla(\phi v_j)|^2dx\right)\\
=\sum_{j\in J}\left(\lambda_{k+1}\int_{\Omega}\phi^2v_j^2dx-\int_{\Omega}|\nabla\phi |^2v_j^2dx-\mu_j\int_{\Omega}\phi^2v_j^2dx	\right)\\
\geq \sum_{j\in J}\left(\lambda_{k+1}\int_{\Omega}\phi^2v_j^2dx-\mu_j-\int_{\Omega}|\nabla\phi |^2v_j^2dx\right),
\end{multline}
where we have used the fact that since $\Omega$ is of class $C^2$, $v_j\in H^2(\Omega)$, so $-\Delta v_j=\mu_j v_j$ in $L^2(\Omega)$. For the right side of \eqref{AVP-dir-neu-1} we have, since $\|v_j\|_2^2=1$ and $\|\phi\|_{\infty}=1$,
\begin{equation}\label{AVP-dir-neu-3}
\sum_{j=1}^k(\lambda_{k+1}-\lambda_j)\int_{\Omega}\phi(x)^2u_j(x)^2dx \leq\sum_{j=1}^k(\lambda_{k+1}-\lambda_j).
\end{equation}
From \eqref{AVP-dir-neu-1}, \eqref{AVP-dir-neu-2} and \eqref{AVP-dir-neu-3} we deduce that
\begin{equation}\label{AVP-dir-neu-4}
\sum_{j=1}^k(\lambda_{k+1}-\lambda_j)\geq \sum_{j}\left(\lambda_{k+1}-\mu_j-\lambda_{k+1}\int_{\Omega}(1-\phi^2)v_j^2dx-\int_{\Omega}|\nabla\phi|^2v_j^2dx\right)_+.
\end{equation}
Since we may replace $\lambda_{k+1}$ by any $z\in [\lambda_k,\lambda_{k+1}]$, the bound \eqref{AVP-dir-neu-4} is equivalent to the following upper bound on the Neumann Riesz mean.
\begin{equation*}
\sum_{j}(z-\mu_j)_+\leq \sum_{j}(z-\lambda_j)_+ +z\sum_{\mu_j\leq z}\int_{\Omega}(1-\phi^2)v_j^2dx+\sum_{\mu_j\leq z}\int_{\Omega}|\nabla\phi|^2v_j^2dx.
\end{equation*}
Let $\phi=\phi_h$ where $\phi_h$ is defined by \eqref{phi-h-test-fct} with $f(p)=\sin(2p/\pi)$. A straightforward computation shows that
\begin{multline}\label{riesz-neu-2}
\sum_{j}(z-\mu_j)_+\leq \sum_{j}(z-\lambda_j)_+ +\left(z+\frac{\pi^2}{4h^2}\right)|\omega_h|\left(\max_{x\in\bar\omega_h}\sum_{\mu_j\leq z} v_j^2(x)\right)\\
= \sum_{j}(z-\lambda_j)_+ +\left(zh+\frac{\pi^2}{4h}\right)|\partial\Omega|\left(\max_{x\in\bar\omega_h}\sum_{\mu_j\leq z} v_j^2(x)\right) \\+\left(z+\frac{\pi^2}{4h^2}\right)(|\omega_h|-h|\partial\Omega|)\left(\max_{x\in\bar\omega_h}\sum_{\mu_j\leq z} v_j^2(x)\right).
\end{multline}
To optimize $\left(zh+\pi^2/4h\right)$ with respect to $h$, we take $h=h(z)=\frac{\pi}{2\sqrt{z}}$ in \eqref{riesz-neu-2} and obtain
\begin{equation}\label{riesz-neu-3}
\sum_{j}(z-\mu_j)_+\leq \sum_{j}(z-\lambda_j)_+ +\pi\sqrt{z}|\partial\Omega|\left(\max_{x\in\bar\omega_h}\sum_{\mu_j\leq z} v_j^2(x)\right)+R'(z),
\end{equation}
where
\begin{equation*}
R'(z)=2z(|\omega_{h(z)}|-h(z)|\partial\Omega|)\left(\max_{x\in\bar\omega_h}\sum_{\mu_j\leq z} v_j^2(x)\right).
\end{equation*}
We have $h=h(z)=\frac{\pi}{2\sqrt{z}}>\frac{1}{2\sqrt{z}}$. Hence

\begin{multline}\label{split_z}
\max_{x\in\bar\omega_{h(z)}}\sum_{\mu_j\leq z} v_j^2(x)\\
=\max\left\{\max_{\left\{x\in\Omega:\delta(x)\leq\frac{1}{2\sqrt{z}}\right\}}\sum_{\mu_j\leq z} v_j^2(x),\max_{\left\{x\in\Omega:\frac{1}{2\sqrt{z}}\leq\delta(x)\leq h(z)\right\}}\sum_{\mu_j\leq z} v_j^2(x)\right\}.
\end{multline}
By hypothesis,
\begin{equation}\label{bd_z}
z\geq \max\left\{\bar h^{-2},\frac{4}{9}\max_{\substack{x\in\partial\Omega\\0\leq h\leq\bar h/2}}\left|\sum_{i=1}^{d-1}\frac{h\kappa_i(x)}{1-h\kappa_i(x)}\right|^{2}\right\},
\end{equation}
so \eqref{lower_mu} holds with $\mu=z$, $\alpha=2$, $\beta=1/2$ . Therefore, from \eqref{xu_sogge} and \eqref{ineq_spec_func} we obtain

\begin{equation*}
\max_{\left\{x\in\Omega:\delta(x)\leq \frac{1}{2\sqrt{z}}\right\}}\sum_{\mu_j\leq z}v_j(x)^2\leq 4 \max_{\left\{x\in\Omega:\delta(x)= \frac{1}{2\sqrt{z}}\right\}}\sum_{\mu_j\leq z}v_j(x)^2
\leq c_{d}z^{d/2},
\end{equation*}
where
\small
\begin{equation*}
c_{d}\\
=4\omega_d\left((2\pi)^{-d}+d(d+2)\sqrt[d+1]{3}\pi^{-1-d}\left(\frac{1}{2}+\sqrt[d+1]{3}(d+2)\right)^{d-1}\left(2(d+2)\sqrt[d+1]{3}+\pi\right)\right)
\end{equation*}
\normalsize
is a constant depending only on the dimension. Again from \eqref{ineq_spec_func} we deduce that
\begin{equation*}
\max_{\left\{x\in\Omega: \frac{1}{2\sqrt{z}}\leq\delta(x)\leq h(z)\right\}}\sum_{\mu_j\leq z}v_j(x)^2\leq\frac{c_{d}}{4}z^{d/2},
\end{equation*}
and then from \eqref{split_z} we conclude that
\begin{equation}\label{est_sp_fcn_z}
\max_{x\in\omega_{h(z)}}\sum_{\mu_j\leq z} v_j^2(x)\leq c_dz^{d/2}.
\end{equation}
Plugging \eqref{est_sp_fcn_z} into \eqref{riesz-neu-3}, we obtain
\begin{equation}\label{riesz-neu-4}
\sum_{j}(z-\mu_j)_+\leq \sum_{j}(z-\lambda_j)_+ +\pi|\partial\Omega|c_dz^{\frac{d}{2}+\frac{1}{2}}+R'(z)
\end{equation}
with
$$
|R'(z)|\leq 2z^{1+\frac{d}{2}}c_d||\omega_{h(z)}|-h(z)|\partial\Omega||.
$$
and
$$
\lim_{z\rightarrow+\infty}\frac{|R'(z)|}{z^{d/2}}\leq\frac{\pi^2(d-1)c_d}{4}\left|\int_{\partial\Omega}\mathcal H(x)d\sigma(x)\right|.
$$
Using the sharp semiclassical estimate for Dirichlet Riesz means
\begin{equation*}
\sum_{j}(z-\lambda_j)_+\leq\frac{2}{d+2}C_d^{-\frac{d}{2}}|\Omega|z^{1+\frac{d}{2}}
\end{equation*}
(see \cite{Berezin}), inequality \eqref{riesz-neu-4} assumes the more explicit form \eqref{riesz-neu-5} for all $z$ satisfying \eqref{bd_z}. This concludes the proof of the bound on Riesz means.

A lower bound for sums is obtained by Legendre transforming \eqref{riesz-neu-5}. In fact, for each $k\in\mathbb N$, $k\geq 1$,
\begin{multline}\label{leg}
\sup_{z\geq z_0}\left(kz-\sum_j(z-\mu_j)_+\right)\\
\geq\sup_{z\geq z_0}\left(kz-\frac{d}{d+2}C_d^{-\frac{d}{2}}|\Omega|z^{1+\frac{d}{2}}-\pi|\partial\Omega|c_dz^{\frac{d}{2}+\frac{1}{2}}-R'(z)\right)
\end{multline}
where $z_0$ is given by the right side of \eqref{bd_z}. We note then that from \eqref{thm_safarov_spectral_function} and \eqref{est_sp_fcn_z} it follows that
$$
\sum_{\mu_j\leq z_0}\int_{\Omega}v_j(x)^2dx\leq |\Omega|c_dz_0^{d/2},
$$
and hence the number of eigenvalues smaller than $z_0$ is bounded above by\\ $|\Omega|c_dz_0^{d/2}$. This implies that if $k\geq |\Omega|c_dz_0^{d/2}$ (which is the hypothesis for \eqref{bounds_neumann_eigenvalues}), then $\mu_k>z_0$, and therefore the left side of \eqref{leg} is nothing but $\sum_{j=1}^k\mu_j$.

Now the right side of \eqref{leg} can be bounded from below by choosing an admissible $z\geq z_0$. We note that $c_d>C_d^{-d/2}$ hence any $k\geq c_d|\Omega|z_0^{d/2}$ satisfies $k\geq C_d^{-d/2}|\Omega|z_0^{d/2}$. This implies that $z=z(k)=C_d\left(\frac{k}{|\Omega|}\right)^{2/d}$ satisfies \eqref{bd_z}, that is, $z\geq z_0$. By setting $z=z(k)=C_d\left(\frac{k}{|\Omega|}\right)^{2/d}$ in the right side of \eqref{leg} we obtain
\begin{equation*}
\frac{1}{k}\sum_{j=1}^k\mu_j\geq\frac{d}{d+2}C_d\left(\frac{k}{|\Omega|}\right)^{\frac{2}{d}}-\pi c_dC_d^{\frac{d+1}{2}}\frac{|\partial\Omega|}{|\Omega|}\left(\frac{k}{|\Omega|}\right)^{\frac{1}{d}}-R(k),
\end{equation*}
where $R(k)=R'(z(k))/k$. We note that $|R'(k)|\leq C$ for some positive constant depending only on $\Omega$, and moreover
$$
\lim_{k\rightarrow+\infty}|R(k)|\leq\frac{\pi^2(d-1)c_dC_d^{\frac{d}{2}}}{4|\Omega|}\left|\int_{\partial\Omega}\mathcal H(x)d\sigma(x)\right|.
$$
This concludes the proof.
\end{proof}




\appendix
\section{Final remarks}\label{sec:4}

In this appendix we provide two final remarks.

In Appendix \ref{sub:4:1} we show that inequality \eqref{bounds_neumann_eigenvalues} can be also proved through a generalization of the Berezin-Li-Yau method \cite{Berezin,LiYau} to the Neumann problem (and more
generally to any Laplace eigenvalues).
In Appendix \ref{sub:4:2} we show how to obtain asymptotically Weyl-sharp upper and lower bounds for single Dirichlet and Neumann eigenvalues from two-sided asymptotically sharp bounds on averages. In view of this, we also recall an equivalent formulation of P\'olya's conjecture.




\subsection{The Berezin-Li-Yau method for Laplacian eigenfunctions}\label{sub:4:1}

An alternative way to obtain a bound of the form \eqref{bounds_neumann_eigenvalues} is through a generalization which we present here, of the method introduced by Berezin \cite{Berezin} and independently by Li and Yau \cite{LiYau} using the Fourier transform of eigenfunctions for the Dirichlet Laplacian. We extend this method to any Laplacian eigenfunction.

First, we recall that for any $f\in H^1_0(\Omega)$ the following facts hold:
\begin{equation}\label{facts}
  \int_{\mathbb{R}^d}|\hat{f}(\xi)|^2\,d\xi=\int_{\Omega}|f(x)|^2\,dx,\quad  \int_{\mathbb{R}^d}|\xi|^2|\hat{f}(\xi)|^2\,d\xi=\int_{\Omega}|\nabla f(x)|^2\,dx.
\end{equation}
(Here we denote still with $f$ the extension by zero of $f$ to $\mathbb R^d$).

Moreover, suppose that $\left\{f_j\right\}_{j=1}^{\infty}$ is an orthonormal basis of $L^2(\Omega)$ and let $\phi \in L^2(\Omega)$. Then
\begin{equation}\label{Parseval-id-phi-u}
  \sum_{j=0}^{\infty}|\widehat{\phi f_j}(\xi)|^2=(2\pi)^{-d}\int_{\Omega}|\phi(x)|^2\,dx.
\end{equation}

We are ready to state the following theorem.
\begin{theorem}\label{thm_neumann_preliminary}
  Suppose that $\left\{f_j\right\}_{j=1}^{\infty}$ is an orthonormal basis of $L^2(\Omega)$ and let $\phi \in L^2(\Omega)$ such that $\phi f_j \in H^{1}_{0}(\Omega)$ for all $j\in\mathbb N$. Then for all $k\in \mathbb{N}$ and all $R>0$ the following inequality holds:
\begin{equation}\label{BLY-ineq-all-R}
  \sum_{j=1}^{k}\int_{\Omega}|\nabla \phi f_j|^2\,dx\geq -\,\frac{2}{d+2}\,C_d^{-d/2}||\phi||^2_2R^{d+2}+R^2 \sum_{j=0}^{k}\int_{\Omega}| \phi f_j|^2\,dx.
\end{equation}
In particular,
\begin{equation}\label{BLY-ineq-optimal-R}
  \frac{d+2}{d}\sum_{j=0}^{k}\int_{\Omega}|\nabla \phi f_j|^2\,dx\geq C_d||\phi||_2^{-4/d}\bigg(\sum_{j=0}^{k}\int_{\Omega}| \phi f_j|^2\,dx\bigg)^{1+\frac{2}{d}}.
\end{equation}
\begin{proof}
Thanks to \eqref{facts} and \eqref{Parseval-id-phi-u}, for all $R>0$ we have
\begin{multline*}
\sum_{j=1}^{k}\int_{\Omega}|\nabla\phi f_j|^2dx=\sum_{j=1}^{k}\int_{\mathbb R^d}|\xi|^2|\widehat{\phi f_j}|^2d\xi\\
=\sum_{j=1}^{k}\int_{\mathbb R^d}(|\xi|^2-R^2)|\widehat{\phi f_j}|^2d\xi+R^2\sum_{j=1}^{k}\int_{\mathbb R^d}|\widehat{\phi f_j}|^2d\xi\\
\geq\sum_{j=1}^{k}\int_{|\xi|\leq R}(|\xi|^2-R^2)|\widehat{\phi f_j}|^2d\xi+R^2\sum_{j=1}^{k}\int_{\Omega}|\phi f_j|^2dx\\
\geq\int_{|\xi|\leq R}(|\xi|^2-R^2)\sum_{j=1}^{\infty}|\widehat{\phi f_j}|^2d\xi+R^2\sum_{j=1}^{k}\int_{\Omega}|\phi f_j|^2dx\\
=\int_{|\xi|\leq R}(|\xi|^2-R^2)(2\pi)^{-d}\|\phi\|_2^2d\xi+R^2\sum_{j=1}^{k}\int_{\Omega}|\phi f_j|^2dx\\
=-\frac{2}{d+2}C_d^{-d/2}\|\phi\|_2^2R^{d+2}+R^2\sum_{j=1}^k\int_{\Omega}|\phi f_j|^2dx.
\end{multline*}
This proves \eqref{BLY-ineq-all-R}. Now, the expression in the right side of \eqref{BLY-ineq-all-R} is optimized with respect to $R$ when
\begin{equation}\label{optimR}
R=\left(C_d\frac{\sum_{j=1}^{k}\int_{\Omega}|\phi f_j|^2dx}{\|\phi\|_2^2}\right)^{\frac{1}{2}}.
\end{equation}
Using \eqref{optimR} in \eqref{BLY-ineq-all-R} yields \eqref{BLY-ineq-optimal-R}. This concludes the proof.
\end{proof}
\end{theorem}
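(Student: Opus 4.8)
The plan is to mimic the Berezin--Li--Yau argument, with the Dirichlet eigenfunctions replaced by the functions $\phi f_j$ and with the two Fourier identities \eqref{facts} and \eqref{Parseval-id-phi-u} playing the role of the Plancherel relation and of the completeness relation, respectively.

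First I would pass to the Fourier side: by \eqref{facts}, $\sum_{j=1}^k\int_\Omega|\nabla(\phi f_j)|^2\,dx=\sum_{j=1}^k\int_{\mathbb R^d}|\xi|^2|\widehat{\phi f_j}(\xi)|^2\,d\xi$. Fix $R>0$ and insert $|\xi|^2=(|\xi|^2-R^2)+R^2$, splitting the right-hand side into a low-frequency part $\sum_{j=1}^k\int_{\mathbb R^d}(|\xi|^2-R^2)|\widehat{\phi f_j}|^2\,d\xi$ and a remainder $R^2\sum_{j=1}^k\int_{\mathbb R^d}|\widehat{\phi f_j}|^2\,d\xi$. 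The remainder is handled immediately by the first identity in \eqref{facts}: it equals $R^2\sum_{j=1}^k\int_\Omega|\phi f_j|^2\,dx$, which is exactly the positive term on the right of \eqref{BLY-ineq-all-R}.

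The crux is the low-frequency part, and it is treated by the standard truncation trick. Since the weight $|\xi|^2-R^2$ is nonnegative for $|\xi|>R$, restricting the integration to the ball $B_R=\{|\xi|\le R\}$ only decreases the sum; and since on $B_R$ that weight is nonpositive, enlarging the summation index set from $\{1,\dots,k\}$ to all $j$ decreases it further, so that $\sum_{j=1}^k\int_{\mathbb R^d}(|\xi|^2-R^2)|\widehat{\phi f_j}|^2\,d\xi\ge\int_{B_R}(|\xi|^2-R^2)\sum_{j}|\widehat{\phi f_j}(\xi)|^2\,d\xi$. Now \eqref{Parseval-id-phi-u} identifies the inner sum with the constant $(2\pi)^{-d}\|\phi\|_2^2$, reducing the matter to the elementary radial integral $\int_{B_R}(|\xi|^2-R^2)\,d\xi=\big(\tfrac{d}{d+2}-1\big)\omega_dR^{d+2}=-\tfrac{2}{d+2}\omega_dR^{d+2}$; together with $(2\pi)^{-d}\omega_d=C_d^{-d/2}$ (see \eqref{Cd}) this produces the negative term $-\tfrac{2}{d+2}C_d^{-d/2}\|\phi\|_2^2R^{d+2}$, and adding the two pieces gives \eqref{BLY-ineq-all-R}.

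Finally, \eqref{BLY-ineq-optimal-R} follows by optimizing the right-hand side of \eqref{BLY-ineq-all-R} over $R>0$: writing $S:=\sum_{j}\int_\Omega|\phi f_j|^2\,dx$ and differentiating $R\mapsto-\tfrac{2}{d+2}C_d^{-d/2}\|\phi\|_2^2R^{d+2}+SR^2$, the maximizer satisfies $R^d=C_d^{d/2}S/\|\phi\|_2^2$, i.e. $R^2=C_d\,(S/\|\phi\|_2^2)^{2/d}$; substituting this value back and collecting the constants yields exactly \eqref{BLY-ineq-optimal-R}. I do not expect a genuine obstacle here: the argument is essentially the classical one, and the only points that require a little care are justifying the Fourier-side manipulations (covered by the hypothesis $\phi f_j\in H^1_0(\Omega)$, with monotone convergence used to pass to the infinite sum in \eqref{Parseval-id-phi-u}) and the bookkeeping of the dimensional constant $C_d$; the same computation also underlies the alternative derivation of \eqref{bounds_neumann_eigenvalues} announced in this appendix.
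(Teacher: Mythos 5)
Your proof is correct and is essentially the same argument as in the paper: pass to Fourier side via \eqref{facts}, split $|\xi|^2=(|\xi|^2-R^2)+R^2$, truncate to $|\xi|\le R$ and extend the sum to all $j$ to invoke \eqref{Parseval-id-phi-u}, compute the radial integral, and then optimize over $R$. One small remark: your optimal value $R^2=C_d\bigl(S/\|\phi\|_2^2\bigr)^{2/d}$ (with $S=\sum_{j\le k}\int_\Omega|\phi f_j|^2\,dx$) is the right one; the exponent $\tfrac12$ appearing in the paper's displayed \eqref{optimR} is a misprint, since plugging it back does not reproduce \eqref{BLY-ineq-optimal-R} unless $d=2$.
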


Theorem \ref{thm_neumann_preliminary} can be applied to lower bound the eigenvalues $\left\{\mu_j\right\}_{j=1}^{\infty}$ of $-\Delta_{\Omega}^N$ and obtain an inequality of the form \eqref{bounds_neumann_eigenvalues}, replacing the constant $c_d$ as necessary
by another constant depending only on the dimension. 

In fact, let us apply Theorem \ref{thm_neumann_preliminary} with $\left\{f_j\right\}_{j=1}^{\infty}=\left\{v_j\right\}_{j=1}^{\infty}$ (the Neumann eigenfunctions) as an orthonormal basis of $L^2(\Omega)$ and with $\phi=\phi_h$ defined by \eqref{phi-h-test-fct} with $f(p)=p$ (we choose $h<\bar h$, where $\bar h$ is defined by \eqref{barh}). Since $\Omega$ is of class $C^2$, $v_j\in H^2(\Omega)$ and $-\Delta v_j=\mu_j v_j$ is in $L^2(\Omega)$ for all $j\in\mathbb N$.
Hence \eqref{grad-phi-u-identity} holds with $u=v_j$. Therefore \eqref{grad-phi-u-identity} implies that
\begin{equation*}
\sum_{j=1}^{k}\int_{\Omega}|\nabla \phi_h v_j|^2=\sum_{j=1}^k\int_{\Omega}|\nabla\phi_h|^2 v_j^2 dx+\sum_{j=1}^k\mu_j\int_{\Omega}\phi_h^2 v_j^2dx.
\end{equation*}
From \eqref{BLY-ineq-optimal-R} it follows that
\begin{multline}\label{step1-final-proof}
\frac{d+2}{d}\sum_{j=1}^k\mu_j\int_{\Omega}\phi_h^2v_j^2dx\\
\geq C_d\|\phi_h\|_2^{-4/d}\left(\sum_{j=1}^k\int_{\Omega}\phi_h^2 v_j^2 dx\right)^{1+\frac{2}{d}}-\frac{d+2}{d}\sum_{j=1}^k\int_{\Omega}|\nabla \phi_h|^2 v_j^2 dx.
\end{multline}
Now, for the left side of \eqref{step1-final-proof}, from the fact that $\|\phi_h\|_{\infty}\leq 1$ and the normalization of $v_j$ we obtain
\begin{equation}\label{step2-final-proof}
\frac{d+2}{d}\sum_{j=1}^k\mu_j\int_{\Omega}\phi_h^2v_j^2dx\leq\frac{d+2}{d}\sum_{j=1}^k\mu_j.
\end{equation}
We next observe that from $\|\phi_h\|_{\infty}\leq 1$ it follows that $\|\phi_h\|_2^{4/d}\leq |\Omega|^{2/d}$. Hence, for the first term in the right side of \eqref{step1-final-proof}, we have
\begin{multline}\label{step3-final-proof}
C_d\|\phi_h\|_2^{-4/d}\left(\sum_{j=1}^k\int_{\Omega}\phi_h^2 v_j^2 dx\right)^{1+\frac{2}{d}}\\
\geq \frac{C_d}{|\Omega|^{2/d}}\left(\sum_{j=1}^k\int_{\Omega}v_j^2 dx-\sum_{j=1}^k\int_{\Omega}(1-\phi_h^2) v_j^2 dx\right)^{1+\frac{2}{d}}\\
\geq C_d\frac{k^{1+\frac{d}{2}}}{|\Omega|^{2/d}}\left(1-\frac{d+2}{d}\frac{1}{k}\sum_{j=1}^k\int_{\Omega}(1-\phi_h^2) v_j^2 dx\right)\\
=C_d\frac{k^{1+\frac{d}{2}}}{|\Omega|^{2/d}}-C_d\frac{d+2}{d}\frac{k^{\frac{d}{2}}}{|\Omega|^{2/d}}\sum_{j=1}^k\int_{\Omega}(1-\phi_h^2) v_j^2 dx.
\end{multline}
From \eqref{step2-final-proof} and \eqref{step3-final-proof} we obtain the following inequality
\begin{equation*}
\frac{1}{k}\sum_{j=1}^k\mu_j\\
\geq\frac{d}{d+2}C_d\left(\frac{k}{|\Omega|}\right)^{2/d}-\int_{\Omega}\left(C_d\left(\frac{k}{|\Omega|}\right)^{2/d}(1-\phi_h^2)+|\nabla\phi_h|^2\right)\frac{1}{k}\sum_{j=1}^k v_j^2dx.
\end{equation*}
Since $|1-\phi_h|\leq 1$, $|\nabla\phi_h|=1/h$ and $1-\phi_h$ and $\nabla\phi_h$ are supported on 
$\omega_h$, that for all $0<h<\bar h$, this inequality implies that
\begin{multline}\label{step00-finl-proof}
\frac{1}{k}\sum_{j=1}^k\mu_j
\geq\frac{d}{d+2}C_d\left(\frac{k}{|\Omega|}\right)^{2/d}\\
-\left(C_d\left(\frac{k}{|\Omega|}\right)^{2/d}+\frac{1}{h^2}\right)h|\partial\Omega|\left(\max_{x\in\omega_h}\frac{1}{k}\sum_{j=1}^k v_j^2(x)\right)-R_k(h),
\end{multline}
where
\begin{equation}\label{remainder_N}
R_k(h)=\left(C_d\left(\frac{k}{|\Omega|}\right)^{2/d}+\frac{1}{h^2}\right)(|\omega_h|-h|\partial\Omega|)\left(\max_{x\in\omega_h}\frac{1}{k}\sum_{j=1}^k v_j^2(x)\right).
\end{equation}
To optimize the second summand in the last line of \eqref{step00-finl-proof} with respect to $h$ we take $h$ in \eqref{step00-finl-proof} and \eqref{remainder_N} as
$$
h=h(k):=C_d^{-1/2}\left(\frac{k}{|\Omega|}\right)^{-1/d}.
$$
Assuming that
\begin{equation*}
k\geq \frac{2}{d+2}C_d^{-d/2}|\Omega|\max\left\{\bar h^{-d},(2/3)^{d}\max_{\substack{x\in\partial\Omega\\0\leq h\leq\bar h/2}}\left|\sum_{i=1}^{d-1}\frac{h\kappa_i(x)}{1-h\kappa_i(x)}\right|^{d}\right\},
\end{equation*}
we immediately verify that $0<h(k)<\bar h$ and moreover, by the same arguments used in the proof of Theorem \ref{thm_bounds_neumann_eigenvalues}, that
\begin{equation}\label{est_sp_fcn}
\max_{x\in\omega_h(k)}\sum_{j=1}^k v_j^2(x)\leq c_d'\frac{k}{|\Omega|},
\end{equation}
where $c_d'$ is a constant depending only on the dimension.
We plug \eqref{est_sp_fcn} into \eqref{step00-finl-proof} and obtain
\begin{equation*}
\frac{1}{k}\sum_{j=1}^k\mu_j\geq\frac{d}{d+2}C_d\left(\frac{k}{|\Omega|}\right)^{2/d}-2c_d'C_d^{1/2}\frac{|\partial\Omega|}{|\Omega|}\left(\frac{k}{|\Omega|}\right)^{1/d}-R(k),
\end{equation*}
where
$$
R(k):=R_k(h(k))
$$
and
$$
|R(k)|\leq 2c_d'C_d\left(\frac{k}{|\Omega|}\right)^{\frac{2}{d}}\frac{||\omega_{h(k)}|-h(k)|\partial\Omega||}{|\Omega|}.
$$
Exactly as in the proof of (i) in Theorem \ref{smooth_1} we observe that there exists a constant $C$ depending only on $\Omega$ such that $|R(k)|\leq C$, and  that
\begin{equation*}
\lim_{k\rightarrow+\infty}R(k)=\frac{(d-1)c_d'}{16|\Omega|}\left|\int_{\partial\Omega}\mathcal H(x)d\sigma(x)\right|.
\end{equation*}




\subsection{Asymptotically Weyl-sharp bounds on eigenvalues}\label{sub:4:2}
We conclude the paper with a general remark. Assume that $\Omega$ is such that
\begin{equation}\label{dirichlet_ineq_1}
\frac{1}{k}\sum_{j=1}^k\lambda_j\geq\frac{d}{d+2}C_d\left(\frac{k}{|\Omega|}\right)^{\frac{2}{d}}
\end{equation}
and
\begin{equation}\label{dirichlet_ineq_2}
\frac{1}{k}\sum_{j=1}^k\lambda_j\leq\frac{d}{d+2}C_d\left(\frac{k}{|\Omega|}\right)^{\frac{2}{d}}+A \left(\frac{k}{|\Omega|}\right)^{\frac{1}{d}}+B 
\end{equation}
for some constants $A,B$ independent of $k$, for all $k\geq k_0$. Then, for all  $k\geq k_0$
\begin{multline}\label{dirichlet_ineq_1_2}
\lambda_k\geq C_d\left(\frac{k}{|\Omega|}\right)^{\frac{2}{d}}\\
-\left(\frac{3}{d+2}C_d|\Omega|^{-\frac{2}{d}}+2A|\Omega|^{-\frac{1}{d}}\right)k^{\frac{3}{2d}}\\
+\frac{d+1}{d}A\left(\frac{k}{|\Omega|}\right)^{\frac{1}{d}}-\frac{3A}{2d}|\Omega|^{-\frac{1}{d}}k^{\frac{1}{2d}}-B,
\end{multline}
and
\begin{multline}\label{dirichlet_ineq_2_2}
\lambda_{k+1}\leq C_d\left(\frac{k}{|\Omega|}\right)^{\frac{2}{d}}\\
+\left(\frac{3}{d+2}C_d|\Omega|^{-\frac{2}{d}}+2A|\Omega|^{-\frac{1}{d}}\right)k^{\frac{3}{2d}}\\
+\frac{d+1}{d}A\left(\frac{k}{|\Omega|}\right)^{\frac{1}{d}}+\left(\frac{3A}{2d}|\Omega|^{-\frac{1}{d}}+2B\right)k^{\frac{1}{2d}}+B.
\end{multline}
In particular, for all $k\geq k_0$,
\begin{multline}\label{modulus_dirichlet}
\left|\lambda_k-C_d\left(\frac{k}{|\Omega|}\right)^{\frac{2}{d}}-\frac{d+1}{d}A\left(\frac{k}{|\Omega|}\right)^{\frac{1}{d}}\right|\\
\leq\left(\frac{3}{d+2}C_d|\Omega|^{-\frac{2}{d}}+2A|\Omega|^{-\frac{1}{d}}\right)k^{\frac{3}{2d}}+\left(\frac{3A}{2d}|\Omega|^{-\frac{1}{d}}+2B\right)k^{\frac{1}{2d}}+B.
\end{multline}
Inequalities \eqref{dirichlet_ineq_1_2} and \eqref{dirichlet_ineq_2_2} follow from \eqref{dirichlet_ineq_1} and \eqref{dirichlet_ineq_2} by observing that
$$
\lambda_k\geq\frac{1}{l}\sum_{j=k-l+1}^k\lambda_j
$$
and
$$
\lambda_{k+1}\leq\frac{1}{l}\sum_{j=k+1}^{k+l}\lambda_j,
$$
and by choosing $l\in\mathbb N$ such that
$$
l=k^{1-\frac{1}{2d}}+b
$$
with $b\in\left[-\frac{1}{2},\frac{1}{2}\right]$. In particular, with this choice, $\dfrac{1}{2}k^{1-\dfrac{1}{2d}}\leq l\leq \dfrac{3}{2}k^{1-\dfrac{1}{2d}}$ and $k-1\leq l\leq k+1$.

We note that the remainder estimate in \eqref{modulus_dirichlet} is not good since the power $k^{\frac{3}{2d}}$ is bigger than $k^{\frac{1}{d}}$. An analogous result holds if we replace Dirichlet eigenvalues $\lambda_j$ by Neumann eigenvalues $\mu_j$ (and reversing inequalities in \eqref{dirichlet_ineq_1},\eqref{dirichlet_ineq_2},\eqref{dirichlet_ineq_1_2} and \eqref{dirichlet_ineq_2_2}).

We observe that in this paper we have established inequalities of the form \eqref{dirichlet_ineq_2} for Dirichlet and Neumann eigenvalues (with ``$\leq$'' replaced by ``$\geq$'' in the case of Neumann eigenvalues) for different classes of domains in $\mathbb R^d$. Importantly, our estimates, and hence $A,B$ and $k_0$ in \eqref{modulus_dirichlet}, depend explicitly on the geometry of the domain, in particular on the (Hausdorff) measure of the boundary and the volume of the tubular neighbourhood about the boundary. For domains of class $C^2$ or convex domains the dependence is much more explicit. In view of this, we recall that P\'olya's conjecture
$$
\mu_{k+1}\leq C_d\left(\frac{k}{|\Omega|}\right)^{\frac{2}{d}}\leq\lambda_k
$$
is equivalent to
$$
\lim_{k\rightarrow+\infty}\frac{\lambda_k^*}{k^{\frac{2}{d}}}=\lim_{k\rightarrow+\infty}\frac{\mu_k^*}{k^{\frac{2}{d}}}=C_d,
$$
where
$$
\lambda_k^*=\inf\left\{\lambda_k:|\Omega|=1\right\}
$$
and
$$
\mu_k^*=\sup\left\{\mu_k:|\Omega|=1\right\}.
$$
We refer to \cite[Corollary 2.2]{colbois2014} for the proof of the equivalence. It is now clear that having inequalities of the form \eqref{modulus_dirichlet} with explicit geometric constants for quite broad classes of domains could prove useful in view of P\'olya's conjecture if  information on the geometry of optimizers of $\lambda_k$ and $\mu_k$ for large $k$ would be available (for example, $C^2$ smoothness, uniform boundedness of the surface measure and of the integral of the mean curvature).  This is, however, a notoriously
hard problem. For a few results in this direction see \cite{bucur2012,bucur2015}. We also refer to \cite{Antunes2012} and references therein for numerical optimization of low Dirichlet eigenvalues.

\section*{Acknowledgements}
The first and second authors are grateful to the \'Ecole Polytechnique
F\'ed\'erale de Lausanne for hospitality that supported this collaboration. The second author is also grateful to the \'Ecole Polytechnique
F\'ed\'erale de Lausanne since part of the work was done while he was post-doc there. The second author is member of the Gruppo Nazionale per l'Analisi Matematica, la
Probabilit\`a e le loro Applicazioni (GNAMPA) of the Istituto Nazionale di Alta Matematica (INdAM).





\end{document}